\newcommand {\N}{\mathbb{N}}
\newcommand {\p}{\mathfrak{p}}
\newcommand{\bgln}{\begin{eqnarray}} 
\newcommand{\egln}{\end{eqnarray}}
\newcommand{\bgl}{\begin{equation}} 
\newcommand{\egl}{\end{equation}}
\newtheorem{teorema}{theorem}[section]
\newtheorem{lemma}[teorema]{Lemma}
\newtheorem{corollary}[teorema]{Corollary}
\newtheorem{definition}[teorema]{Definition}
\newtheorem{proposition}[teorema]{Proposition}
\newtheorem{example}[teorema]{Example}
\theoremstyle{remark}
\theoremstyle{definition}
\newtheorem{remark}[teorema]{Remark}
\newtheorem{theorem}[teorema]{Theorem}
\begin{document}

\author{Daniel Gon\c{c}alves\footnote{Partially supported by Capes-PrInt and Conselho Nacional de Desenvolvimento Cient\'ifico e Tecnol\'ogico (CNPq) and Capes-PrInt - Brazil.}\hspace{0.3pc} and Bruno B. Uggioni}

\title{Shadowing for local homeomorphisms, with applications to edge shift spaces of infinite graphs}
\maketitle

\begin{abstract} 

In this paper, we develop the basic theory of the shadowing property for local homeomorphisms of metric locally compact spaces, with a focus on applications to edge shift spaces connected with C*-algebra theory. For the local homeomorphism (the Deaconu-Renault system) associated with a direct graph, we completely characterize the shadowing property in terms of conditions on sets of paths. Using these results, we single out classes of graphs for which the associated system presents the shadowing property, fully characterize the shadowing property for systems associated with certain graphs, and show that the system associated with the rose of infinite petals presents the shadowing property and that the Renewal shift system does not present the shadowing property.

\vspace{5mm}

\thanks{\noindent 2020 \textit{Mathematics Subject Classification.}  Primary: 37B65. Secondary: 37A55, 05E18.}
\newline
\textit{Keywords}: Shadowing, local homeomorphisms, Deaconu-Renault systems, infinite graphs, edge shift spaces.

\vspace{5mm}

\end{abstract}

\section{Introduction}

There is no standard approach to shift spaces over infinite alphabets. In the case of countable state Markov shifts, i.e. shift spaces associated with an infinite matrix, the more common approach is to use a direct generalization of the finite alphabet case, that is, to consider the shift space as the product space of the alphabet with the pro-discrete topology, see \cite{Tullio, Kitchens, Sarig, Sarig1} for example. One of the difficulties with this approach is that the shift space obtained in this way is not even locally compact. Motivated by this, various approaches to shift spaces have appeared, see \cite{OTW} 
 for an overview, and recently researchers have looked into shift spaces arising in connection with C*-algebra theory. In the latter context, the shift spaces also include finite sequences and the topology differs from the product topology, see \cite{subshift, EL, GRultrapartial, GTasca, OTW} for details. Further to the connection with C*-algebra theory, the interest in these shift spaces arises as researchers realize that new dynamical phenomena present themselves in the part of the space formed by the finite sequences (see for example \cite{BisEx}, where thermodynamic formalism of the aforementioned generalized countable Markov shifts is studied) or in relation to the different topology (see for example \cite{GR44, GSo, GSoStar}). 

Our initial intention, when we began developing this paper, was to study the shadowing property of edge shift spaces of graphs that arise in connection with C*-algebra theory and verify if new phenomena also emerge in this context. While searching the literature for infinite alphabet shift spaces associated with C*-algebras, we observed that all approaches (see \cite{subshift, EL, GRultrapartial, GTasca, OTW}) incorporate a local homeomorphism defined in the space, known as a Deaconu-Renault system. This local homeomorphism is either considered as the shift or its domain is a dense subset of the shift space. Recently, the dynamical properties of Deaconu-Renault systems have garnered significant attention (see, for example, \cite{ABCE, CRST, GRT}). Given this, we decided to develop the fundamental theory of shadowing in the context of metric Deaconu-Renault systems and apply it to infinite graphs. In comparison to the conventional product topology approach, we discovered a broader range of phenomena (further details provided below), which arise not from the existence of finite sequences in the shift space but due to the employed metric/topology.

Shadowing is a well-established concept in dynamical systems. It has been shown to be a relatively common property in the space of diverse dynamical systems (see \cite{medd} and \cite{opro}) and it is associated with the stability of such systems (\cite{walters1}) and to other dynamical and metric properties. Considering shifts over finite alphabets, the classical result of Walters \cite{walters1} says that a shift has the shadowing property if, and only if, it is a shift of finite type. In the infinite alphabet case, with the product topology, a similar result holds, i.e. a shift has the shadowing property if and only if it is a shift of finite order (a generalization of a shift of finite type), see \cite{DGM, MR}. In the context of the Deaconu-Renault system associated with a graph, we provide an example of a graph with only two vertices in which the associated system does not possess the shadowing property. This raises an immediate question: finding a description of graphs for which the associated system exhibits the shadowing property. In our work, we present a general characterization of the shadowing property and utilize it to describe classes of graphs with and without the shadowing property.

Returning to the general setting of a local homeomorphism in a metrically locally compact set, it is worth noting that the complete theory of shadowing needs to be developed. For instance, in \cite{Pilyugin}, it is proven that finite shadowing and infinite shadowing are equivalent for compact spaces. However, this equivalence does not hold in the general theory of shadowing for non-compact spaces (see \cite{DGM}), nor does it hold for Deaconu-Renault systems. Additionally, other properties of the shadowing property need to be proven or verified. In this paper, we will focus on the ones required to describe the shadowing of the Deaconu-Renault system associated with an infinite graph. Specifically, the paper is organized as follows:

In Section~\ref{Slh} we recall the definition of a Deaconu-Renault system, define the (finite) shadowing property for such systems, study the relationship between the shadowing property and the finite shadowing property, show that the (finite) shadowing property is preserved under uniformly equivalent metrics, and show that the shadowing property may be studied via a restriction of the system to a dense subset. In Section~\ref{frentefria}, we recall the metric Deaconu-Renault system associated with a graph and show properties of the metric that will be necessary for our work. Since the metric is non-trivial, in Section~\ref{capivara} we characterize the finite shadowing property and the shadowing property in terms of conditions on sets of paths, which we call the Finite Path Condition (see Theorem~\ref{teofiniteshadowing}) and the First and Second Infinite Path Conditions (see Theorem~\ref{teoinfiniteshadowing}), respectively. Finally, in Section~\ref{lagarta}, we use the results of Section~\ref{capivara} to single out classes of graphs for which the associated Deaconu-Renault system presents the (finite) shadowing property. Among the examples, we show that the system associated with the rose with infinite petals, or with a wandering graph, has the shadowing property. For the class of graphs which contain what we call a finite attractor subgraph we completely characterize the shadowing property, see Proposition~\ref{propproAshadowing}, and using this result we show that the system associated with the Renewal shift does not have the shadowing property. The last two examples of the paper concern a graph with only two vertices such that the associated system does not present the finite shadowing property and a graph such that the associated system presents the finite shadowing property but not the shadowing property.

\section{Shadowing for local homeomorphisms}\label{Slh}

Throughout this paper, we consider the set of the natural numbers as the set $\{1,2, \ldots\}$. 

Deaconu-Renault systems, or singly generated dynamical systems, were introduced independently by Deaconu in \cite{D95} and by Renault in \cite{Re00}. In this subsection we recall some of its concepts and terminology, as introduced in \cite{ABCE}, which in turn follows \cite{CRST}. We also introduce the notion of shadowing for Deaconu-Renault systems and study some of its properties, such as the relations between the shadowing property and the finite shadowing property, the equivalence of shadowing via uniformly equivalent metrics, and the effect of restrictions on the shadowing property.

\begin{definition} \cite{ABCE} A \textit{Deaconu-Renault system} is a pair $(X,\sigma)$ consisting of a locally compact Hausdorff space $X$, and a local homeomorphism $\sigma: Dom(\sigma)\longrightarrow Im(\sigma)$ from an open set $Dom (\sigma)\subseteq X$ to an open set $Im(\sigma)\subseteq X$. Inductively define $Dom(\sigma^n):=\sigma^{-1}(Dom(\sigma^{n-1}))$, so each $\sigma^n:Dom(\sigma^n)\longrightarrow Im(\sigma^n)$ is a local homeomorphism and $\sigma^m\circ \sigma^n=\sigma^{m+n}$ on $Dom(\sigma^{m+n})$.
\end{definition}

The definition of the shadowing property can be readily adapted to the context of metric Deaconu-Renault systems. We do this below.

\begin{definition} Let $(X,\sigma)$ be a Deaconu-Renault system, with $X$ a metric space.  Given $\delta>0$, a (finite) \textit{$\delta-chain$} in $(X,\sigma)$ is a (finite) sequence $(x_n)$ such that $d(\sigma(x_n),x_{n+1})< \delta$ for all $n$, where it is implicity that $x_n\in Dom(\sigma)$ when $\sigma(x_n)$ is written. 

We say that a point $x\in X$ \textit{$\varepsilon-$shadows} a (finite) sequence $(y_i) $ in $X$ if $d(\sigma^i(x), y_i)<\varepsilon$ for all $i$. Notice that this implies that $x\in Dom(\sigma^i)$ for all $i$.
\end{definition}

\begin{remark} Notice that if $(x_i)_{i=1}^k$ is a finite $\delta$-chain then $x_k$ does not, necessarily, belongs to $Dom(\sigma)$.
\end{remark}

With the above setup, we define the shadowing property in the same manner as the standard definition, see below.

\begin{definition}
We say that a Deaconu-Renault system has the (finite) shadowing property if for every $\varepsilon>0$, there exists a $\delta>0$ such that every (finite) $\delta-$chain is $\varepsilon-$shadowed by a point $x\in X$.
\end{definition}

\begin{remark}
    We emphasize that when we do not specify the term ``finite" in front of ``shadowing property" we are referring to the shadowing property via infinite sequences. Moreover, when $Dom(\sigma)=X$, the above definitions coincide with the usual definitions of the shadowing property and the finite shadowing property. 
\end{remark}

Some of our follow-up results deal with Deaconu-Renault systems $(X,\sigma)$ such that $Dom(\sigma)$ is dense in $X$. This condition can be characterized as follows. 

\begin{proposition}
\label{niceone} Let $(X,\sigma)$ be a Deaconu-Renault system. Then, $Dom(\sigma)$ is dense in $X$ if, and only if,  for every $n\in \{0,1,2,\ldots\}$, $x\in Dom (\sigma^n)$, and $V$ an open neighborhood of $x$, there is a $y\in V\cap Dom(\sigma^{n+1})$. 
\end{proposition}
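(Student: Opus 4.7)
The plan is to prove the two implications separately, with the reverse direction being essentially immediate and the forward direction exploiting the local homeomorphism property of $\sigma^n$.

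For the $(\Longleftarrow)$ direction, I would simply specialize the hypothesis to $n=0$. Since $Dom(\sigma^{0}) = X$, given any $x \in X$ and any open neighborhood $V$ of $x$ the hypothesis yields $y \in V \cap Dom(\sigma)$, and this is exactly the statement that $Dom(\sigma)$ is dense in $X$.

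For the $(\Longrightarrow)$ direction, fix $n \ge 0$, $x \in Dom(\sigma^{n})$, and an open neighborhood $V$ of $x$. The key point is that each $\sigma^{n} : Dom(\sigma^{n}) \to Im(\sigma^{n})$ is a local homeomorphism between open subsets of $X$. Hence I can shrink $V$: choose an open neighborhood $U$ of $x$ with $U \subseteq V \cap Dom(\sigma^{n})$ on which $\sigma^{n}$ restricts to a homeomorphism onto an open set $\sigma^{n}(U) \subseteq X$ (openness of the image uses that local homeomorphisms are open maps and that $Im(\sigma^{n})$ is open in $X$). Because $\sigma^{n}(U)$ is a nonempty open subset of $X$ and $Dom(\sigma)$ is dense, pick $z \in \sigma^{n}(U) \cap Dom(\sigma)$ and set $y := (\sigma^{n}|_{U})^{-1}(z)$. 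Then $y \in U \subseteq V$, $y \in Dom(\sigma^{n})$, and $\sigma^{n}(y) = z \in Dom(\sigma)$.

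The only remaining thing to verify is that such a $y$ belongs to $Dom(\sigma^{n+1})$. This is a short inductive unfolding of the definition $Dom(\sigma^{n+1}) = \sigma^{-1}(Dom(\sigma^{n}))$, which yields the equivalent description
\[
Dom(\sigma^{n+1}) \;=\; \{\, w \in Dom(\sigma^{n}) : \sigma^{n}(w) \in Dom(\sigma)\,\},
\]
and under this description the chosen $y$ clearly lies in $V \cap Dom(\sigma^{n+1})$. I expect the only mild obstacle to be bookkeeping: confirming openness of $\sigma^{n}(U)$ in $X$ (so that density of $Dom(\sigma)$ can be applied) and justifying the inductive characterization of $Dom(\sigma^{n+1})$; both follow routinely from the hypotheses in the definition of a Deaconu--Renault system.
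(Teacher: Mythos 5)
Your proposal is correct and follows essentially the same route as the paper: shrink $V$ to a neighborhood $U$ on which $\sigma^n$ is a homeomorphism onto an open set, apply density of $Dom(\sigma)$ inside $\sigma^n(U)$, and pull the point back; the reverse implication is the $n=0$ case. Your explicit verification of the identity $Dom(\sigma^{n+1})=\{w\in Dom(\sigma^{n}):\sigma^{n}(w)\in Dom(\sigma)\}$ is a small addition the paper leaves implicit, but the argument is the same.
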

\begin{proof}

Suppose that $x\in Dom(\sigma^n)$ and let $V$ be an open set that contains $x$. Since $\sigma$ is a local homeomorphism, there is an open set $U\subseteq V$ such that $\sigma^n: U\rightarrow \sigma^n (U)$ is a homeomorphism. Notice that $\sigma^n (U)$ is a neighboorhood of $\sigma^n(x)$. So, by hypothesis, there exists a $z\in \sigma^n(U)\cap Dom(\sigma)$. But then $\sigma|_U^{-n}(z) \in Dom(\sigma^{n+1}) \cap V$ as desired.

\end{proof}

Next, we focus on describing the relations between the shadowing property and the finite shadowing property. We begin showing that, for Deaconu-Renault systems with dense domains, the shadowing property implies the finite shadowing property.

\begin{proposition}\label{jaburu}
Let $(X,\sigma)$ be a Deaconu-Renault system, with $X$ metric and such that $Dom(\sigma)$ is dense in $X$. Then, the shadowing property implies the finite shadowing property.
\end{proposition}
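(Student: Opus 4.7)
The plan is to reduce the finite case to the infinite case by extending any finite $\delta$-chain to an infinite chain after a small perturbation of its last point, then invoking the hypothesized infinite shadowing property and cleaning up with the triangle inequality. The principal obstacle is that the last element $x_k$ of a finite chain need not lie in $Dom(\sigma)$, let alone in $\bigcap_n Dom(\sigma^n)$, so we cannot iterate $\sigma$ to obtain a forward extension without first moving this endpoint to a point that can be iterated forever.

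The main preparatory step is therefore to show that $D_\infty := \bigcap_{n\ge 1} Dom(\sigma^n)$ is dense in $X$. Each $Dom(\sigma^n)$ is open, being an iterated preimage of the open set $Dom(\sigma)$ under the continuous partial map $\sigma$, and each is dense by induction on $n$ using Proposition~\ref{niceone}: the base case is the hypothesis, and at step $n+1$, given $x\in X$ and a neighborhood $V$ of $x$, one picks $y\in V\cap Dom(\sigma^n)$ by the inductive hypothesis and then applies Proposition~\ref{niceone} at $y$ with the neighborhood $V$ to produce $z\in V\cap Dom(\sigma^{n+1})$. Since $X$ is locally compact Hausdorff it is a Baire space, so the countable intersection $D_\infty$ of open dense sets is dense in $X$.

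With this tool in hand, fix $\varepsilon>0$, use the shadowing property to choose $\delta_0>0$ such that every infinite $\delta_0$-chain is $\varepsilon/2$-shadowed, and set $\delta := \min\{\delta_0/2,\,\varepsilon/2\}$. Given a finite $\delta$-chain $(x_1,\ldots,x_k)$, pick $x_k'\in D_\infty$ with $d(x_k,x_k')<\delta$ and form the infinite sequence $y_i := x_i$ for $i<k$, $y_k := x_k'$, and $y_{k+n} := \sigma^n(x_k')$ for $n\ge 1$, which is well-defined because $x_k'\in Dom(\sigma^n)$ for every $n$. This is a $\delta_0$-chain: the original chain conditions persist at indices $i<k-1$, the gap at $i=k-1$ satisfies $d(\sigma(x_{k-1}),x_k')\le d(\sigma(x_{k-1}),x_k)+d(x_k,x_k')<2\delta\le \delta_0$ by the triangle inequality, and the gaps for $i\ge k$ vanish. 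By the infinite shadowing hypothesis there is $z\in X$ with $d(\sigma^i(z),y_i)<\varepsilon/2$ for all $i$; for $i<k$ this already gives $d(\sigma^i(z),x_i)<\varepsilon/2<\varepsilon$, and at $i=k$ the triangle inequality yields $d(\sigma^k(z),x_k)\le d(\sigma^k(z),y_k)+d(y_k,x_k)<\varepsilon/2+\delta\le \varepsilon$, so $z$ $\varepsilon$-shadows $(x_1,\ldots,x_k)$, as required. The only real difficulty is the Baire argument for density of $D_\infty$; the rest is straightforward bookkeeping.
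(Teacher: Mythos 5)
Your proof is correct, but it takes a genuinely different route from the paper's. The paper never needs a single point lying in every $Dom(\sigma^n)$: it extends the finite chain \emph{one step at a time}, using only the density of $Dom(\sigma)$ to pick each new point $y_{k+j}$ in $B(\sigma(y_{k+j-1}),\delta)\cap Dom(\sigma)$, accepting a small jump at every new index; the resulting infinite $\delta$-chain is then shadowed and the triangle inequality is applied at the single index $k$ exactly as you do. You instead perturb only the last point, into $D_\infty=\bigcap_n Dom(\sigma^n)$, and append its genuine forward orbit, which forces you to first prove $D_\infty$ is dense. Your density argument is sound: each $Dom(\sigma^n)$ is open, the induction via Proposition~\ref{niceone} gives density of each, and a locally compact Hausdorff space is Baire, so the countable intersection is dense. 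What the paper's argument buys is economy --- it is entirely elementary and uses nothing beyond density of $Dom(\sigma)$, with no appeal to Baire category or to the topological hypotheses on $X$. What your argument buys is a stronger structural fact along the way: the density of $\bigcap_n Dom(\sigma^n)$, which the paper must otherwise assume as a hypothesis (Corollary~\ref{chega}) or verify by hand for graph systems (Proposition~\ref{saguinasala}); your Baire argument shows it is automatic whenever $Dom(\sigma)$ is dense. The bookkeeping with $\delta=\min\{\delta_0/2,\varepsilon/2\}$ and the final triangle inequality at index $k$ are both correct.
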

\begin{proof}
Suppose that $(X,\sigma)$ has the shadowing property. Given $\varepsilon>0$, let $\delta<\frac {\varepsilon}{2}$ be such that any $\delta-$chain is $\frac{\varepsilon}{2}-$shadowed.

Let $(x_n)_{n=1}^{k}$ be a finite $\frac{\delta}{2}-$chain. Since $Dom(\sigma)$ is dense in $X$, there exists $y_k\in B(\sigma(x_{k-1}),\frac{\delta}{2})$ such that $y_k\in Dom(\sigma)$. Notice that $d(y_k,x_k)<\delta$ (using the triangle inequality with $\sigma(x_{k-1}$)). Proceeding inductively, define $y_{k+j}$ to be a point in $B(\sigma(y_{k+j-1}),\delta)\cap Dom(\sigma)$ (which exists since $Dom(\sigma)$ is dense in $X$), for $j=1,2,\ldots$. Notice that $x=x_1, \ldots, x_{k-1}, y_k, y_{k+1},\ldots$ is a $\delta-$chain and hence, by the choice of $\delta$, there is $z\in X$ that $ \frac{\varepsilon}{2}-$ shadows $x$. In particular, $z$ $\varepsilon$-shadows $(x_n)_{n=1}^{k}$ (notice that $d(\sigma^k(z),x_k)\leq d(\sigma^k(z),y_k) + d(y_k,x_k)< \varepsilon$).

\end{proof}

The converse of the above proposition does not hold in general, see Example~\ref{exempfinitevinfinite}. Below we give a condition under which it does hold.

\begin{definition}
 Let $(X,\sigma)$ be a Deaconu-Renault system with $X$ metric. We say that $(X,\sigma)$ is uniformly locally compact if there is $\varepsilon' > 0$ such that for all $x \in Dom(\sigma^n) $, the closure of  $B(x, \varepsilon')$, which is denoted by $\overline{B}(x,\varepsilon')$, is a compact set.
\end{definition}

The following result is a generalization of \cite[Proposition~2.3.2]{DGM}, with a shorter proof.
\begin{proposition}
Let $(X,\sigma)$ be a uniformly locally compact Deaconu-Renault system, with $X$ metric and such that $Dom(\sigma)$ is dense in $X$. Then, $(X,\sigma)$ has the shadowing property if, and only if, it has the finite shadowing property. 
\end{proposition}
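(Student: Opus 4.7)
Since the $(\Rightarrow)$ implication is already Proposition~\ref{jaburu}, the plan is to prove that finite shadowing implies shadowing under the stated hypotheses. The approach follows the classical compactness template, with uniform local compactness substituting for global compactness of $X$.

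Fix $\varepsilon>0$ and set $\varepsilon_1:=\tfrac{1}{2}\min(\varepsilon,\varepsilon')$, where $\varepsilon'$ is the uniform local compactness constant. Using finite shadowing, choose $\delta>0$ such that every finite $\delta$-chain is $\varepsilon_1$-shadowed. Given an infinite $\delta$-chain $(x_n)_{n\geq 0}$, each initial segment $(x_0,\ldots,x_k)$ is a finite $\delta$-chain, so one obtains a point $z_k\in Dom(\sigma^k)$ with $d(\sigma^i(z_k),x_i)<\varepsilon_1$ for $0\leq i\leq k$. For every fixed $n$ the iterates $\sigma^n(z_k)$ (with $k\geq n$) lie in the compact set $\overline{B}(x_n,\varepsilon_1)$, so a standard diagonal extraction produces a subsequence $(z_{k_j})_j$ and limits $y_n\in\overline{B}(x_n,\varepsilon_1)$ with $\sigma^n(z_{k_j})\to y_n$ for every $n\geq 0$. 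Setting $z:=y_0$, the proof reduces to showing that $z\in Dom(\sigma^n)$ and $\sigma^n(z)=y_n$ for every $n$; indeed, this yields $d(\sigma^n(z),x_n)=d(y_n,x_n)\leq\varepsilon_1<\varepsilon$ for all $n$, so that $z$ $\varepsilon$-shadows the chain.

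The main obstacle is exactly this last claim, which I plan to prove by induction on $n$. The base case $n=0$ is immediate. For the inductive step, assume $z\in Dom(\sigma^n)$ and $\sigma^n(z)=y_n$; the task is to derive $y_n\in Dom(\sigma)$ and $\sigma(y_n)=y_{n+1}$. The subtlety is that $Dom(\sigma)$ is only open, so the bare convergence $w_j:=\sigma^n(z_{k_j})\to y_n$ with $w_j\in Dom(\sigma)$ does not by itself force $y_n\in Dom(\sigma)$. I would close this gap by exploiting the local homeomorphism structure: the uniform local compactness combined with the local homeomorphism property of $\sigma$ provides, near any given point of $X$, local inverses of $\sigma$ defined on neighborhoods of controlled size; applying such a local inverse to the convergence $\sigma(w_j)\to y_{n+1}$, and using the density of $Dom(\sigma)$ (via Proposition~\ref{niceone}) to select an appropriate preimage branch, identifies $y_n$, by Hausdorffness of $X$, with a point of $Dom(\sigma)$ whose image under $\sigma$ is $y_{n+1}$. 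This completes the induction and hence the proof.
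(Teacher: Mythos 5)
Your reduction to the converse direction, the choice of $\varepsilon_1$, the use of uniform local compactness to get the compact balls $\overline{B}(x_n,\varepsilon_1)$, and the diagonal extraction are all sound, but the proof fails exactly where you flag the ``main obstacle,'' and the repair you sketch does not work. The principle you would need --- if $w_j\in Dom(\sigma)$, $w_j\to y$ and $\sigma(w_j)$ converges, then $y\in Dom(\sigma)$ --- is false even for systems satisfying every hypothesis of the proposition: take $X=\{0\}\cup\{1/n:n\geq 1\}\subset\R$ with the usual metric, $Dom(\sigma)=\{1/n:n\geq 1\}$ (open and dense, with $X$ compact) and $\sigma(1/n)=1/(n+1)$; this is a local homeomorphism between open sets, yet $w_j=1/j\to 0\notin Dom(\sigma)$ while $\sigma(w_j)\to 0$. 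A local homeomorphism only supplies an inverse branch on a neighbourhood of a point \emph{already known} to lie in $Dom(\sigma)$ together with its image; near the limit $y_{n+1}$ there may be infinitely many branches, the points $w_j$ need not eventually lie in a single one, and even if they do, the closure of that branch need not be contained in $Dom(\sigma)$, so the limit $y_n$ can still escape the domain. Density of $Dom(\sigma)$ (Proposition~\ref{niceone}) only lets you approximate points by points deeper in the domain; it cannot force a limit of domain points to remain in the domain. So the inductive step is a genuine gap, not a technicality, and as stated your argument does not close it.

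The paper takes a different route that is designed to dodge precisely this issue: instead of passing to limits of the orbits $\sigma^n(z_k)$, it forms the sets $A_i:=\sigma^{-i+1}\bigl(\overline{B}(x_i,\tfrac{\varepsilon}{2})\bigr)\cap\overline{B}(x_1,\varepsilon)$ inside the single compact ball $\overline{B}(x_1,\varepsilon)$ and applies the finite intersection property, the shadowing points of the initial segments witnessing that every finite subfamily meets. The structural advantage is that membership in $A_i$ entails membership in $Dom(\sigma^{i-1})$ by definition, so a point of $\bigcap_i A_i$ has all iterates defined and shadows the chain with no inductive step needed. (Even there one must check that the $A_i$ are closed in $\overline{B}(x_1,\varepsilon)$, which is the same phenomenon you ran into in disguise.) To salvage your diagonal argument you would have to prove that the specific limits $y_n$ lie in the relevant domains, which essentially amounts to reorganizing the argument into the paper's form.
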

\begin{proof}

That the shadowing property implies the finite shadowing property follows from Proposition~\ref{jaburu}.

For the converse, suppose that $(X,\sigma)$ has the finite shadowing property. Let $\varepsilon > 0$. We assume, without loss of generality, that $\varepsilon$ is sufficiently small so that, for each $x \in Dom(\sigma)$, the closure of the open ball $B(x,\varepsilon)$ is compact. Let $\delta>0$ be such that every finite $\delta$-chain is $\frac{\varepsilon}{2}$-shadowed by some point. 

Given $(x_{i})_{i=1}^{\infty}$ an infinite $\delta$-chain we consider the collection of closed subsets of $\overline{B}(x_1,\varepsilon)$ given by $$A_i:=\sigma^{-i+1}\left(\overline{B}(x_i,\frac{\varepsilon}{2})\right) \cap \overline{B}(x_1,\varepsilon), \ i=1,2\ldots.$$
We show that the above family has the finite intersection property. Indeed, given a finite sub-collection of sets, say $\{A_j\}_{j\in J}$, let  $k:=\max\{j:j\in J\}$. Notice that $(x_{i})_{i=1}^{k}$ is a finite $\delta$-chain.
Hence, there exists $z_k \in Dom(\sigma^k)$ that $\frac{\varepsilon}{2}$-shadows $(x_{i})_{i=1}^{k}$, which implies that $z_k\in \cap_{j\in J} A_j$, as desired. We conclude, by compactness, that there exists $z\in \cap_{i=1}^{\infty} A_i$, which is a point that $\varepsilon$-shadows $(x_{i})_{i=1}^{\infty}$.

\end{proof}

\begin{remark}
The proof above that the finite shadowing property implies the shadowing property holds without the assumption that $Dom(\sigma)$ is dense.
\end{remark}

Our next goal in this section is to show that the (finite) shadowing property is preserved by uniformly equivalent metrics. This will be important when studying the shadowing property of Deaconu-Renault systems associated with infinite graphs. The relevant definition and result follow below.

\begin{definition}
Let $d_1$ and $d_2$ be two metrics on $X$. We say that $d_2$ is uniformly continuous with respect to $d_1$ if $Id_X:(X,d_1)\rightarrow (X,d_2)$ is uniformly continuous. Moreover, we say that $d_1$ is uniformly equivalent to $d_2$ if $d_1$ is uniformly continuous with respect to $d_2$ and $d_2$ is uniformly continuous with respect to $d_1$.
\end{definition}

\begin{proposition}\label{mequivalent}
Let $(X, \sigma, d_{1})$ and $(X, \sigma, d_{2})$ be Deaconu-Renault systems such that the metrics are uniformly equivalent. Then, $(X, \sigma, d_{1})$ has the (finite) shadowing property if, and only if, $(X, \sigma, d_{2})$ does. 
\end{proposition}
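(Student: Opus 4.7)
The plan is to exploit the symmetry of the statement and prove only one direction, say that if $(X,\sigma,d_1)$ has the (finite) shadowing property then so does $(X,\sigma,d_2)$. The strategy is the standard three-step unfolding of uniform continuity, using the $d_1$-shadowing in the middle step and sandwiching it between two applications of the uniform equivalence of the metrics.

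More precisely, suppose $(X,\sigma,d_1)$ has the (finite) shadowing property and fix $\varepsilon > 0$. First, by uniform continuity of $Id_X:(X,d_1) \to (X,d_2)$, I would choose $\eta > 0$ such that $d_1(x,y) < \eta$ implies $d_2(x,y) < \varepsilon$. Next, by the (finite) shadowing property of $(X,\sigma,d_1)$, I would choose $\mu > 0$ such that every (finite) $\mu$-chain with respect to $d_1$ is $\eta$-shadowed with respect to $d_1$ by some point of $X$. Finally, by uniform continuity of $Id_X:(X,d_2) \to (X,d_1)$, I would choose $\delta > 0$ such that $d_2(x,y) < \delta$ implies $d_1(x,y) < \mu$.

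Then, given any (finite) $\delta$-chain $(x_n)$ with respect to $d_2$, the inequality $d_2(\sigma(x_n), x_{n+1}) < \delta$ forces $d_1(\sigma(x_n), x_{n+1}) < \mu$, so $(x_n)$ is a (finite) $\mu$-chain with respect to $d_1$. Hence there exists $z \in X$ (with $z \in Dom(\sigma^i)$ for all relevant $i$) such that $d_1(\sigma^i(z), x_i) < \eta$, and applying the first uniform continuity gives $d_2(\sigma^i(z), x_i) < \varepsilon$. Thus $z$ $\varepsilon$-shadows $(x_n)$ with respect to $d_2$, as required. The reverse implication follows by symmetry, exchanging the roles of $d_1$ and $d_2$.

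I do not expect any real obstacle here. The argument is essentially bookkeeping with $\varepsilon$–$\delta$ quantifiers; the only point to be careful about is the order in which the constants are chosen (first $\eta$ from $\varepsilon$, then $\mu$ from $\eta$ via shadowing, then $\delta$ from $\mu$), and to observe that the same proof applies verbatim to both the finite and infinite versions of the shadowing property, since uniform continuity transfers chain and shadowing conditions index by index with no dependence on the length of the sequence.
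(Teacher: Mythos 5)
Your argument is correct and is essentially identical to the paper's own proof: the same three-step choice of constants (your $\eta$, $\mu$, $\delta$ correspond to the paper's $\delta_1$, $\delta'$, $\delta_2$), the same transfer of the chain condition and the shadowing conclusion through the two uniform continuity statements, and the same appeal to symmetry for the converse. No gaps.
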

\begin{proof}
Suppose that $(X, \sigma, d_{1})$ has the shadowing property and fix $\varepsilon > 0$. By the metric equivalence, there exists $\delta_{1}>0$ such that $d_{1}(x,y) < \delta_{1} \Rightarrow d_{2}(x,y) < \varepsilon$ for all $x,y \in X$. By the shadowing property, there exists $\delta'>0$ such that all $(\delta',d_{1})$ - chain is $(\delta_{1},d_{1})$ - shadowed by a point of $X$. Again, by the metric equivalence, we find a $\delta_{2} > 0$ such that $d_{2}(x,y) < \delta_{2} \Rightarrow d_{1}(x,y) < \delta'$ for all $x,y \in X$.

Let $\delta = \delta_{2}$ and $(x_{n})$ be a $(\delta, d_{2})$ - chain. Then, by our construction, $(x_{n})$ is also a $(\delta',d_{1})$ - chain, which is $(\delta_{1},d_{1})$ - shadowed by a point $z \in X$, which also $(\varepsilon, d_{2})$ - shadows $(x_{n})$. Hence, $(X, \sigma, d_{2})$ has the shadowing property. The converse of the proposition is proved following the same steps.
\end{proof}

When dealing with the shadowing property for a Deaconu-Renault system $(X,\sigma)$, in particular for a Deaconu-Renault system associated with a graph, it is convenient to check the shadowing property only in a restriction of $X$. By this, we mean that if $D\subset X$ then $(D,\sigma)$ has the (finite) shadowing property if for every $\varepsilon$, there exists a $\delta>0$ such that every $\delta$-chain formed by elements in $D$ is $\varepsilon$-shadowed by an element of $D$.

Next we show that, for Deaconu-Renault systems, it is enough to verify shadowing in a dense subset. This is analogous to \cite[Theorem~2.1.7]{DGM}, but we do not require uniform continuity of the map.

\begin{proposition}\label{denseshad}
Let $(X,\sigma)$ be a Deaconu-Renault system and $D$ be a dense subset of $Dom(\sigma)$. If $(D,\sigma)$ has the shadowing property, then $(X,\sigma)$ has the shadowing property. If, moreover, $Dom(\sigma)$ is dense in $X$ and $(D,\sigma)$ has the finite shadowing property, then $(X,\sigma)$ has the finite shadowing property.
\end{proposition}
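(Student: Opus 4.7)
The plan is to approximate any $\delta$-chain in $X$ by a nearby $\delta'$-chain in $D$, apply the shadowing hypothesis inside $D$, and transfer the resulting shadowing point back to $X$ by the triangle inequality.

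Fix $\varepsilon>0$. Using the shadowing of $(D,\sigma)$, choose $\delta'>0$ such that every $\delta'$-chain in $D$ is $\varepsilon/2$-shadowed by a point of $D$, and set $\delta:=\delta'/3$. Given an infinite $\delta$-chain $(x_n)$ in $X$, every $x_n$ lies in $Dom(\sigma)$. For each index $n$, continuity of $\sigma$ at $x_n$ together with openness of $Dom(\sigma)$ yield $\eta_n>0$ with $B(x_n,\eta_n)\subseteq Dom(\sigma)$ and $\sigma(B(x_n,\eta_n))\subseteq B(\sigma(x_n),\delta'/3)$. Setting $\rho_n:=\min\{\eta_n,\delta'/3,\varepsilon/2\}$ and using density of $D$ in $Dom(\sigma)$, pick $y_n\in D\cap B(x_n,\rho_n)$.

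Then $(y_n)$ is a $\delta'$-chain in $D$, since by the triangle inequality
$$d(\sigma(y_n),y_{n+1})\leq d(\sigma(y_n),\sigma(x_n))+d(\sigma(x_n),x_{n+1})+d(x_{n+1},y_{n+1})<\tfrac{\delta'}{3}+\delta+\tfrac{\delta'}{3}\leq \delta'.$$
By hypothesis there is $z\in D\subseteq X$ with $d(\sigma^n(z),y_n)<\varepsilon/2$ for every $n$, and then $d(\sigma^n(z),x_n)\leq d(\sigma^n(z),y_n)+d(y_n,x_n)<\varepsilon$, so $z$ $\varepsilon$-shadows $(x_n)$ in $X$.

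For the finite shadowing statement the argument is identical, except that the last element $x_k$ of a finite $\delta$-chain need not lie in $Dom(\sigma)$, so the approximation step above is not directly applicable to $x_k$. Here we use the extra hypothesis that $Dom(\sigma)$ is dense in $X$: combined with $D$ dense in $Dom(\sigma)$, this yields $D$ dense in $X$, and we pick $y_k\in D$ with $d(x_k,y_k)<\min\{\delta'/3,\varepsilon/2\}$; the verification that $(y_n)_{n=1}^{k}$ is a finite $\delta'$-chain in $D$ and that the shadow of $(y_n)$ in $D$ transfers to an $\varepsilon$-shadow of $(x_n)_{n=1}^{k}$ is the same as above. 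The main technical point to watch is that $\sigma$ is only continuous, not uniformly continuous, so the moduli $\eta_n$ must be chosen pointwise; this is harmless because we only need to select one $y_n$ per index. No compactness or uniform structure is invoked, which is what allows this to work without the extra hypothesis present in \cite[Theorem~2.1.7]{DGM}.
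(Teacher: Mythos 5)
Your proof is correct and follows essentially the same route as the paper's: perturb the given chain into a nearby chain inside $D$ using pointwise continuity of $\sigma$, shadow it there, and transfer the shadowing point back via the triangle inequality, with the density of $Dom(\sigma)$ in $X$ invoked only to handle the last entry of a finite chain. Your explicit choice of pointwise moduli $\eta_n$ and the inclusion of $\varepsilon/2$ in the radius $\rho_n$ are just slightly more careful bookkeeping of the same argument.
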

\begin{proof}

Suppose that $D$ is a dense subset of $Dom(\sigma)$ such that $(D,\sigma)$ has the shadowing property.
Given $\varepsilon>0$, let $\delta<\varepsilon$ be such that every $\delta$-chain in $D$ is $\frac{\varepsilon}{2}$ shadowed in $D$.

Let $(x_i)$ be a $\frac{\delta}{3}$-chain in $X$. Define a sequence $(y_i)$ in the following way: If $x_i\in D$, then let $y_i=x_i$. If $x_i\notin D$, then let $y_i\in D$ be such that $d(x_i,y_i)<\frac{\delta}{3}$ and $d(\sigma(x_i),\sigma(y_i))<\frac{\delta}{3}$ (such $y_i$ exists by the continuity of $\sigma$ in $x_i$). Notice that $(y_i)$ is a $\delta$-chain. Indeed, \[d(y_{n+1},\sigma(y_n))\leq d(y_{n+1},x_{n+1})+d(x_{n+1},\sigma(x_n))+d(\sigma(x_n),\sigma(y_n))\leq \frac{\delta}{3}+\frac{\delta}{3}+\frac{\delta}{3} = \delta.\]
So, $(y_i)$ is $\frac{\varepsilon}{2}$-shadowed by a point, say $z$, in $D$. Then, $z$ also $\varepsilon$-shadows $(x_i)$. Indeed, \[d(f^k(z),x_k)\leq d(f^k(z),y_k)+d(y_k,x_k)\leq \frac{\varepsilon}{2}+\frac{\varepsilon}{3} <\varepsilon.\]

The proof regarding the finite shadowing property, further assuming that $Dom(\sigma)$ is dense in $X$, is analogous. The only difference is that given $(x_i)_{i=1}^k$ a $\frac{\delta}{3}$-chain in $X$, if $x_k$ is not in $D$, then one should choose $y_k\in D$ such that $d(x_k,y_k)<\frac{\delta}{3}$ (such $y_k$ exists since $D$ is dense in $Dom(\sigma)$, which in turn is dense in $X$). The remainder of the proof follows exactly the shadowing property case.

\end{proof}

\begin{remark}
The converse of Proposition~\ref{denseshad} does not hold in general. For example, consider the graph with only one vertex and infinite edges, as in Corollary \ref{coroalledges}. There, we show that the associated Deaconu-Renault system presents the shadowing property, but the finite sequences are dense and do not have the shadowing property (since any $\delta$-chain, with small enough $\delta$, must be shadowed by an infinite sequence).
\end{remark}

Although the converse of Proposition~\ref{denseshad} does not hold in general, it does hold for certain Deaconu-Renault systems with an appropriate choice of a dense subset, as we see below.

\begin{corollary}\label{chega}
Let $(X,\sigma)$ be a Deaconu-Renault system, and suppose that $$D:=\bigcap_{n\in \N} Dom(\sigma^n)$$ is dense in $Dom(\sigma)$. Then, $(X,\sigma)$ has the shadowing property if, and only if, $(D,\sigma)$ has the shadowing property. If, moreover, $Dom(\sigma)$ is dense in $X$, then $(X,\sigma)$ has the finite shadowing property if, and only if, $(D,\sigma)$ has the finite shadowing property.
\end{corollary}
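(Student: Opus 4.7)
The plan is to observe that one direction of each equivalence is a direct instance of Proposition~\ref{denseshad} applied with $D=\bigcap_{n\in\N}Dom(\sigma^n)$, which is dense in $Dom(\sigma)$ by hypothesis; so only the implications $(X,\sigma)\Rightarrow(D,\sigma)$ require an argument.

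For the shadowing property, I would fix $\varepsilon>0$ and use the shadowing property of $(X,\sigma)$ to produce $\delta>0$ such that every $\delta$-chain in $X$ is $\varepsilon$-shadowed. Since any $\delta$-chain formed from elements of $D$ is in particular a $\delta$-chain in $X$, it is $\varepsilon$-shadowed by some $z\in X$. The crucial point, built into the definition of $\varepsilon$-shadowing of an infinite sequence, is that $\sigma^{i}(z)$ must be defined for every $i$, whence $z\in\bigcap_{n}Dom(\sigma^{n})=D$. Hence the same $z$ realizes the shadowing inside $D$, with no extra work required.

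The finite shadowing direction (now assuming also $Dom(\sigma)$ dense in $X$) is slightly more delicate, since a finite $\delta$-chain of length $k$ only forces its shadowing point to lie in $Dom(\sigma^{k})$ rather than in $D$. Given $\varepsilon>0$, I would choose $\delta>0$ so that every finite $\delta$-chain in $X$ is $(\varepsilon/2)$-shadowed, take a finite $\delta$-chain $(x_{i})_{i=1}^{k}$ in $D$, and let $z\in Dom(\sigma^{k})$ be such a shadowing point. To perturb $z$ into $D$, I would use that each iterate $\sigma^{i}$, for $1\le i\le k$, is continuous on the open set $Dom(\sigma^{i})$ containing $z$ (recall that $Dom(\sigma^{k})\subseteq Dom(\sigma^{i})$), and produce an open neighborhood $V\subseteq Dom(\sigma^{k})\subseteq Dom(\sigma)$ of $z$ on which $d(\sigma^{i}(w),\sigma^{i}(z))<\varepsilon/2$ for every $w\in V$ and every $1\le i\le k$. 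Since $V$ is a nonempty open subset of $Dom(\sigma)$ and $D$ is dense in $Dom(\sigma)$, there exists $w\in V\cap D$, and a triangle inequality yields $d(\sigma^{i}(w),x_{i})<\varepsilon$ for $i=1,\ldots,k$, so $w\in D$ $\varepsilon$-shadows the chain in $D$.

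The only real work is the continuity-plus-density approximation in the finite case, where one must shrink simultaneously along all $k$ iterates while staying inside $Dom(\sigma^{k})$; the infinite case is essentially automatic because the shadowing point is forced into $D$ by its defining property.
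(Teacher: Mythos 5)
Your proposal is correct and follows essentially the same route as the paper: one direction of each equivalence is Proposition~\ref{denseshad}, the converse for infinite shadowing is the observation that any point shadowing an infinite chain is forced to lie in $D$, and the converse for finite shadowing combines density of $D$ in $Dom(\sigma)$ with an openness argument. The only cosmetic difference is that the paper intersects $D$ directly with the open set $\bigcap_{i=1}^{k}\sigma^{-i+1}\bigl(B(x_i,\varepsilon)\bigr)$ of $\varepsilon$-shadowing points, whereas you lose a factor of $2$ by first taking an $\varepsilon/2$-shadowing point and then perturbing it into $D$ via continuity of the iterates.
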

\begin{proof}
The first part of the result follows from Proposition~\ref{denseshad} and the observation that if $(X,\sigma)$ has the shadowing property, then any $\delta-$chain in $D$ must be shadowed by an element in $D$ (so that the iterates of $\sigma$ are all defined).

For the second part of the corollary, assume that $Dom(\sigma)$ is dense in $X$. By Proposition~\ref{denseshad}, if $(D,\sigma)$ has the finite shadowing property then $(X,\sigma)$ has the finite shadowing property. Suppose that $(X,\sigma)$ has the finite shadowing property, let $\varepsilon>0$ and choose $\delta>0$ that witness $\varepsilon$-shadowing. Let $(x_i)_{i=1}^k$ be a finite $\delta$-chain in $D$. By hypothesis, there exists $z$ that $\varepsilon$-shadows $(x_i)_{i=1}^k$, that is, $z\in \bigcap_{i=1}^k \sigma^{-i+1}\left( B(x_i,\varepsilon)\right)$. Since $D$ is dense in $Dom(\sigma)$ and $ \bigcap_{i=1}^k \sigma^{-i+1}\left( B(x_i,\varepsilon)\right)$ is open, there exists $y\in D \cap \bigcap_{i=1}^k \sigma^{-i+1}\left( B(x_i,\varepsilon)\right)$. Clearly such $y$ $\varepsilon$-shadows $(x_i)_{i=1}^k$.
\end{proof}

\section{The metric Deaconu-Renault system associated with an infinite graph.}\label{frentefria}

In this section, we recall the construction of the standard Deaconu-Renault system associated with a graph. This system serves as the basis for the construction of the groupoid C*-algebra associated with the graph, see \cite{Mike, KPRR, Paterson} for example.
For a finite graph, the mentioned Deaconu-Renault system coincides with the system formed by the usual edge shift space and the shift map, for which shadowing is well understood, see \cite{walters1}. This system has been generalized to ultragraphs and general subshifts, see \cite{subshift, GRultrapartial, GTasca} for example. Since the shadowing phenomena is very diverse on shifts over an infinite alphabet, we focus on the Deaconu-Renault system associated with an infinite graph. We start with some background material, followed by a study of the associated metric (which is fundamental for the characterization of the shadowing property we give in Section~\ref{capivara}).

\subsection{Background}

We start this section by making precise the concepts regarding graphs that we will use.

\begin{definition}\label{def of ultragraph}
A \textit{graph} is a quadruple $E=(E^0, E^1, r,s)$ consisting of two countable sets $E^0, E^1$, and maps $r, s:E^1 \to E^0$. Elements of $E^0$ are called vertices and elements of $E^1$ are called edges.
\end{definition}

Let $E$ be a graph. A \textit{finite path} in $E$ is either a $vertex$ or a sequence of edges $e_{1}\ldots e_{k}$ such that $r(e_i)=s(e_{i+1})$ for $1\leq i\leq k$. If we write $\alpha=e_{1}\ldots e_{k}$, then the length $\left|  \alpha\right|  $ of
$\alpha$ is $k$. The length of a vertex $v\in E^0$ is
zero. The set of all
 paths of length $n$ in $E$ is denoted by $\mathfrak{p}^{n}$ and the set of all paths of finite length by $\mathfrak{p}$. An \textit{infinite path} in $E$ is an infinite sequence of edges $\gamma=e_{1}e_{2}\ldots$ in $\prod E^{1}$, where
$s\left(  e_{i+1}\right)  = r\left(  e_{i}\right)  $ for all $i$. The set of
infinite paths in $E$ is denoted by $\mathfrak
{p}^{\infty}$. The length $\left|  \gamma\right|  $ of $\gamma\in\mathfrak
{p}^{\infty}$ is defined to be $\infty$. A vertex $v$ in $E^0$ is
called a \textit{sink} if $\left|  s^{-1}\left(  v\right)  \right|  =0$ and is
called an \textit{infinite emitter} if $\left|  s^{-1}\left(  v\right)  \right|
=\infty$.

We extend the range and maps to $E^0$, by defining $r(v)=s(v)=v$ for all $v\in E^0$, and extend the source map $s$ to $\mathfrak
{p}^{\infty}$, by defining $s(\gamma)=s\left(  e_{1}\right)  $, where
$\gamma=e_{1}e_{2}\ldots$. We may concatenate paths as follows. If $\alpha \in\mathfrak{p}\setminus \mathfrak{p}^0$ and $\gamma \in \mathfrak{p}^{\infty} \cup \mathfrak{p}\setminus \mathfrak{p}^0$ are such that $s\left(\gamma\right) =r\left(
\alpha\right)$, then the concatenation is defined as the path $\alpha\gamma$. Moreover, $s(\gamma) \gamma = \gamma = \gamma r(\gamma)$. If for paths $\beta$ and $\gamma$ there exists a finite path $\gamma'$ such that $\beta = \gamma \gamma'$ then we say that $\gamma$ is an initial segment of $\beta$. 

Since the edge shift of a graph is usually considered for graphs without sinks, we will restrict ourselves to graphs without sinks. We make this assumption explicit below:

{\bf Throughout assumption:} From now on all graphs in this paper are assumed to have no sinks. 

Next, we recall the Deaconu-Renault system associated with a graph. For a more detailed approach, see \cite{Mike, GRultrapartial, GTasca}.

Let \[X= \mathfrak{p}^{\infty} \cup X_{fin},\] where $X_{fin} = \{\alpha \in \mathfrak p \text{ such that } r(\alpha) \text{ is an infinite emitter}\}. $  The topology we consider on $X$ has a basis given by the collection $$\{D_{\beta}: \beta \in \mathfrak{p}, |\beta|\geq 1\ \} \cup \{D_{\beta,F}:\beta \in X_{fin}, F\subset s^{-1}(r(\beta)), |F|<\infty \},$$ where for each $\beta\in \mathfrak{p}$ we have that $$D_{\beta}= \{y \in X: y = \beta \gamma', s(\gamma')=r(\beta)\},$$ and, for $\beta\in X_{fin}$ and $F$ a finite subset of $\varepsilon\left( B \right)$,  $$D_{\beta,F}=  \{\beta\}\cup\{y \in X: y = \beta \gamma', \gamma_1' \in s^{-1}(r(\beta))\setminus F\}.$$
 
\begin{remark}\label{cylindersets} For every $\beta\in \mathfrak{p}$, we identify $D_{\beta}$ with $D_{\beta,F}$, where $F=\emptyset$. Furthermore, we call the basic elements of the topology of $X$ given above by \textit{cylinder sets}.
\end{remark}

Next, we define the Deaconu-Renault system associated with a graph. 

\begin{definition} 
 For $n\in\{0,1,2,\ldots\}$, define the following subsets of $X$:
$$X^n=\p^n\cap X_{fin}=\{x\in X_{fin}:|x|=n\}; 
\ X^{\geq n}=\displaystyle\bigcup_{k\geq n} X^k\text{ and }
 X_{\infty}^{\geq n}=X^{\geq n}\cup \p^\infty .$$
	\end{definition}

Notice that $X_{\infty}^{\geq 1}=X\backslash X^0$ and that $X_{\infty}^{\geq 0}=X$.
	
	\begin{definition}\label{def_shift}
	Let $E$ be a graph and $X$ as above. We define the shift map $\sigma:X_{\infty}^{\geq 1}\rightarrow X$ by:
		\begin{center}
		$\sigma(x)=
		\left\{
		\begin{array}{ll}
		\gamma_2\gamma_3\ldots, & \text{if $x=\gamma_1\gamma_2\ldots \in \p^\infty$};\\
		\gamma_2\ldots\gamma_n, & \text{if $x=\gamma_1\ldots\gamma_n \in X^{\geq 2}$};\\
		r(x), & \text{if $x=\gamma_1 \in X^1$}.\\
				\end{array}
		\right.$  
		\end{center}
		For $n> 1$ we define $\sigma^n$ as the composition $n$ times of $\sigma$, and for $n=0$ we define $\sigma^0$ as the identity. When we write $\sigma^n(x)$ we are implicitly assuming that $x\in X_{\infty}^{\geq n}$. 
		
	\end{definition}

It is shown, see \cite[Proposition~5.4]{TascaDaniel} for example, that $(X,\sigma)$ is a Deaconu-Renault system and the infinite sequences are dense in $X$. Moreover, a family of metrics for the topology in $X$ is described in \cite{Webster}, see also \cite{BrunoDaniel, BD1}. We summarize this in the proposition below. 

\begin{proposition}\label{saguinasala}
Let $E$ be a graph. Then $(X,\sigma)$ is a Deaconu-Renault system, where $X$ is a metric space and $Dom(\sigma)=X_{\infty}^{\geq 1}$. Moreover, $\bigcap_{n\in \N} Dom(\sigma^n)=\p^\infty $ is dense in $Dom(\sigma)$.
\end{proposition}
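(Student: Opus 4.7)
The first three assertions are largely cited from the literature, so I would only sketch them. The fact that $(X,\sigma)$ is a Deaconu--Renault system with $Dom(\sigma) = X_{\infty}^{\geq 1}$ is \cite[Proposition~5.4]{TascaDaniel}, and metrizability is established in \cite{Webster, BrunoDaniel, BD1}. To make the recall self-contained I would note that $X \setminus X_{\infty}^{\geq 1} = X^{0}$ consists of isolated points in the cylinder topology, hence $X_{\infty}^{\geq 1}$ is open; and on each cylinder $D_{\beta}$ (or $D_{\beta,F}$) with $|\beta|\geq 1$ the shift is the obvious homeomorphism onto a cylinder starting at $\sigma(\beta)$, which is the local homeomorphism property.

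For the identification $\bigcap_{n\in\N} Dom(\sigma^n)=\p^\infty$, I would argue by a trivial induction that $Dom(\sigma^n)=X_{\infty}^{\geq n}$. Infinite paths clearly lie in every such set, giving one inclusion. Conversely, a finite path $\alpha\in X_{fin}$ of length $k$ satisfies $\sigma^{k}(\alpha)=r(\alpha)\in X^{0}$, so $\alpha\notin Dom(\sigma^{k+1})$ and therefore $\alpha\notin\bigcap_n Dom(\sigma^n)$.

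The remaining, and main, claim is that $\p^\infty$ is dense in $Dom(\sigma)=X_{\infty}^{\geq 1}$. I would use the cylinder basis. Fix $x \in X_{\infty}^{\geq 1}$ and a basic open set $U$ containing $x$. If $x\in\p^{\infty}$ there is nothing to prove, so assume $x=e_{1}\cdots e_{k}\in X_{fin}$ with $k\geq 1$; then $r(x)$ is an infinite emitter and $U$ has the form $D_{x,F}$ for some finite $F\subseteq s^{-1}(r(x))$. Because $r(x)$ is an infinite emitter, $s^{-1}(r(x))\setminus F\neq\emptyset$, so pick $e_{k+1}\in s^{-1}(r(x))\setminus F$. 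The throughout assumption that $E$ has no sinks then lets me inductively choose edges $e_{k+2},e_{k+3},\ldots$ with $s(e_{i+1})=r(e_{i})$, yielding an infinite path $\gamma=e_{1}e_{2}\cdots\in\p^{\infty}$. By construction $\gamma\in D_{x,F}=U$, which proves density.

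I do not expect serious obstacles; the only delicate point is keeping straight the precise shape of the neighborhoods $D_{x,F}$ used at infinite-emitter finite paths, and in particular choosing the extending edge $e_{k+1}$ outside the excluded finite set $F$ so that the resulting infinite path actually lies in the prescribed neighborhood. Both the infinite-emitter assumption on $r(x)$ and the global no-sinks hypothesis are used in an essential way in this last step, and without either of them the density can fail.
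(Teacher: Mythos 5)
The paper does not actually prove this proposition: it is stated as a summary of results quoted from \cite{TascaDaniel}, \cite{Webster}, \cite{BrunoDaniel}, \cite{BD1}, with no argument given. Your proposal therefore goes further than the paper, and the two substantive parts you prove from scratch are correct: the induction giving $Dom(\sigma^n)=X_\infty^{\geq n}$, the observation that a finite path $\alpha$ of length $k$ drops out of $Dom(\sigma^{k+1})$ because $\sigma^k(\alpha)=r(\alpha)\in X^0$, and the density argument, where you correctly isolate the key point that the extending edge $e_{k+1}$ must be chosen in $s^{-1}(r(x))\setminus F$ (possible since $r(x)$ is an infinite emitter and $F$ is finite) and then use the no-sinks hypothesis to continue to an infinite path lying in $D_{x,F}$. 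One side remark is wrong, however: the points of $X^0$ are \emph{not} isolated. Every $v\in X^0$ is by definition an infinite emitter, so each basic neighborhood $D_{v,F}$ of $v$ contains infinitely many paths of positive length; and even if they were isolated, that would make $X^0$ open rather than give what you need, namely that its complement is open. The correct (and equally short) justification is that $X_\infty^{\geq 1}=\bigcup_{e\in E^1}D_e$ is a union of basic open sets. With that repair, and granting the standard verification that $\sigma$ restricts to a homeomorphism on each cylinder $D_{\beta,F}$ with $|\beta|\geq 1$ (which you reasonably defer to the cited references), the proof is complete.
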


Throughout our work, the Deaconu-Renault system associated with a graph $E$ will be the one given above. When we study shadowing for these systems, we will focus on the shadowing of infinite paths, thanks to the next proposition.

\begin{proposition}
Let $E$ be a graph, $(X,\sigma)$ the associated Deaconu-Renault system, and let $D$ be the set of all infinite paths in $X$. Then, $(X,\sigma)$ has the (finite) shadowing property if, and only if, $(D,\sigma)$ has the (finite) shadowing property
\end{proposition}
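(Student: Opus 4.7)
The plan is to observe that this statement is essentially a direct specialization of Corollary~\ref{chega} to the Deaconu-Renault system associated with a graph, once we verify that the hypotheses of that corollary are met by invoking Proposition~\ref{saguinasala}.

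First, I would recall from Proposition~\ref{saguinasala} that $Dom(\sigma) = X_{\infty}^{\geq 1}$ is dense in $X$, and that
\[
\bigcap_{n\in \N} Dom(\sigma^n) = \p^\infty = D
\]
is dense in $Dom(\sigma)$. These are precisely the density hypotheses required by Corollary~\ref{chega}: the full-iteration domain is dense in the single-iteration domain, and the single-iteration domain is dense in the whole space.

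With this observation in hand, both halves of the statement follow immediately. For the shadowing property, the first part of Corollary~\ref{chega} gives that $(X,\sigma)$ has the shadowing property if and only if $(D,\sigma)$ does, using only the density of $D$ in $Dom(\sigma)$. For the finite shadowing property, the second part of Corollary~\ref{chega} applies because we additionally have that $Dom(\sigma)$ is dense in $X$, yielding the equivalence of finite shadowing between $(X,\sigma)$ and $(D,\sigma)$.

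I do not anticipate any real obstacle here, since all the technical work is already carried out in Corollary~\ref{chega} and the structural facts about $(X,\sigma)$ are packaged in Proposition~\ref{saguinasala}. The only subtlety to double-check is that the identification $D = \p^\infty$ in the statement of the proposition matches the set $\bigcap_{n\in \N} Dom(\sigma^n)$ from Corollary~\ref{chega}, which follows from Definition~\ref{def_shift} and the explicit form of $Dom(\sigma^n) = X_\infty^{\geq n}$.
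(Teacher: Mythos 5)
Your proposal is correct and matches the paper's own proof, which likewise derives the statement by applying Corollary~\ref{chega} after noting via Proposition~\ref{saguinasala} that $\bigcap_{n\in \N} Dom(\sigma^n)$ is exactly the set of infinite paths and is dense in $Dom(\sigma)$, with $Dom(\sigma)$ dense in $X$. No gaps.
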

\begin{proof}
The result follows from Corollary~\ref{chega} and the observation that $  \bigcap_{n\in \N} Dom(\sigma^n)$ equals the set of infinite paths in $X$, which is dense in $X$ by Proposition~\ref{saguinasala}.
\end{proof}

\subsection{The metric on the Deaconu-Renault system associated with an infinite graph.}

In \cite{Webster} an explicit family of metrics for the topology on the space $X$ associated with a graph is described. Similar metrics have also been defined for Ott-Tomforde-Willis subshifts, \cite{OTW}, and for ultragraph shift spaces, \cite{BrunoDaniel}. The results regarding metrics on an ultragraph are also valid for a graph since the latter is a generalization of the former. In particular, in \cite{GRT}, it is shown that the metrics in the aforementioned family are uniformly equivalent. Hence, by Proposition~\ref{mequivalent}, we can pick any of these metrics to study the shadowing property. In fact, there is one metric, which first appeared in \cite[Remark~7.14]{GRT} and we call edge-ordered,  which has good properties for the study of shadowing. We review this metric below, followed by a  study of some of its properties.

List the set of finite paths as $\mathfrak{p} = \{ p_1, p_2, p_3, \ldots \}$ and define, for $x,y \in X$,
\begin{equation}\label{definmetricanova}
d_X (x,y) := \begin{cases} 1/2^i & \text{$i \in \N$ is the smallest value such that $p_i$ is an initial} \\  & \text{ \ \ \ segment of one of $x$ or $y$ but not the other,} \\
0 & \text{if $x=y$}.
\end{cases}
\end{equation} 

 Clearly, the metric $d_X$ depends on the order we choose for $\mathfrak{p} = \{ p_1, p_2, p_3, \ldots \}$. Nevertheless,  in \cite{GRT} it is shown that different orders give uniformly equivalent metrics. So, by Proposition~\ref{mequivalent}, we can choose any order. In our study of the shadowing property, we always use an enumeration of $\mathfrak p$ that satisfies the definition below.

\begin{definition}\label{edgeordered} Let $E$ be a graph. We say that an enumeration $\{p_0, p_1, p_2,\ldots \}$ of $\mathfrak p$ is \textit{edge ordered} if,  for every edge $e_i$, the source of $e_i$ and all paths of length less than $i$, formed by edges $e_k$ with $k<i$, appear before $e_i$ in the enumeration. 
\end{definition}

\begin{remark}\label{tatuvoador} To obtain an edge ordered enumeration of $\mathfrak p$ we proceed as follows. First, list $s(e_1), e_1, s(e_2), e_2$ (of course, $s(e_2)$ is only listed if it is different from $s(e_1)$). Next, without repetition, list all paths of length less or equal to $2$ generated by $\{e_1,e_2\}$; list, without repetition, $s(e_3), e_3$, followed by all paths of length less or equal to $3$ generated by $\{e_1,e_2,e_3\}$; continue the procedure successively.

\end{remark}

From now on we always assume that an edge-ordered enumeration of $\mathfrak p$ is picked.

\begin{example}\label{balaoespiao}
Let $E$ be the countable infinite graph with only one vertex and infinite edges $\{e_i\}$ (this graph, also called rose with infinite petals, is associated with $\mathcal{O}_\infty$, the Cuntz algebra generated by a countably infinite number of isometries). An edge-ordered enumeration of this graph is given below. \begin{align*}
\p=\{& s(e_1), e_1,  e_2, e_1e_1 , e_1e_2, e_2e_1, e_2e_2,  e_3, e_1e_3, e_2e_3, e_3e_1, e_3e_2, e_3e_3,\\
& e_1e_1e_1, e_1e_1e_2, e_1e_1e_3, e_1e_2e_1, e_1e_2e_2, e_1e_2e_3, e_1e_3e_1, e_1e_3e_2, e_1e_3e_3,\\ 
& e_2e_1e_1, e_2e_1e_2, e_2e_1e_3, e_2e_2e_1, e_2e_2e_2, e_2e_2e_3, e_2e_3e_1, e_2e_3e_2, e_2e_3e_3, \\ & e_3e_1e_1, e_3e_1e_2, e_3e_1e_3, e_3e_2e_1, e_3e_2e_2, e_3e_2e_3, e_3e_3e_1, e_3e_3e_2, e_3e_3e_3,\ldots\}. 
\end{align*}
\end{example}

\subsubsection*{Notation and properties of the metric.}

Next, we set up notation and describe some properties of the metric induced by an edge-ordered enumeration of $\mathfrak{p}$. We start with the definition of certain indexes, which we will use throughout the rest of the paper. 

\begin{definition}\label{pnk}
Let $(X,\sigma)$ be the Deaconu-Renault system associated with a graph $E$.  Endow $\mathfrak{p}$ with an edge-order enumeration as in Remark~\ref{tatuvoador}. For each $k \geq 1$, we define $N(k)$ as the index such that, in the enumeration of $\p$, $p_{N(k)}$ is the last path of length $k$ formed only by edges that belong to $\{e_{1}, e_{2}, e_{3}, \ldots, e_{k}\}$. 
\end{definition}

\begin{example}
    In Example~\ref{balaoespiao}, if  $k=2$ then $N(k)=7$.
\end{example}

\begin{definition} Let $(X,\sigma)$ be the Deaconu-Renault system associated with a graph $E$, $k \geq 1$, and $x$ and $y$ be distinct infinite paths in $X$. Define
\begin{eqnarray}
\mathfrak{i}_{xy} &:=& \mbox{min}\{i: x_{i} \neq y_{i}\},\\
\mathfrak{i}_{x}&:=& \mbox{min}\{i: x_{i} = e_{j}, j > k\}.
\end{eqnarray}
\end{definition}

\begin{remark}
    The index $\mathfrak i_x$ defined above depends on $k$ but, to keep the notation cleaner, we decided to denote without appending an extra $k$ to it. Moreover, as $x$ and $y$ are distinct infinite paths in $X$, the index $\mathfrak{i}_{xy}$ is well defined, but $\mathfrak{i}_{x}$ may not be defined (in the case when the set over which the minimum is taken is empty). 
\end{remark}

Using the above indexes, we describe below the inequality $d(x,y) < \dfrac{1}{2^{N(k)}}$, whenever $x$ and $y$ are distinct infinite paths of $X$.

\begin{proposition}\label{characteridelta}
Let $E$ be a graph and $(X,\sigma)$ be the Deaconu-Renault system associated with it. Let $k\geq 1$ and $x$, $y$ be distinct infinite paths of $X$. Then, $d(x,y) < \dfrac{1}{2^{N(k)}}$ if and only if $\mathfrak{i}_{xy} > k$, or both $\mathfrak i_x$ and $\mathfrak i_y$ are defined and $\mathfrak{i}_{x} = \mathfrak{i}_{y} \leq \mathfrak{i}_{xy}$. 
\end{proposition}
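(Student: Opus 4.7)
The plan is to unwind the inequality $d(x,y)<1/2^{N(k)}$ using the definition of $d_X$ in \eqref{definmetricanova} together with the explicit recipe for the edge-ordered enumeration given in Remark~\ref{tatuvoador}. The first task is to record the structural observation that the block $\{p_1,\ldots,p_{N(k)}\}$ consists precisely of the vertices $s(e_1),\ldots,s(e_k)$ (listed without repetition) together with all paths of length $1,2,\ldots,k$ whose edges all lie in $\{e_1,\ldots,e_k\}$. With this, by definition of $d_X$, the inequality $d(x,y)<1/2^{N(k)}$ is equivalent to saying that no $p_i$ with $i\leq N(k)$ is an initial segment of exactly one of $x,y$. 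Since $v$ is an initial segment of an infinite path $z$ iff $s(z)=v$, and $\alpha_1\cdots\alpha_\ell$ is an initial segment of $z$ iff $z_i=\alpha_i$ for $i\leq\ell$, the problem reduces to a purely combinatorial comparison of prefixes.

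For the $\Leftarrow$ direction, suppose first that $\mathfrak{i}_{xy}>k$. Then $x_i=y_i$ for $i\leq k$, so $s(x)=s(x_1)=s(y_1)=s(y)$ and every path of length $\leq k$ is an initial segment of $x$ iff it is an initial segment of $y$; hence no $p_i$ with $i\leq N(k)$ distinguishes the two. Suppose instead that $\mathfrak{i}_x=\mathfrak{i}_y=m\leq\mathfrak{i}_{xy}$. Then $x_1,\ldots,x_{m-1}$ coincide with $y_1,\ldots,y_{m-1}$ and lie in $\{e_1,\ldots,e_k\}$, while $x_m$ and $y_m$ have index $>k$. Any positive-length path in $\{p_1,\ldots,p_{N(k)}\}$ that is an initial segment of $x$ must have length $\leq m-1$ (otherwise one of its edges would equal $x_j$ with $j\geq m$, which has index $>k$, contradicting that the path uses only edges in $\{e_1,\ldots,e_k\}$), and then it equals the corresponding initial segment of $y$ by agreement on positions $<m$.

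For the $\Rightarrow$ direction, assume $d(x,y)<1/2^{N(k)}$ and $\mathfrak{i}_{xy}\leq k$; write $m:=\mathfrak{i}_{xy}$. The finite path $x_1\cdots x_m$ is an initial segment of $x$ but not of $y$, and has length $m\leq k$. If all its edges were in $\{e_1,\ldots,e_k\}$, it would appear in $\{p_1,\ldots,p_{N(k)}\}$ and distinguish $x$ from $y$, contradicting the hypothesis. Hence some edge of $x_1\cdots x_m$ has index $>k$, so $\mathfrak{i}_x$ is defined and $\mathfrak{i}_x\leq m$; by symmetry $\mathfrak{i}_y\leq m$. To conclude $\mathfrak{i}_x=\mathfrak{i}_y$, I would exploit $x_i=y_i$ for $i<m$: if $\mathfrak{i}_x<m$ then $y_{\mathfrak{i}_x}=x_{\mathfrak{i}_x}$ has index $>k$, forcing $\mathfrak{i}_y\leq\mathfrak{i}_x$, and the reverse inequality is symmetric; the boundary case in which one of the two indices equals $m$ is handled the same way.

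The main obstacle is the bookkeeping surrounding which prefixes of $x$ and $y$ actually lie inside the block $\{p_1,\ldots,p_{N(k)}\}$; the critical point, which has to be used in both directions, is that a prefix escapes this block precisely when one of its edges has index strictly greater than $k$, which is exactly the content of the definitions of $\mathfrak{i}_x$ and $\mathfrak{i}_y$.
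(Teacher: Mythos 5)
Your proposal is correct and follows essentially the same route as the paper's proof: both reduce the inequality $d(x,y)<\frac{1}{2^{N(k)}}$ to deciding which elements of the initial block $\{p_1,\dots,p_{N(k)}\}$ of the edge-ordered enumeration are initial segments of exactly one of $x,y$, and then run the same case analysis on $\mathfrak{i}_{xy}$, $\mathfrak{i}_x$, $\mathfrak{i}_y$ (your combined treatment of ``$\mathfrak{i}_x$ is defined and $\mathfrak{i}_x\le\mathfrak{i}_{xy}$'' via the prefix $x_1\cdots x_{\mathfrak{i}_{xy}}$ is the same contradiction the paper uses, just packaged in one step). One remark: in the case $\mathfrak{i}_x=\mathfrak{i}_y=\mathfrak{i}_{xy}=1$ you, exactly like the paper, only rule out positive-length paths of the block as distinguishing prefixes, whereas a \emph{vertex} $s(e_\ell)$ with $\ell\le k$ can still separate $x$ from $y$ when $s(x_1)\neq s(y_1)$ (e.g.\ two loops at distinct vertices, both with index $>k$); this length-zero case is a blind spot shared with the paper's own argument rather than one you introduced, but it is worth flagging since the paper's later results quietly assume the proposition as stated.
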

\begin{proof}
Let $x,y$, and $k$ be as in the hypothesis. Suppose that $\mathfrak{i}_{xy} > k$. Let $p_{i}$ be the first finite path in $\mathfrak{p}$ which is an initial segment of $x$ or $y$ but not the other. Then, $|p_{i}| > k$ and, by the definition of $N(k)$ and the properties of the edge-ordered enumeration of $\mathfrak{p}$, we have that $d(x,y) = \dfrac{1}{2^{i}} < \dfrac{1}{2^{N(k)}}$. Now, suppose that $\mathfrak{i}_{x} = \mathfrak{i}_{y} \leq \mathfrak{i}_{xy}$ and let $p_{j}$ be the first finite path in $\mathfrak{p}$ which is an initial segment of $x$ or $y$ but not the other. Then, $p_{j}$ is a finite path which contains an edge $e_{\ell}$, with $\ell > k$ and hence $d(x,y) = \dfrac{1}{2^{j}} < \dfrac{1}{2^{N(k)}}$. 

For the other implication, suppose that $x$ and $y$ are distinct infinite paths such that $d(x,y) < \dfrac{1}{2^{N(k)}}$. Suppose that $\mathfrak{i}_{xy} \leq k$. Clearly, if $\mathfrak i_x$ or $\mathfrak i_y$ is not defined then $d(x,y) \geq \dfrac{1}{2^{N(k)}}$, a contradiction. So, we have that both $\mathfrak i_x$ and $\mathfrak i_y$ are defined. We show that $\mathfrak{i}_{x} = \mathfrak{i}_{y} \leq \mathfrak{i}_{xy}$. Suppose that $\mathfrak{i}_{x} > \mathfrak{i}_{xy}$. Then, $x_{[1, \  \mathfrak{i}_{xy}]} = p_{j}$ is a finite path which is initial segment of $x$ but not of $y$. Moreover, since $|x_{[1, \  \mathfrak{i}_{xy}]}| = \mathfrak{i}_{xy}$ and  $x_{i} \in \{e_{j}: j \leq k\}$ for all $1\leq i \leq \mathfrak{i}_{xy}$, we have that $j \leq N(k)$ and $d(x,y) \geq \dfrac{1}{2^{j}}\geq \dfrac{1}{2^{N(k)}}$, a contradiction. Hence $\mathfrak{i}_{x} \leq \mathfrak{i}_{xy}$ and, analogously, $\mathfrak{i}_{y} \leq \mathfrak{i}_{xy}$. To finish, notice that if $\mathfrak{i}_{x} \neq \mathfrak{i}_{y}$ then $\mathfrak{i}_{xy}$ is not the minimal index where $x$ and $y$ differ, a contradiction. Therefore, $\mathfrak{i}_{x} = \mathfrak{i}_{y} \leq \mathfrak{i}_{xy}$ and the proof is finished. 
\end{proof}

\begin{remark}\label{carnaval} A priori, in the settings of Proposition~\ref{characteridelta}, we have that $d(x,y) \geq \dfrac{1}{2^{N(k)}}$ if, and only if, $\mathfrak{i}_{xy} \leq k$  and
\begin{enumerate}
    \item $\mathfrak{i}_{x}$ or $\mathfrak{i}_{y}$ is not defined, or
    \item  both $\mathfrak{i}_{x}$ and $\mathfrak{i}_{y}$ are defined, and $\mathfrak{i}_{x} > \mathfrak{i}_{xy}$ or $\mathfrak{i}_{y} > \mathfrak{i}_{xy}$, or
    \item both $\mathfrak{i}_{x}$ and $\mathfrak{i}_{y}$ are defined, $\mathfrak{i}_{x}, \mathfrak{i}_{y} \leq \mathfrak{i}_{xy}$, and $\mathfrak{i}_{x}\neq \mathfrak{i}_{y}$.
\end{enumerate}
Notice that Condition 3. above is always false (the existence of $\mathfrak{i}_{x}$ and $\mathfrak{i}_{y}$ as in Condition 3. contradicts the definition of $\mathfrak{i}_{xy}$). Hence, $d(x,y) \geq \dfrac{1}{2^{N(k)}}$ if, and only if, Conditions 1. and 2. above are satisfied.
\end{remark}

We finish this section observing that a number $\varepsilon>0$ determines a set of edges in an edge-ordered enumeration of $\p$. More precisely, we have the following definition.

\begin{definition}\label{Fepsilon}
 Let $(X,\sigma)$ be the Deaconu-Renault system associated with a graph $E$ and endow $\mathfrak{p}$ with an edge-ordered enumeration. Given $\varepsilon>0$, define \[F_\varepsilon = \{e\in E^1: e=p_j \in \mathfrak{p}, \text{ for some }  j \text{ such that } \frac{1}{2^j}\geq\varepsilon\}.\] 

\end{definition}

\begin{remark} Notice that $F_\varepsilon$ is always finite.     
\end{remark}

\section{Shadowing for Deaconu-Renault systems associated with graphs.}\label{capivara}

In this section, we characterize the finite shadowing property and the shadowing property, for the Deaconu-Renault system $(X,\sigma)$ associated with a graph, in terms of the existence of specific paths,  as we explain in  Definition~\ref{FPC1} (for the finite case) and in Definitions~\ref{FIPC} and \ref{SIPC}. 

We consider the space $X$ with the metric induced by an edge-ordered enumeration of $\mathfrak{p}$, as described in Definition~\ref{edgeordered} and Remark~\ref{tatuvoador}.

\subsection{Characterization of the finite shadowing property.}

The main goal of this section is to show that the Deaconu-Renault system associated with a graph satisfies the finite shadowing property if, and only if, it satisfies the Finite Path Condition (FPC) (see Definition~\ref{FPC1} and Theorem~\ref{teofiniteshadowing}). In a few words, given $\varepsilon>0$ and a finite set of finite paths $\{\lambda^{i}: 1 \leq i \leq \ell\}$ (which satisfies specific conditions, which depend on $\varepsilon$), the FPC allows us to find a finite path $\lambda$ that ``approximates'' the initial segments of the given paths.  

We start the section with a technical lemma, which says that when $x$ is an infinite path which (under some circumstances) does not $\frac{1}{2^{N(k)}}$-shadows a finite $\frac{1}{2^{N(k)}}$-chain $(x^{n})_{n=1}^{m}$, it is possible to find an index $s \in \mathbb{N}$ such that $x_{s} \neq x_{1}^{s}$.

\begin{lemma}\label{corNkfinite}
Fix $k \in \mathbb{N}$. Let $(x^{n})_{n=1}^{m}$ be a finite $\frac{1}{2^{N(k)}}$-chain of infinite paths. Suppose that there is an infinite sequence $x= x_{1}x_{2}x_{3}\ldots$ in $\p^\infty$, and a natural $n \geq 0$, such that $d(\sigma^{n}(x), x^{n+1}) \geq \dfrac{1}{2^{N(k)}}$. Let $a = \sigma^{n}(x)$, $b = x^{n+1}$ and $\mathfrak{i}_{ab} = \mbox{min}\{i: a_{i} \neq b_{i} \}$. If $n+\mathfrak{i}_{ab} \leq m$, then  $x_{n+\mathfrak{i}_{ab}} \neq x^{n+\mathfrak{i}_{ab}}_{1}$.  
\end{lemma}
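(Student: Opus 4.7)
The plan is to extract structural information from the hypothesis $d(a,b) \geq \dfrac{1}{2^{N(k)}}$ via Remark~\ref{carnaval}, and then propagate this information along the chain using the chain bound together with Proposition~\ref{characteridelta}. To shorten terminology, call an edge $e_{j}$ \emph{small} if $j \leq k$ and \emph{large} otherwise. Remark~\ref{carnaval} forces $\mathfrak{i}_{ab} \leq k$ together with at least one of: (A) $b = x^{n+1}$ has only small edges in positions $1,\ldots,\mathfrak{i}_{ab}$, or (B) $a = \sigma^{n}(x)$ has only small edges in positions $1,\ldots,\mathfrak{i}_{ab}$. I split the argument according to this dichotomy.

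In Case~(A), I would show by induction on $t \in \{0,\ldots,\mathfrak{i}_{ab}-1\}$ that $x^{n+1+t}_{i} = b_{i+t}$ and is small for every $1 \leq i \leq \mathfrak{i}_{ab}-t$. The base case is immediate. For the inductive step the hypothesis makes the first $\mathfrak{i}_{ab}-t-1$ entries of $\sigma(x^{n+1+t})$ small, so $\mathfrak{i}_{\sigma(x^{n+1+t})} \geq \mathfrak{i}_{ab}-t$ whenever defined; applied to the chain bound $d(\sigma(x^{n+1+t}), x^{n+2+t}) < \dfrac{1}{2^{N(k)}}$, both alternatives of Proposition~\ref{characteridelta} then give agreement of $\sigma(x^{n+1+t})$ and $x^{n+2+t}$ at positions $1,\ldots,\mathfrak{i}_{ab}-t-1$. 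At $t = \mathfrak{i}_{ab}-1$ this yields $x^{n+\mathfrak{i}_{ab}}_{1} = b_{\mathfrak{i}_{ab}}$, which by the definition of $\mathfrak{i}_{ab}$ differs from $a_{\mathfrak{i}_{ab}} = x_{n+\mathfrak{i}_{ab}}$.

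In Case~(B), if (A) also holds the previous argument applies, so I assume (A) fails. Then $b_{\mathfrak{i}_{ab}}$ is large and $\mathfrak{i}_{x^{n+1}} = \mathfrak{i}_{ab}$. I would prove by induction on $t$ that $\mathfrak{i}_{x^{n+1+t}} = \mathfrak{i}_{ab}-t$, so the first large edge migrates one position earlier at each chain step. The inductive step uses that the first $\mathfrak{i}_{ab}-t-2$ entries of $\sigma(x^{n+1+t})$ are small while $\sigma(x^{n+1+t})_{\mathfrak{i}_{ab}-t-1}$ is large, so both alternatives of Proposition~\ref{characteridelta} force $\mathfrak{i}_{x^{n+2+t}} = \mathfrak{i}_{ab}-t-1$. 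At $t = \mathfrak{i}_{ab}-1$ this gives $\mathfrak{i}_{x^{n+\mathfrak{i}_{ab}}} = 1$, so $x^{n+\mathfrak{i}_{ab}}_{1}$ is large while $x_{n+\mathfrak{i}_{ab}} = a_{\mathfrak{i}_{ab}}$ is small; the conclusion follows. The hypothesis $n + \mathfrak{i}_{ab} \leq m$ is exactly what keeps both inductions inside the chain $(x^{n})_{n=1}^{m}$.

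The delicate point is the second alternative of Proposition~\ref{characteridelta}, which does not by itself guarantee agreement of edges. What rescues both inductions is that the smallness of the initial segments, propagated step by step, pushes $\mathfrak{i}_{\sigma(x^{n+1+t})}$ far enough that this alternative still delivers agreement at the positions where it is needed; keeping careful track of these indices is the main bookkeeping obstacle.
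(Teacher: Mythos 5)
Your proof is correct and follows essentially the same route as the paper's: the dichotomy you extract from Remark~\ref{carnaval} (one of $a$, $b$ has only edges $e_j$ with $j\leq k$ up to position $\mathfrak{i}_{ab}$) is exactly the paper's case split on whether $\mathfrak{i}_b\leq\mathfrak{i}_{ab}$, and your two inductions propagating either the identity of the prefix or the location of the first large edge down the chain are the same inductive steps the paper carries out, with the same use of Proposition~\ref{characteridelta} at each chain link. Your write-up is merely a bit more explicit about why both alternatives of Proposition~\ref{characteridelta} yield agreement on the needed initial segment; no substantive difference.
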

\begin{proof}
Let $(x^{n})_{n=1}^m$, $x$, $b$ and $a$ be as in the hypothesis. Then, by Remark~\ref{carnaval}, we have that $\mathfrak{i}_{ab} \leq k$ and
 $\mathfrak{i}_{a}$ or $\mathfrak{i}_{b}$ is not defined, or $\mathfrak{i}_{ab} \leq k$ and both $\mathfrak{i}_{a}$ and $\mathfrak{i}_{b}$ are defined with $\mathfrak{i}_{a} > \mathfrak{i}_{ab}$ or $\mathfrak{i}_{b} > \mathfrak{i}_{ab}$.

Recall that, by the definition of $\mathfrak{i}_{ab}$, we have that $x_{n+\mathfrak{i}_{ab}} = \sigma^{n}(x)_{\mathfrak{i}_{ab}} \neq x^{n+1}_{\mathfrak{i}_{ab}}$ and, by hypothesis, we know that $n + \mathfrak{i}_{ab} \leq m$. Next, we split the proof into two parts.

Suppose first that $\mathfrak{i}_{b}$ is defined and $\mathfrak{i}_{b} \leq \mathfrak{i}_{ab}$. Then, either $\mathfrak{i}_{a} > \mathfrak{i}_{ab}$ or $i_{a}$ is not defined. In both cases, we have that $x_{n+\mathfrak{i}_{ab}} =\sigma^{n}(x)_{\mathfrak{i}_{ab}} \in \{e_{\ell}: \ell \leq k\}$. Also, by the minimality of $\mathfrak{i}_{ab}$, and because either $\mathfrak{i}_{b} < \mathfrak{i}_{a}$ or $i_{a}$ is not defined, we obtain that $\mathfrak{i}_{b} = \mathfrak{i}_{ab}$. As $(x^{n})_{n=1}^{m}$ is a finite $\frac{1}{2^{N(k)}}$-chain, $\mathfrak{i}_{ab} \leq k$ and $x^{n+1}_{\mathfrak{i}_{ab}} \in \{e_{j}: j > k\}$, we conclude that  $x^{n+2}_{\mathfrak{i}_{ab}-1} \in \{e_{j}: j > k\}$. Proceeding inductively, we obtain that $x^{n+i}_{\mathfrak{i}_{ab}-i+1} \in \{e_{j}: j > k\}$ for all $1 \leq i \leq \mathfrak{i}_{ab}$. In particular, $x_{1}^{n+\mathfrak{i}_{ab}} \in \{e_{j}: j > k\}$ and hence $x_{n+\mathfrak{i}_{ab}} =\sigma^{n}(x)_{\mathfrak{i}_{ab}} \neq x_{1}^{n+\mathfrak{i}_{ab}}$, as desired.

Next, suppose that $\mathfrak{i}_{b}$ is defined and $\mathfrak{i}_{b} > i_{ab}$ or that $\mathfrak{i}_{b}$ is not defined. Then, $x^{n+1}_{i} \in \{e_{\ell}: \ell \leq k\}$ for all $1 \leq i \leq \mathfrak{i}_{ab}$ and, as $\mathfrak{i}_{ab} \leq k$ and $(x^{n})_{n=1}^{m}$ is a finite $\frac{1}{2^{N(k)}}$-chain, we have that $x^{n+2}_{\mathfrak{i}_{ab}-1} = x^{n+1}_{\mathfrak{i}_{ab}}$. Proceeding inductively, we conclude that the equality $x^{n+i}_{\mathfrak{i}_{ab} - i +1 } = x^{n + 1}_{\mathfrak{i}_{ab}}$ holds for all $1 \leq i \leq \mathfrak{i}_{ab}$. In particular, $x^{n+1}_{\mathfrak{i}_{ab}} = x^{n+\mathfrak{i}_{ab}}_{1}$. Then, since $a_{\mathfrak{i}_{ab}} \neq b_{\mathfrak{i}_{ab}}$, we conclude that $x_{n+\mathfrak{i}_{ab}} \neq x^{n+1}_{\mathfrak{i}_{ab}} = x^{n+\mathfrak{i}_{ab}}_{1}$, and the proof is finished.
\end{proof}

With the above lemma, we can show that it is always possible to shadow a finite $\frac{1}{2^{N(k)}}$-chain $(x^{n})_{n=1}^{m}$ which satisfies $\sigma(x^{n})_{1} = x^{n+1}_{1}$ for all $1 \leq n < m$. This is the content of the proposition below.

\begin{proposition}\label{nkfinitechain}
Fix $k \in \mathbb{N}$, $k \geq 1$. Let $(x^{n})_{n=1}^{m}$ be a finite $\frac{1}{2^{N(k)}}$-chain, formed by infinite paths, which satisfies $x^{n}_{2} = x^{n+1}_{1}$ for all natural $n$ with $1 \leq n < m$. Then, there exists an infinite path $x \in \mathfrak p^\infty$ which satisfies the following two conditions:
\begin{enumerate}
    \item $x_{i} = x^{i}_{1}$, for all $1 \leq i \leq m$;
    \item If $x^{m}_{1} \in F_{\frac{1}{2^{N(k)}}}$, then $x_{m+j} = x^{m}_{j+1}$ for all $0 \leq j < k$.
    \end{enumerate}
Moreover, any infinite path that satisfies the conditions 1 and 2 above $\frac{1}{2^{N(k)}}$-shadows $(x^{n})_{n=1}^{m}$.
\end{proposition}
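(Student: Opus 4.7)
The plan is to first build $x$ directly from conditions 1 and 2, and then verify the shadowing claim by contradiction using Lemma~\ref{corNkfinite}, splitting on whether $x^m_1$ lies in $F_{1/2^{N(k)}}$.

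For existence, I would set $x_i := x^i_1$ for $i = 1,\ldots,m$ (condition 1), and if $x^m_1 \in F_{1/2^{N(k)}}$ additionally set $x_{m+j} := x^m_{j+1}$ for $j = 1,\ldots,k-1$ (condition 2). Consistency of the edge concatenation follows from the hypothesis: $r(x_i) = r(x^i_1) = s(x^i_2) = s(x^{i+1}_1) = s(x_{i+1})$ for $i < m$, and the fact that $x^m$ is itself a path handles the positions set by condition 2. The no-sinks assumption then extends this finite initial segment to an infinite path.

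For the shadowing statement, fix $n \in \{0,\ldots,m-1\}$, put $a := \sigma^n(x)$ and $b := x^{n+1}$, and assume for contradiction that $d(a,b) \geq 1/2^{N(k)}$. Then Remark~\ref{carnaval} gives $\mathfrak{i}_{ab} \leq k$, and Lemma~\ref{corNkfinite} combined with condition 1 forces $n + \mathfrak{i}_{ab} > m$, hence $m - n < \mathfrak{i}_{ab} \leq k$ and $n \in \{m-k+1,\ldots,m-1\}$. If $x^m_1 \in F_{1/2^{N(k)}}$, I would extend the chain by $x^{m+j} := \sigma^j(x^m)$ for $j = 1,\ldots,k-1$; this extended sequence remains a $1/2^{N(k)}$-chain satisfying the hypothesis (consecutive distances vanish and $x^{m+j-1}_2 = x^m_{j+1} = x^{m+j}_1$), and condition 2 is precisely the statement that the extended condition 1 holds. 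Since $n + \mathfrak{i}_{ab} \leq (m-1)+k = m+k-1$, re-applying Lemma~\ref{corNkfinite} to this extended chain produces $x_{n+\mathfrak{i}_{ab}} \neq x^{n+\mathfrak{i}_{ab}}_1$, contradicting the extended condition 1.

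If instead $x^m_1 \notin F_{1/2^{N(k)}}$, so $x^m_1 = e_s$ for some $s > k$, I would establish by reverse induction on $j \in \{n+1,\ldots,m\}$ that $\mathfrak{i}_{x^j} \leq m - j + 1$. The base $j = m$ is immediate. For the step, applying Proposition~\ref{characteridelta} to $d(\sigma(x^j), x^{j+1}) < 1/2^{N(k)}$ yields an edge of index $>k$ in $x^j$ at position $\leq \mathfrak{i}_{x^{j+1}} + 1 \leq m-j+1$: when $\mathfrak{i}_{\sigma(x^j), x^{j+1}} > k$, the equality $x^j_{\mathfrak{i}_{x^{j+1}}+1} = x^{j+1}_{\mathfrak{i}_{x^{j+1}}}$ (valid because $\mathfrak{i}_{x^{j+1}} \leq m-j < k$) supplies it; otherwise, the common first-large-edge index $\mathfrak{i}_{\sigma(x^j)} = \mathfrak{i}_{x^{j+1}}$ gives the same bound. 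Hence $\mathfrak{i}_b = \mathfrak{i}_{x^{n+1}} \leq m-n$, and since $a_{m-n} = x_m = x^m_1$ has index $> k$, also $\mathfrak{i}_a \leq m-n$. Because $a_i = b_i$ for all $i < \mathfrak{i}_{ab}$ and $\mathfrak{i}_a, \mathfrak{i}_b \leq m-n < \mathfrak{i}_{ab}$, the indices coincide, so Proposition~\ref{characteridelta} gives $d(a,b) < 1/2^{N(k)}$, the desired contradiction. The main obstacle I expect is precisely this second case: the chain condition does not propagate exact edge equalities between successive paths, so one must instead track the first position of an edge of index $>k$ through the chain, and Proposition~\ref{characteridelta}'s case with both $\mathfrak{i}_{\sigma(x^j)}$ and $\mathfrak{i}_{x^{j+1}}$ defined and equal is the delicate configuration in the reverse induction.
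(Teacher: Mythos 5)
Your proof is correct, and while it shares the paper's skeleton (build $x$ from Conditions 1 and 2 using $r(x^i_1)=s(x^{i+1}_1)$, then argue by contradiction via Lemma~\ref{corNkfinite} after splitting on whether $n+\mathfrak{i}_{ab}\leq m$), you handle the boundary case $n+\mathfrak{i}_{ab}>m$ quite differently. The paper propagates the identities $x^{n+j}_{[1,\,\mathfrak{i}_{ab}-j]}=x^{n+1}_{[j,\,\mathfrak{i}_{ab}-1]}$ forward along the chain, deduces that $x^m_1\in F_{1/2^{N(k)}}$ necessarily holds, and then pushes either the value $x^{n+1}_{\mathfrak{i}_{ab}}$ or its non-membership in $F_{1/2^{N(k)}}$ down to position $\mathfrak{i}_{ab}-m+n+1$ of $x^m$ to contradict Condition~2 directly. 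You instead split on whether $x^m_1\in F_{1/2^{N(k)}}$: when it is, your extension of the chain by $x^{m+j}:=\sigma^j(x^m)$ (which keeps all hypotheses and turns Condition~2 into Condition~1 for the longer chain) lets you reuse Lemma~\ref{corNkfinite} verbatim with $n+\mathfrak{i}_{ab}\leq m+k-1$ — this is cleaner than the paper's hand propagation and makes transparent why Condition~2 needs exactly $k-1$ extra symbols; when it is not, your reverse induction on the first large-edge index $\mathfrak{i}_{x^j}$ shows $\mathfrak{i}_a=\mathfrak{i}_b\leq m-n<\mathfrak{i}_{ab}$ and contradicts $d(a,b)\geq 1/2^{N(k)}$ via Proposition~\ref{characteridelta} (whereas the paper shows this sub-case cannot occur at all). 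Both routes are sound; yours buys a shorter, more reusable argument for the delicate tail case at the cost of an extra induction in a branch the paper rules out, and the only points worth making explicit in a write-up are the degenerate situation $\sigma(x^j)=x^{j+1}$ in your induction (trivial, since Proposition~\ref{characteridelta} is stated for distinct paths) and the identification $F_{1/2^{N(k)}}\cap E^1=\{e_1,\dots,e_k\}$, which the paper also uses implicitly.
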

\begin{proof}
Let $(x^{n})_{n=1}^{m}$ be a finite $\frac{1}{2^{N(k)}}$-chain according to the hypothesis. Since $x^{n}_{2} = x^{n+1}_{1}$ for all $1 \leq n < m$, we have that $s(x^{n+1}_{1}) = r(x^{n}_{1})$ for all $1 \leq n < m$. This implies the existence of an infinite path $x \in \p^\infty$ which satisfies Conditions 1 and 2 above.

Next, we show that any $x \in \p^\infty$ which satisfies Conditions 1. and 2. $\frac{1}{2^{N(k)}}$-shadows $(x^{n})_{n=1}^{m}$. Suppose that this is not true, that is, assume that there is an infinite path $x \in \p^\infty$ which satisfies conditions 1 and 2, and a natural $0 \leq n < m$ such that $d(\sigma^{n}(x),x^{n+1}) \geq \dfrac{1}{2^{N(k)}}$. Let $a= \sigma^{n}(x)$ and $b = x^{n+1}$.

Suppose that $n+\mathfrak{i}_{ab}\leq m$. Applying Lemma~\ref{corNkfinite} for $a$ and $b$ above, we obtain that $x_{n+\mathfrak{i}_{ab}} \neq x^{n+\mathfrak{i}_{ab}}_{1}$, which contradicts Condition 1.  

Next, suppose that $n+\mathfrak{i}_{ab}> m$. Let $i_{0}:= m - n$ and notice that $i_0 < \mathfrak{i}_{ab}$. By the definition of $\mathfrak{i}_{ab}$, we have that $a_{i_{0}} = b_{i_{0}}$. Moreover, as $d(a,b) \geq \dfrac{1}{2^{N(k)}}$, we obtain that $a_{i}, b_{i} \in F_{\frac{1}{2^{N(k)}}}$ and $a_{i} = b_{i}$ for all $1\leq i < \mathfrak{i_{ab}}$. Since $(x^{n})_{n=1}^{m}$ is a finite $\frac{1}{2^{N(k)}}$-chain, we must have $x^{n+2}_{[1,\ \mathfrak{i}_{ab}-2]} = x^{n+1}_{[2, \ \mathfrak{i}_{ab}-1 ]}$, $x^{n+3}_{[1,\ \mathfrak{i}_{ab}-3]} = x^{n+1}_{[3, \ \mathfrak{i}_{ab}-1 ]}$ and, inductively, $x^{n+j}_{[1,\ \mathfrak{i}_{ab}-j]} = x^{n+1}_{[j, \ \mathfrak{i}_{ab}-1 ]}$ for all $1 \leq j \leq m - n$. In particular, $x^{m}_{1} \in F_{\frac{1}{2^{N(k)}}}$. Now, we split the proof into two cases. First, suppose that $x^{n+1}_{\mathfrak{i}_{ab}} \in F_{\frac{1}{2^{N(k)}}}$. Again, since $(x^{n})_{n=1}^{m}$ is a finite $\frac{1}{2^{N(k)}}$-chain, we obtain that $x^{n+j}_{\mathfrak{i}_{ab}-j+1} = x^{n+1}_{\mathfrak{i}_{ab}}$ for all $1 \leq j \leq m - n$. In particular, $x^{m}_{\mathfrak{i}_{ab}-m+n+1} = x^{n+1}_{\mathfrak{i}_{ab}}$. This means that $x_{m+j_{0}} \neq  x^{m}_{j_{0}+1}$, when $j_{0} = \mathfrak{i}_{ab} - m + n$. As $j_{0} < k$ (because $n < m$ and $\mathfrak{i_{ab}} \leq k$) we have a contradiction with Condition 2. We are left with the case when $x^{n+1}_{\mathfrak{i}_{ab}} \notin F_{\frac{1}{2^{N(k)}}}$. As $d(\sigma^{n}(x), x^{n+1}) \geq \dfrac{1}{2^{N(k)}}$, we must have that $x_{n+\mathfrak{i}_{ab}} \in F_{\frac{1}{2^{N(k)}}}$. But, since $(x^{n})_{n=1}^{m}$ is a finite $\frac{1}{2^{N(k)}}$-chain, we must also have that $x^{n+j}_{\mathfrak{i}_{ab}-j+1} \notin  F_{\frac{1}{2^{N(k)}}}$  for all $1 \leq j \leq m-n$. In particular, $x_{m+j_{0}} \neq  x^{m}_{j_{0}+1}$, for $j_{0} = \mathfrak{i}_{ab} - m + n$. As we already know that $j_{0} < k$, we have again a contradiction Condition 2. 

We proved above that the assumption $d(\sigma^{n}(x),x^{n+1}) \geq \dfrac{1}{2^{N(k)}}$ for some $1 \leq n < m$ leads to a contradiction. Therefore, we have that $d(\sigma^{n}(x),x^{n+1}) < \dfrac{1}{2^{N(k)}}$ for all $1 \leq n < m$, as desired. 
\end{proof}

Now we move to the final preparations to Theorem \ref{teofiniteshadowing}, where we deal with the finite shadowing property. Below we define a function $f$ which will be used throughout the remainder of the section. In particular, this function is used in Definition~\ref{FPC1}, where we introduce the Finite Path Condition (which we show, in Theorem~\ref{teofiniteshadowing}, to be equivalent to the finite shadowing property). Roughly speaking, if we enumerate the entries of the sequences (except the last term of each sequence) of a $\delta-$chain in a
natural way, then the function $f$ gives the position of the entries of the sequences that form the $\delta-$chain. 

\begin{definition}\label{f(in)}
Let $\lambda^{1}, \lambda^{2}, \ldots, \lambda^{\ell}$ be finite paths such that $|\lambda^{\ell}| \geq 1$ and $|\lambda^{i}| \geq 2$ if $i < \ell$. Set $|\lambda^{0}| := 0$ and define  $f:\bigcup_{i=1}^{\ell}\{(i,n)\in \N\times \N: \ 1 \leq n < |\lambda^{i}|\} \cup \{(\ell, |\lambda^{\ell}|)\} \rightarrow \mathbb{N}$ by
\[    f(i,n)= \displaystyle\left(\sum_{j=0}^{i-1}|\lambda^{j}| \right)+ n -i +1.\]
 
\end{definition}

\begin{example}
    Let $\lambda^1 = \lambda^1_1 \lambda^1_2$, $\lambda^2 = \lambda^2_1 \lambda^2_2 \lambda^2_3$ and $\lambda^3 = \lambda^3_1 \lambda^3_2 \lambda^3_3 \lambda^3_4$. Then, $f(1,1)=1, f(2,1)=2, f(2,2)=3, f(3,1)=4, f(3,2)=5, f(3,3)=6,$ and $f(3,4)=7.$
\end{example}

Below we observe that $f$ is injective and describe its image.

\begin{lemma}\label{remoafogado}
Let $\lambda^{1}, \lambda^{2}, \ldots, \lambda^{\ell}$ be finite paths as in Definition~\ref{f(in)} above. Then, $f$ is injective and $Im(f)=\{m\in \N:1 \leq m \leq \displaystyle \left(\sum_{j=0}^{\ell}|\lambda^{j}|\right) - \ell + 1 \}$.
\end{lemma}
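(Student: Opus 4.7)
The strategy is to partition the domain of $f$ into ``slices'' indexed by $i$ and show that $f$ maps each slice bijectively onto a block of consecutive integers, with consecutive blocks juxtaposed end-to-end without gaps or overlaps.

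First, I would fix $i$ with $1 \leq i \leq \ell$ and observe that, for this fixed $i$, the expression
\[
f(i,n) \;=\; \left(\sum_{j=0}^{i-1}|\lambda^{j}|\right) + n - i + 1
\]
is linear in $n$ with slope $1$. Hence $f$ restricted to the $i$-th slice is strictly increasing in $n$, and in particular injective on that slice; its image is the set of consecutive integers from $f(i,1)$ to $f(i,|\lambda^{i}|-1)$, with the sole exception that when $i = \ell$ the top of the range is $f(\ell, |\lambda^{\ell}|)$ (since $n$ is allowed to reach $|\lambda^{\ell}|$ only in the last slice).

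Next, I would verify that consecutive slices glue together along a single integer step. A direct computation gives
\[
f(i, |\lambda^{i}|-1) \;=\; \sum_{j=0}^{i}|\lambda^{j}| - i \qquad \text{and} \qquad f(i+1, 1) \;=\; \sum_{j=0}^{i}|\lambda^{j}| - i + 1
\]
for $1 \leq i < \ell$, so the image of slice $i+1$ starts exactly one unit above the top of slice $i$. Combined with the per-slice injectivity, this implies both that the slice images are pairwise disjoint (hence $f$ is globally injective) and that the union of the images is a single contiguous block of integers.

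Finally, I would identify the endpoints of the resulting block: the minimum value is $f(1,1) = 1$, and the maximum value, occurring at the unique terminal point $(\ell,|\lambda^{\ell}|)$, is
\[
f(\ell, |\lambda^{\ell}|) \;=\; \sum_{j=0}^{\ell}|\lambda^{j}| - \ell + 1,
\]
matching the upper bound in the statement. The main obstacle is purely notational: keeping the sum indices consistent when transitioning between slices and handling the asymmetric inclusion of the terminal point $(\ell, |\lambda^{\ell}|)$; beyond that, the argument reduces to straightforward arithmetic.
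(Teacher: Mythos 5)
Your proposal is correct and follows essentially the same route as the paper: both rest on the computation that $f$ increases by exactly one along each slice $\{(i,n): 1\leq n<|\lambda^i|\}$ and that $f(i+1,1)=f(i,|\lambda^i|-1)+1$, so the slices map onto adjacent blocks of integers. The only (cosmetic) difference is at the end: you read off surjectivity onto $\{1,\ldots,\sum_{j}|\lambda^j|-\ell+1\}$ directly from the contiguity of the blocks, whereas the paper deduces it by counting, comparing $\#Im(f)=\#Dom(f)$ with the extreme values $f(1,1)=1$ and $f(\ell,|\lambda^\ell|)$.
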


\begin{proof}
First we prove that $f(i,n) < f(i+1,1)$ for all $1 \leq i < \ell$ and $1 \leq n < |\lambda^{i}|$  (notice that this implies that $f(i,n) < f(j,1)$ for all $1 \leq i<j\leq \ell$ and $1 \leq n < |\lambda^{i}|$).  Indeed,
\begin{align*}
    f(i,n) & =   n -i +1+\displaystyle\sum_{j=0}^{i-1}| \lambda^{j}| <   \displaystyle |\lambda^{i}| -i +1 +\sum_{j=0}^{i-1}|\lambda^{j}|  = \\
           & = \displaystyle  1 - (i+1) + 1 + \sum_{j=0}^{i}|\lambda^{j}| 
           = f(i+1,1).
\end{align*}
Now, let $(i_{0},n_{0})$ and $(i_{1},n_{1})$ be pairs such that
\begin{equation}\label{eqf}
    f(i_{0},n_{0})=f(i_{1},n_{1})
\end{equation}
If $i_{0} \neq i_{1}$, say $i_{0}<i_{1}$, then $f(i_{0},n_{0})  <  f(i_{1},1) \leq f(i_{1},n_{1})$, which contradicts equation (\ref{eqf}). Hence $i_{0} = i_{1}$ and, by the definition of $f$ and equation (\ref{eqf}), we obtain that $n_{0} = n_{1}$. Thus, $f$ is injective and we have the following equalities: \[\#Im(f) = \#Dom(f) = \displaystyle \left( \sum_{j=0}^{\ell}|\lambda^{j}| \right)- \ell + 1.\] Since $f(1,1) = 1$ and
\[f(\ell, |\lambda^{\ell}|) = \displaystyle \left(\sum_{j=0}^{\ell}|\lambda^{j}|\right)  - \ell + 1 = \mbox{max}\{f(i,n): (i,n) \in Dom(f)\}, \]
we conclude that for each $1 \leq m \leq \displaystyle\left( \sum_{j=0}^{\ell}|\lambda^{j}|\right) - \ell + 1$, there is a unique pair $(i,n) \in Dom(f)$ such that $f(i,n)=m$, as we wanted. 

\end{proof}

Next, we describe a condition that is equivalent to the finite shadowing property (see Theorem \ref{teofiniteshadowing}). The idea of this condition is that given an $\varepsilon>0$, there is a $\delta>0$ such that for any set of finite paths $\lambda^{1}, \lambda^{2}, \ldots, \lambda^{\ell}$, we can find a finite path $\lambda$ such that, for each $1 \leq j \leq |\lambda| = f(\ell, |\lambda^{\ell}|)$, we have $\lambda_{j} = \lambda^{i}_{n}$ if $\lambda^{i}_{n} \in F_{\varepsilon}$ and $\lambda_{j} \notin F_{\varepsilon}$ if $\lambda^{i}_{n} \notin F_{\varepsilon}$, where $(i,n)$ is the unique pair such that $f(i,n)=j$. Roughly, $\lambda_{j}$ needs to be close to $\lambda^{i}_{n}$ when $j = f(i,n)$. 

\begin{definition}\label{FPC1}\textbf{(Finite Path Condition - FPC)} The Deaconu-Renault system $(X,\sigma)$ associated with a graph satisfies the Finite Path Condition if, for all $\varepsilon > 0$, there is $0 < \delta \leq \varepsilon$ such that for all finite set $\lambda^{1}, \lambda^{2}, \ldots, \lambda^{\ell}$ of finite paths which satisfy:
\begin{enumerate}
    \item $|\lambda^{i}| \geq 2$ for $ 1 \leq i \leq \ell$;
    \item $(F_{\delta})^{c} \cap \left\{\{\lambda^{i}_{j}: \  1 \leq i\leq \ell, \ 1 \leq j \leq |\lambda^{i}| \}\setminus \{\lambda^{1}_{1}\}\right\}= \{\lambda^{i+1}_{1}: 1 \leq i < \ell\} \cup \{\lambda^{i}_{|\lambda^{i}|}: 1 \leq i < \ell\}$,
\end{enumerate}
    there is a finite path $\lambda$ such that $\displaystyle |\lambda| = \left(\sum_{i=1}^{\ell}|\lambda^{i}|\right)-\ell + 1$
     and such that, for all $i,n$ for which $(i,n)\in Dom(f)$ (see Definition~\ref{f(in)}), the following conditions are satisfied:
     
\begin{enumerate}[a.]
        \item If $\lambda_{n}^{i} \in F_{\varepsilon}$, then $\lambda_{f(i,n)} = \lambda^{i}_{n}$;
        \item If $\lambda_{n}^{i} \notin F_{\varepsilon}$, then $\lambda_{f(i,n)} \notin F_{\varepsilon}$.
\end{enumerate}

\end{definition}

Before we prove that the Finite Path Condition defined above is equivalent to the finite shadowing property, we need one last auxiliary result.


\begin{lemma}\label{lemmafinfinite}
Let $\varepsilon>0$ and $0<\delta\leq \varepsilon$. Suppose that $\lambda^{1}, \lambda^{2}, \ldots, \lambda^{\ell}$ are finite paths which satisfy Conditions~1. and 2. of Definition~\ref{FPC1}. For each $1 \leq i \leq \ell$, let $y^{i}$ be an infinite path such that $y^{i}_{k} = \lambda^{i}_{k}$ for all $1 \leq k \leq |\lambda^{i}|$ and, 
for each $i,n$ such that $(i,n)\in Dom(f)$, let $$x^{f(i,n)} := \sigma^{n-1}(y^{i}).$$
Then, $(x^{f(i,n)})_{f(i,n)=1}^{f(\ell,|\lambda^{\ell}|)}$ is a finite $\delta$-chain and every infinite path $x \in X$  which $\varepsilon$-shadows such chain satisfies:
\begin{enumerate}[a.]
    \item If $\lambda_{n}^{i} \in F_{\varepsilon}$, then $x_{f(i,n)} = \lambda^{i}_{n}$, for each $i,n$ such that $(i,n)\in Dom(f)$;
        \item If $\lambda_{n}^{i} \notin F_{\varepsilon}$, then $x_{f(i,n)} \notin F_{\varepsilon}$, for each $i,n$ such that $(i,n)\in Dom(f)$. 
\end{enumerate}

\end{lemma}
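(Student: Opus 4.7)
My plan is to handle the two assertions in the lemma separately: first, verifying that the sequence $(x^{f(i,n)})$ is a finite $\delta$-chain, and second, showing that any infinite path $\varepsilon$-shadowing it satisfies the closeness conditions (a) and (b).

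For the chain property, I would check $d(\sigma(x^m), x^{m+1}) < \delta$ for each pair of consecutive indices $m, m+1$ in the image of $f$. From the proof of Lemma~\ref{remoafogado}, these consecutive pairs are either of the form $f(i,n), f(i,n+1)$ (same block) or $f(i, |\lambda^i|-1), f(i+1,1)$ (block transition). The same-block case is immediate: $x^{f(i,n+1)} = \sigma^n(y^i) = \sigma(\sigma^{n-1}(y^i)) = \sigma(x^{f(i,n)})$, so the distance is zero. At a block transition, we compare $\sigma^{|\lambda^i|-1}(y^i)$ with $y^{i+1}$, whose respective first edges $\lambda^i_{|\lambda^i|}$ and $\lambda^{i+1}_1$ both lie outside $F_\delta$ by Condition~2 of FPC; i.e., both edges occupy enumeration positions with $1/2^{\mathrm{position}} < \delta$. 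Choosing $k$ so that $1/2^{N(k)}$ sits just below $\delta$, I would unpack the identification $F_{1/2^{N(k)}} = \{e_1, \ldots, e_k\}$ from Definition~\ref{pnk}, so that both paths have $\mathfrak{i}_a = \mathfrak{i}_b = 1 \leq \mathfrak{i}_{ab}$ in the notation of Proposition~\ref{characteridelta}, yielding $d < 1/2^{N(k)} \leq \delta$.

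For the shadowing conclusion, fix an infinite path $x \in X$ that $\varepsilon$-shadows the chain, and any $(i,n) \in Dom(f)$. Setting $m = f(i,n)$, the shadowing inequality reads $d(\sigma^{m-1}(x), \sigma^{n-1}(y^i)) < \varepsilon$; the first edges of these two infinite paths are respectively $x_m$ and $y^i_n = \lambda^i_n$. If (a) fails, i.e.\ $\lambda^i_n \in F_\varepsilon$ but $x_m \neq \lambda^i_n$, then the length-one path $\lambda^i_n = p_j$ appears at enumeration index $j$ with $1/2^j \geq \varepsilon$ and is an initial segment of $\sigma^{n-1}(y^i)$ but not of $\sigma^{m-1}(x)$, forcing $d \geq 1/2^j \geq \varepsilon$, a contradiction. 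The argument for (b) is symmetric: if $\lambda^i_n \notin F_\varepsilon$ but $x_m \in F_\varepsilon$, then $x_m = p_{j'}$ with $1/2^{j'} \geq \varepsilon$ gives the distinguishing length-one initial segment from the opposite side, again contradicting shadowing.

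The main obstacle I anticipate lies in the block-transition estimate of part~(1): one must carefully rule out that some earlier entry of the edge-ordered enumeration, in particular a source vertex, distinguishes $\sigma^{|\lambda^i|-1}(y^i)$ from $y^{i+1}$ before their first edges do. This is exactly the scenario Proposition~\ref{characteridelta} is designed to cover, so the key technical step is to verify that its hypotheses apply after choosing $k$ in terms of $\delta$, and that the interpretation of $\mathfrak{i}_a = \mathfrak{i}_b = 1$ is consistent with the information extracted from Condition~2 that both transition edges lie outside $F_\delta$.
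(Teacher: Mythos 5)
Your proposal follows the same architecture as the paper's proof: the same-block computation $\sigma(x^{f(i,n)})=x^{f(i,n+1)}$ (which, as you note, also covers the final pair $f(\ell,|\lambda^{\ell}|-1)$, $f(\ell,|\lambda^{\ell}|)$), the block-transition case handled via Condition~2 of Definition~\ref{FPC1}, and the shadowing conclusion read off by comparing the first edges of $\sigma^{f(i,n)-1}(x)$ and $x^{f(i,n)}$. The same-block case and your arguments for items a.\ and b.\ coincide with the paper's and are correct.

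The one place you deviate is the block-transition estimate, and there your route has a genuine gap. You need a single $k$ with two properties at once: every edge outside $F_{\delta}$ must have index $>k$ (so that $\mathfrak{i}_{a}=\mathfrak{i}_{b}=1$ in the sense of Proposition~\ref{characteridelta}), and $\frac{1}{2^{N(k)}}\leq\delta$ (so that the conclusion $d<\frac{1}{2^{N(k)}}$ actually yields $d<\delta$). These pull in opposite directions and need not be simultaneously satisfiable: the first forces $k\leq k_{\delta}$, where $k_{\delta}$ is the largest index with $e_{k_{\delta}}\in F_{\delta}$, while if $e_{k_{\delta}+1}=p_{m}$ then $N(k_{\delta})<m$ and $\frac{1}{2^{m}}<\delta$ only bound $\frac{1}{2^{N(k_{\delta})}}$ by a constant multiple of $\delta$, not by $\delta$ itself; since $\delta$ is arbitrary and the numbers $\frac{1}{2^{N(k)}}$ form a discrete set, for many $\delta$ no admissible $k$ exists. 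The paper avoids $N(k)$ entirely at this step and argues directly: since $a_{1}=\lambda^{i}_{|\lambda^{i}|}$ and $b_{1}=\lambda^{i+1}_{1}$ both lie outside $F_{\delta}$, any finite path of positive length that is an initial segment of one of $a,b$ but not the other begins with $a_{1}$ or $b_{1}$, hence occupies an enumeration position $j$ with $\frac{1}{2^{j}}<\delta$, giving $d(a,b)<\delta$. (Your worry that a source vertex might distinguish $a$ from $b$ at an earlier position is a legitimate subtlety, but the paper's own argument, and the proof of Proposition~\ref{characteridelta} itself, pass over it in exactly the same way; routing through Proposition~\ref{characteridelta} does not resolve it and only introduces the quantitative mismatch above.)
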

\begin{proof}

 We begin proving that, for each $f(i,n)$, with $1 \leq f(i,n) \leq \displaystyle f(\ell,|\lambda^{\ell}|)$, we have that $d\left(\sigma(x^{f(i,n)}), \ x^{f(i,n)+1}\right)<\delta$. 
 
 First, suppose that $1 \leq i < \ell$ and $n=|\lambda^{i}|-1$. Then, we have that
 \begin{align*}
     f(i,|\lambda^{i}|-1)+1 & = \displaystyle \left( \sum_{j=0}^{i-1}|\lambda^{j}|\right) + |\lambda^{i}|-1 - i + 1 + 1\\
     & = \displaystyle\left(\sum_{j=0}^{i}|\lambda^{j}|\right) - i -1 + 1 + 1\\ 
     & = f(i+1,1).
 \end{align*}
 Therefore, $\sigma(x^{f(i, \ |\lambda^{i}|-1)})_{1} = \sigma(\sigma^{|\lambda^{i}|-1}(y^{i}))_{1}=\sigma^{|\lambda^{i}|-1}(y^{i})_{1} = y^{i}_{|\lambda^{i}|} \in (F_{\delta})^{c}$ and, moreover, $x^{f(i,\ |\lambda^{i}|-1)+1}_{1} = x^{f(i+1,\ 1)}_{1} = y^{i+1}_{1} \in (F_{\delta})^{c}$. From this, we conclude that $d\left(\sigma(x^{f(i,n)}), \ x^{f(i,n)+1}\right)<\delta$. 
 
Next, suppose that $1 \leq i \leq \ell$ and $1 \leq n < |\lambda^{i}|-1$. Then, 
 \begin{align*}
     f(i,n)+1 & = \displaystyle\left(\sum_{j=0}^{i-1}|\lambda^{j}|\right) + n - i + 1 + 1\\
     & = \displaystyle\left(\sum_{j=0}^{i-1}|\lambda^{j}|\right)+ n +1 -i + 1\\ 
     & = f(i,n+1).
 \end{align*}
 Hence, $\sigma(x^{f(i,n)})=\sigma(\sigma^{n-1}(y^{i})) = \sigma^{n}(y^{i}) = x^{f(i,\ n+1)}$, and so $d\left(\sigma(x^{f(i,n)}),\ x^{f(i,\ n+1)}\right)< \delta$.
 
 For the third and final case, suppose that $i=\ell$ and $n = |\lambda^{\ell}|-1$. As
 \begin{align*}
     f(\ell,|\lambda^{\ell}|-1)+1 & = \displaystyle\left(\sum_{j=0}^{\ell-1}|\lambda^{j}| \right)+ |\lambda^{\ell}|-1 - \ell + 1 + 1\\
     & = \displaystyle\left(\sum_{j=0}^{\ell}|\lambda^{j}|\right)  -\ell + 1\\ 
     & = f(\ell,|\lambda^{\ell}|),
 \end{align*}
 we have that $\sigma(x^{f(\ell, \ |\lambda^{\ell}|-1)}) = \sigma(\sigma^{|\lambda^{\ell}|-2}(y^{\ell}))=\sigma^{|\lambda^{\ell}|-1}(y^{\ell})=x^{f(\ell,\ |\lambda^{\ell}|)}=x^{f(\ell,\ |\lambda^{\ell}|-1)+1}$. Hence, $d\left(\sigma(x^{f(\ell,\ |\lambda^{\ell}|-1)}), \ x^{f(\ell,\ |\lambda^{\ell}|-1)+1}\right)<\delta$.
 
 Summarizing the above, we have proved that $(x^{f(i,n)})_{f(i,n)=1}^{f(\ell,|\lambda^{\ell}|)}$, is a $\delta$-chain. 
 
 For the last part, suppose that $x \in X$ is an infinite path which $\varepsilon$-shadows $(x^{f(i,n)})_{f(i,n)=1}^{f(\ell,|\lambda^{\ell}|)}$. Then, $d\left(\sigma^{f(i,n)-1}(x),x^{f(i,n)}\right) < \varepsilon$, for all $1 \leq f(i,n) \leq f(\ell, |\lambda^{\ell}|)$. Notice that $x^{f(i,n)}_{1} = \sigma^{n-1}(y^{i})_{1} = y^{i}_{n} = \lambda^{i}_{n}$. Hence, if $\lambda^{i}_{n} \in F_{\varepsilon}$ we must have that $\sigma^{f(i,n)-1}(x)_{1} = \lambda^{i}_{n}$ (because $d\left(\sigma^{f(i,n)-1}(x),x^{f(i,n)}\right) < \varepsilon$ and $x^{f(i,n)}_{1} = \lambda^{i}_{n}$). But $\sigma^{f(i,n)-1}(x)_{1} = x_{f(i,n)}$ and so item a. follows. On the other hand, if $\lambda^{i}_{n} \notin F_{\varepsilon}$ then we must have that $\sigma^{f(i,n)-1}(x)_{1} \notin F_{\varepsilon}$. Hence item b., and the lemma, are proved.
\end{proof}

We now show that the finite shadowing property is equivalent to the finite path condition.

\begin{theorem}\label{teofiniteshadowing}
Let $(X,\sigma)$ be the Deaconu-Renault system associated with a graph. Then, $X$ has the finite shadowing property if, and only if, it satisfies the Finite Path Condition (Definition \ref{FPC1}).
\end{theorem}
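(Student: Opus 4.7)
The plan is to prove both directions by leveraging Lemma~\ref{lemmafinfinite} as the central technical tool, combined with a decomposition of $\delta$-chains into ``segments'' separated by edges lying outside $F_\delta$.

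For the forward direction (finite shadowing $\Rightarrow$ FPC), fix $\varepsilon>0$ and let $\delta \leq \varepsilon$ witness the finite shadowing property. Given paths $\lambda^1, \ldots, \lambda^\ell$ satisfying conditions 1 and 2 of Definition~\ref{FPC1}, the no-sinks assumption allows us to extend each $\lambda^i$ to an infinite path $y^i$. By Lemma~\ref{lemmafinfinite}, the sequence $x^{f(i,n)} := \sigma^{n-1}(y^i)$ is a finite $\delta$-chain of infinite paths, and the finite shadowing property produces a point $x$ that $\varepsilon$-shadows it; by the equivalence between shadowing on $X$ and on $\mathfrak{p}^\infty$ stated just before the theorem, one may take $x$ to be an infinite path. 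The second half of Lemma~\ref{lemmafinfinite} then yields that $\lambda := x_1 x_2 \cdots x_{f(\ell, |\lambda^\ell|)}$ satisfies conditions a and b of FPC.

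For the backward direction (FPC $\Rightarrow$ finite shadowing), fix $\varepsilon>0$ and obtain from FPC a $\delta \leq \varepsilon$ which I arrange to be of the form $\delta = 1/2^{N(k)}$. Given a finite $\delta$-chain $(x^n)_{n=1}^m$ of infinite paths, I would decompose the chain at ``breaks'': indices $n \in \{1,\ldots,m-1\}$ with $x^{n+1}_1 \notin F_\delta$. By Proposition~\ref{characteridelta}, a break also forces $x^n_2 \notin F_\delta$, and every chain-break ($x^n_2 \neq x^{n+1}_1$) is a break in this sense. For each segment $i$ delimited by consecutive breaks $n_{i-1} < n_i$, define $\lambda^i$ by concatenating the first edges $x^{n_{i-1}+1}_1, \ldots, x^{n_i}_1$ and, for $i < \ell$, appending the junction edge $x^{n_i}_2$. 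A direct computation verifies conditions 1 and 2 of Definition~\ref{FPC1}, since interior edges $\lambda^i_n$ for $1 < n < |\lambda^i|$ correspond to first edges of chain entries strictly between consecutive breaks, and by the definition of break such first edges lie in $F_\delta$. Applying FPC yields a finite path $\lambda$ of length $m$, which I extend via the no-sinks assumption to an infinite path $x$. Finally, to verify that $x$ $\varepsilon$-shadows $(x^n)$, for each $n$ I would show that within the segment containing $n$ the FPC's conditions a and b together with the smoothness of the chain propagate enough agreement that either Case A or Case B of Proposition~\ref{characteridelta} applies to yield $d(\sigma^{n-1}(x), x^n) < \varepsilon$.

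The main obstacle lies in the backward direction's handling of endpoint cases, particularly when $x^m_1 \notin F_\delta$: the straightforward decomposition would force $|\lambda^\ell| = 1$ (violating condition 1) or leave $\lambda^\ell_{|\lambda^\ell|}$ outside $F_\delta$ (violating condition 2). I expect to resolve this by modifying the decomposition near the end of the chain, for instance by merging the last two segments when the last break occurs at position $m-1$, or by truncating the FPC application to the first $m-1$ entries and handling the final position separately through the flexibility of the metric at non-$F_\varepsilon$ edges (by Proposition~\ref{characteridelta}, if $x^m_1 \notin F_\varepsilon$ then any $x_m \notin F_\varepsilon$ with the correct source suffices for the last position of the shadow). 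A secondary subtlety is the shadow verification itself: propagating FPC's position-by-position match on first edges into agreement at deeper positions of each chain entry $x^n$ requires care, and hinges on showing that within a smooth segment the equality $x^n_i = x^{n+i-1}_1$ holds at least up to the first non-$F_\varepsilon$ position encountered.
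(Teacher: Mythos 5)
Your overall strategy coincides with the paper's. The forward direction via Lemma~\ref{lemmafinfinite} is exactly the paper's argument, and for the converse the paper uses precisely your decomposition of the chain at the indices where the first edge leaves $F_\delta$ (its $\lambda^i=x^{n_{i-1}}_1\cdots x^{n_i-1}_1\sigma(x^{n_i-1})_1$ is your segment after an index shift), followed by the same case analysis through Proposition~\ref{characteridelta}. You also correctly flag the degenerate case $|\lambda^{\ell}|=1$ when $x^m_1\notin F_\delta$, which the paper does not discuss; your proposed repairs for that are reasonable.

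The genuine gap is in how you pass from the finite path $\lambda$ to an infinite shadowing point: you take an arbitrary extension granted by the no-sinks hypothesis, but the paper must, and does in Equation~(\ref{equationx}), constrain the extension to follow $x^m$ for $k$ further steps whenever $x^m_1\in F_{\varepsilon'}$, mirroring Condition~2 of Proposition~\ref{nkfinitechain}. This constraint is what rescues the chain entries near the end: for $s$ close to $m$ the first disagreement index $\mathfrak{i}_{ab}$ of $a=\sigma^{s}(x)$ and $b=x^{s+1}$ can exceed $m-s$, so the propagated identities $x^{s+1}_{j}=x^{s+j}_{1}$ run off the end of the chain and must be continued as $x^{s+1}_{j}=x^{m}_{j-(m-s-1)}$; matching these against $x$ forces $x_{m+j}=x^{m}_{j+1}$ for $0\le j\le k$. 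With an arbitrary extension, already for $s=m-1$ one can have $a_1=b_1\in F_{\varepsilon'}$ while $a_2=x_{m+1}\neq x^m_2=b_2$ with $b_2\in\{e_1,\dots,e_k\}$; then Proposition~\ref{characteridelta} and Remark~\ref{carnaval} give $d(a,b)\geq 1/2^{N(k)}$, and in fact $d(a,b)=1/2^{j}$ for the (possibly very early) enumeration index $j$ of the distinguishing path $b_1b_2$, which can exceed the prescribed $\varepsilon$. Your ``obstacle'' paragraph only treats the endpoint when $x^m_1\notin F_\delta$ and misses this complementary constraint, so the verification as planned would fail on the tail of the chain. A smaller point: it is not clear that the $\delta$ supplied by the Finite Path Condition can be shrunk to the form $1/2^{N(k)}$, since Condition~2 of Definition~\ref{FPC1} is not monotone in $\delta$; the paper sidesteps this by keeping the FPC's $\delta$ as given and introducing an auxiliary radius $\varepsilon'<1/2^{N(k)}\leq\varepsilon$ for the shadowing estimate instead.
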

\begin{proof}
Suppose that $(X,\sigma)$ has the finite shadowing property. Given $\varepsilon > 0$, let $0<\delta \leq \varepsilon$ be such that every finite $\delta$-chain is $\varepsilon$-shadowed by an infinite path $x \in X$ (notice that since we assume that our graphs have no sinks, every finite path can be extended to an infinite path). Let $\lambda^{1}, \lambda^{2}, \ldots, \lambda^{\ell}$ be finite paths which satisfy Conditions~1. and 2. of Definition \ref{FPC1} and construct a $\delta$-chain, $(x^{f(i,n)})_{f(i,n)=1}^{f(\ell,|\lambda^{\ell}|)}$, as described in Lemma~\ref{lemmafinfinite}. 
Let $x \in X$ be an infinite path that $\varepsilon$-shadows it. Define the finite path $\lambda$, of length $|\lambda| = f(\ell,|\lambda^{\ell}|)$, by $\lambda_{j}:=x_{j}$, for all $1 \leq j \leq f(\ell,|\lambda^{\ell}|)$. By Lemma~\ref{lemmafinfinite}, such finite path satisfies items a. and b. of Definition~\ref{FPC1}. Therefore, $X$ satisfies the Finite Path Condition and we have finished the first part of the proof.

For the converse, suppose that $(X,\sigma)$  satisfies the Finite Path Condition. Fix $\varepsilon > 0$. Let $k$ be a natural such that $\frac{1}{2^{N(k)}} \leq \varepsilon$ (where $N(k)$ is given in Definition \ref{pnk}) and let $\varepsilon'$ be a real number such that $\varepsilon'<\frac{1}{2^{N(k)}}$. Let $\delta > 0$ be the real number of the Finite Path Condition (related to $\varepsilon'$) and let $(x^{n})_{n=1}^{m}$ be a finite $\delta$-chain.

Suppose that $x_{1}^{n} \in F_{\delta}$ for all $n \geq 2$. In this case, we must have that $\sigma(x^{n})_{1} = x^{n+1}_{1}$ for all $1 \leq n < m$, since $(x^{n})_{n=1}^{m}$ is a finite $\delta$-chain. As $(x^{n})_{n=1}^{m}$ is also a $\frac{1}{2^{N(k)}}$-chain, by Proposition~\ref{nkfinitechain}, we obtain an infinite path $x \in X$ that $\frac{1}{2^{N(k)}}$-shadows such chain and hence also $\varepsilon$-shadows it (since $\frac{1}{2^{N(k)}} \leq \varepsilon$). 

Next, assume that there is an $n \geq 2$ such that $x^{n}_{1} \notin F_{\delta}$. Consider the set 
\begin{equation}
A_{\delta} = \{n \geq 2: x_{1}^{n} \notin F_{\delta}\} = \{n_{1}, n_{2}, \ldots, n_{\ell-1}\}.    
\end{equation}
Notice that $\sigma(x^{n_{i}-1})_{1} \notin F_{\delta}$ for $1 \leq i \leq \ell-1$ (because $d\left(\sigma(x^{n_{i}-1}), x^{n_{i}})\right)< \delta$ and $x^{n_{i}}_{1} \notin F_{\delta}$). Also, if $n \geq 2$ and $n \notin A_{\delta}$, then $\sigma(x^{n-1})_{1}= x^{n}_{1}$ (because $x^{n}_{1} \in F_{\delta}$ and $d\left(\sigma(x^{n-1}),x^{n}\right) < \delta$). Given this information, making $n_{0}:=1$, define the finite paths $\lambda^{1},\lambda^{2},\ldots,\lambda^{\ell}$ by:
\begin{equation}\label{lambdaix}
\lambda^{i}=
		\left\{
		\begin{array}{ll}
		x^{n_{i-1}}_{1}x^{n_{i-1}+1}_{1}\ldots x^{n_{i}-1}_{1}\sigma(x^{n_{i}-1})_{1}, & \text{if $1\leq i<\ell$};\\
		x^{n_{\ell-1}}_{1}x^{n_{\ell-1}+1}_{1}\ldots x^{m}_{1}, & \text{if $i=\ell$}.\\
				\end{array}
		\right.    
\end{equation}

 Notice that:
\begin{equation}\label{|lambdaix|}
|\lambda^{i}|=
		\left\{
		\begin{array}{ll}
		n_{i}-n_{i-1}+1, & \text{if $1\leq i<\ell$};\\
		m- n_{\ell-1}+1, & \text{if $i=\ell$}.\\
				\end{array}
		\right.    
\end{equation}
By (\ref{lambdaix}) and (\ref{|lambdaix|}) above, and Definition \ref{f(in)}, we have that (for $i,n$ such that $(i,n)\in Dom (f)$):
\begin{equation}\label{f(inni)}
f(i,n)=\displaystyle\left(\sum_{j=0}^{i-1}|\lambda^{j}|\right) + n -i +1= n_{i-1}+n-1.
		\end{equation}
Now, by (\ref{lambdaix}) and (\ref{f(inni)}) above, we have that (also for $i,n$ such that $(i,n)\in Dom (f)$): 
\begin{equation}\label{lambdaxf}
    \lambda^{i}_{n}=x^{n_{i-1}+n-1}_{1}=x^{f(i,n)}_{1}.
\end{equation}

Notice that the finite paths $\lambda^{1},\lambda^{2},\ldots, \lambda^{\ell}$ satisfy the first and second conditions of Definition~\ref{FPC1}. 
Then, as $(X,\sigma)$ satisfies the Finite Path Condition (Definition \ref{FPC1}), there exists a finite path $\lambda$ such that
$|\lambda| = \displaystyle\left(\sum_{i=1}^{\ell}|\lambda^{i}|\right) -\ell + 1 = m$ and conditions a. and b. of the Finite Path Condition (Definition~\ref{FPC1}) are satisfied.

Let $x \in X$ be an infinite path such that 
\begin{equation}\label{equationx}
    x_{[1,\ m]} = \lambda \ \text{and, if} \ x_{1}^{m} \in F_{\varepsilon'} \ \text{then} \ x_{m+j}=\sigma^{j}(x^{m})_{1} = x^{m}_{j+1} \ \forall \ 0 \leq j \leq k.
\end{equation} 

Notice that, by (\ref{lambdaxf}), (\ref{equationx})  and item a. of the Finite Path Condition we have that:
\begin{equation}\label{xlambdaxi}
    x_{f(i,n)}=\lambda_{f(i,n)}=\lambda^{i}_{n}=x_{1}^{f(i,n)}, \  \text{if $\lambda_{n}^{i} \in F_{\varepsilon'}$, }
\end{equation}
Moreover, by Equation (\ref{xlambdaxi}), we have that $x_{s} = x^{s}_{1}$ for all $1 \leq s \leq f(\ell,|\lambda^{\ell}|)$ such that $s = f(i,n)$ and $\lambda_{n}^{i} \in F_{\varepsilon'}$.
Also, by (\ref{lambdaxf}), (\ref{equationx})  and item b. of the Finite Path Condition we have that $x_{s} \notin F_{\varepsilon'}$ if $x^{s}_{1} \notin F_{\varepsilon'}$, for all $1 \leq s \leq f(\ell,|\lambda^{\ell}|)$.

Finally, we prove that $x$ $\varepsilon'$-shadows $(x^{n})_{n=1}^{m}$ (and hence also $\varepsilon$-shadows, since $\varepsilon' \leq \varepsilon$). Fix a positive integer $0 \leq s < m$ and consider $a:=\sigma^{s}(x)$ and $b:=x^{s+1}$. Next, we split the proof into a few cases and check that $d(a,b)< \varepsilon'$ in all of them.

If $a=b$ then clearly $d(a,b) < \varepsilon'$. Suppose that $a \neq b$. Recall that  $\mathfrak{i}_{ab}= \mbox{min}\{i: a_{i} \neq b_{i}\}$. If $\mathfrak{i}_{ab} > k$, then $d(a,b) < \varepsilon'$, see Definition~\ref{pnk} and Proposition~\ref{characteridelta} for further details. So, suppose that $\mathfrak{i}_{ab} \leq k$. If both $a_{\mathfrak{i}_{ab}}$ and $b_{\mathfrak{i}_{ab}}$ do not belong to $F_{\varepsilon'}$, then we have that $d(a,b)< \varepsilon'$ (because any finite path which is an initial segment of $a$ or $b$, but not the other, must contain an edge which is not in $F_{\varepsilon'}$). We are left with the case where  $b_{\mathfrak{i}_{ab}} \in F_{\varepsilon'}$ and with the case where $a_{\mathfrak{i}_{ab}} \in F_{\varepsilon'}$.  In both situations, to conclude that $d(a,b) < \varepsilon'$, we must prove that there is $1 \leq i < \mathfrak{i}_{ab}$ such that $a_{i}=b_{i} \notin F_{\varepsilon'}$.

Suppose that $x^{s+1}_{\mathfrak{i}_{ab}}=b_{\mathfrak{i}_{ab}} \in F_{\varepsilon'}$. We also assume that $s+\mathfrak{i}_{ab}\leq m$. Suppose that for all $1 \leq i < \mathfrak{i}_{ab}$ we have $a_{i}=b_{i} \in F_{\varepsilon'}$. As $(x^{n})_{n=1}^{m}$ is a $\varepsilon'$-chain, we must have $x^{s+j}_{1} = x^{s+1}_{j}$ for all $1 \leq j \leq \mathfrak{i}_{ab}$. In particular, $x^{s+\mathfrak{i}_{ab}}_{1} = x^{s+1}_{\mathfrak{i}_{ab}} = b_{\mathfrak{i}_{ab}} \in F_{\varepsilon'}$. Then, $a_{\mathfrak{i}_{ab}} = x_{s+\mathfrak{i}_{ab}} \neq x^{s+\mathfrak{i}_{ab}}_{1} \in F_{\varepsilon'}$, which contradicts Equation (\ref{xlambdaxi}). Hence, there exists an index $i$, $1 \leq i < \mathfrak{i}_{ab}$, such that $a_{i}=b_{i} \notin F_{\varepsilon'}$ and so $d(a,b)< \varepsilon'$. Now, we assume that $s+\mathfrak{i}_{ab} > m$. Suppose that for all $1 \leq i < \mathfrak{i}_{ab}$ we have $a_{i}=b_{i} \in F_{\varepsilon'}$. As $(x^{n})_{n=1}^{m}$ is a $\varepsilon'$-chain, and $\mathfrak{i}_{ab} \leq k,$ we must have $x^{s+j}_{[1, \ \mathfrak{i}_{ab} - j +1]} = x^{s+1}_{[j, \ \mathfrak{i}_{ab}]}$ for all $1 \leq j \leq m-s$. In particular, $x^{m}_{\mathfrak{i}_{ab} - m + s +1} = x^{s+1}_{\mathfrak{i}_{ab}}$. Hence $x^{m}_{\mathfrak{i}_{ab} - m + s +1} = x^{s+1}_{\mathfrak{i}_{ab}}\neq x_{n+\mathfrak{i}_{ab}} = x_{m+(\mathfrak{i}_{ab}-m+s)}$ and, as $j:= \mathfrak{i}_{ab}-m+s < k$, this contradicts (\ref{equationx}). Therefore, once again we conclude that there exists an index $i$, $1 \leq i < \mathfrak{i}_{ab}$, such that $a_{i}=b_{i} \notin F_{\varepsilon'}$, and so $d(a,b)< \varepsilon'$.

Finally, suppose that $\sigma^{s}(x)_{\mathfrak{i}_{ab}}=a_{\mathfrak{i}_{ab}}\in F_{\varepsilon'}$. We also suppose that $x^{s+1}_{\mathfrak{i}_{ab}}=b_{\mathfrak{i}_{ab}} \notin F_{\varepsilon'}$ (since we have dealt with the case $b_{\mathfrak{i}_{ab}} \in F_{\varepsilon'}$ in the previous paragraph). Assume that $s+\mathfrak{i}_{ab}\leq m$ and suppose that for all $1 \leq i < \mathfrak{i}_{ab}$ we have $a_{i}=b_{i} \in F_{\varepsilon'}$. As $(x^{n})_{n=1}^{m}$ is a $\varepsilon'$-chain, we must have $x^{s+j}_{\mathfrak{i}_{ab}-j+1} \notin F_{\varepsilon'}$ for all $1 \leq j \leq \mathfrak{i}_{ab}$. In particular, $x^{s+\mathfrak{i}_{ab}}_{1} \notin F_{\varepsilon'}$. Then, $a_{\mathfrak{i}_{ab}} = x_{s + \mathfrak{i}_{ab}} \neq x^{s+\mathfrak{i}_{ab}}_{1}$, contradicting Equation (\ref{xlambdaxi}). Therefore, there exists an index $i$, $1 \leq i < \mathfrak{i}_{ab}$, such that $a_{i}=b_{i} \notin F_{\varepsilon'}$ and hence $d(a,b)< \varepsilon'$. Now, assume that $s + \mathfrak{i}_{ab} > m$. Suppose that for all $1 \leq i < \mathfrak{i}_{ab}$ we have $a_{i}=b_{i} \in F_{\varepsilon'}$. As $(x^{n})_{n=1}^{m}$ is a $\varepsilon'$-chain, and $\mathfrak{i}_{ab} \leq k,$ we must have that $x^{s+j}_{\mathfrak{i}_{ab}-j+1} \notin F_{\varepsilon'}$ for all $1 \leq j \leq m-s$. In particular, $x^{m}_{\mathfrak{i}_{ab}-m+s+1} \notin F_{\varepsilon'}$. Then, we have that $x_{m+(\mathfrak{i}_{ab}-m+s)} \neq x^{m}_{(\mathfrak{i}_{ab}-m+s)+1}$ (because $x_{m+(\mathfrak{i}_{ab}-m+s)}=x_{s + \mathfrak{i}_{ab}} \in F_{\varepsilon'}$ and $x^{m}_{(\mathfrak{i}_{ab}-m+s)+1} \notin F_{\varepsilon'}$). As $j:= \mathfrak{i}_{ab}-m+s < k$, this contradicts (\ref{equationx}). Therefore, once more we obtain that there exists an index $i$, $1 \leq i < \mathfrak{i}_{ab}$, such that $a_{i}=b_{i} \notin F_{\varepsilon'}$, which implies that $d(a,b)< \varepsilon'$. We conclude that $x$ $\varepsilon'$-shadows the finite $\varepsilon'$-chain $(x^{n})_{n=1}^{m}$, as desired.
\end{proof}

\subsection{Characterization of the infinite shadowing property.}

In this section, we characterize the infinite shadowing property in terms of two conditions, which we call the first and second infinite path conditions. 

Throughout this section,  $(x^{n})_{n}$ denotes an infinite chain. We start with a proposition that, although simple, is essential in the sequel (in particular to show that, for $\delta = \dfrac{1}{2^{N(k)}}$ with $k\in \N$ (see Definition \ref{pnk}), it is always possible to find an element that $\delta$-shadows a $\delta$-chain $(x^{n})_{n}$ of infinite paths such that $x^{n}_{2} = x^{n+1}_{1}$ for all natural $n \geq 1$). 

\begin{proposition}\label{propgeqdelta}
Let $k \in \mathbb{N}$, $k \geq 1$, and let $(x^{n})_{n}$ be a $\frac{1}{2^{N(k)}}$-chain of infinite paths. Suppose that there is an element $x \in X$, $x = x_{1}x_{2}x_{3}\ldots$ and a natural $n \geq 0$ such that $d(\sigma^{n}(x), x^{n+1}) \geq \dfrac{1}{2^{N(k)}}$. Then $x_{n+\mathfrak{i}_{ab}} \neq x^{n+\mathfrak{i}_{ab}}_{1}$, where $a = \sigma^{n}(x)$, $b = x^{n+1}$, and $\mathfrak{i}_{ab} = \mbox{min}\{i: a_{i} \neq b_{i} \}$. 
\end{proposition}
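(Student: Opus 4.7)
The statement is essentially the infinite-chain analogue of Lemma~\ref{corNkfinite}, with the restriction $n+\mathfrak{i}_{ab}\leq m$ automatically satisfied because the chain $(x^{n})_{n}$ extends arbitrarily far. My plan is therefore to follow the structure of the proof of Lemma~\ref{corNkfinite}, but with a cleaner case split since no boundary issue arises.

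The starting point is Remark~\ref{carnaval} applied to $a=\sigma^{n}(x)$ and $b=x^{n+1}$. Since $d(a,b)\geq 1/2^{N(k)}$, we obtain $\mathfrak{i}_{ab}\leq k$ and exactly one of the following two sub-cases (Condition~3 of the remark is impossible by definition of $\mathfrak{i}_{ab}$): (I) $\mathfrak{i}_{b}$ is defined with $\mathfrak{i}_{b}\leq\mathfrak{i}_{ab}$, while $\mathfrak{i}_{a}$ is either undefined or strictly larger than $\mathfrak{i}_{ab}$; or (II) $\mathfrak{i}_{b}$ is undefined, or is defined and strictly larger than $\mathfrak{i}_{ab}$. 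In either case the goal is to compare $x_{n+\mathfrak{i}_{ab}}=\sigma^{n}(x)_{\mathfrak{i}_{ab}}=a_{\mathfrak{i}_{ab}}$ with $x^{n+\mathfrak{i}_{ab}}_{1}$, and the core idea is to propagate information one step at a time along the chain using that each single-step jump has distance less than $1/2^{N(k)}$.

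In Case~(I), minimality of $\mathfrak{i}_{ab}$ forces $\mathfrak{i}_{b}=\mathfrak{i}_{ab}$, so $x^{n+1}_{\mathfrak{i}_{ab}}\notin F_{1/2^{N(k)}}$. I would then argue inductively on $i=1,\ldots,\mathfrak{i}_{ab}$ that $x^{n+i}_{\mathfrak{i}_{ab}-i+1}\notin F_{1/2^{N(k)}}$: if it were in $F_{1/2^{N(k)}}$ at some step, then the inequality $d(\sigma(x^{n+i-1}),x^{n+i})<1/2^{N(k)}$ combined with Remark~\ref{carnaval} would force $x^{n+i-1}_{\mathfrak{i}_{ab}-i+2}$ to agree with it and hence lie in $F_{1/2^{N(k)}}$, a contradiction. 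Setting $i=\mathfrak{i}_{ab}$ gives $x^{n+\mathfrak{i}_{ab}}_{1}\notin F_{1/2^{N(k)}}$, whereas $\sigma^{n}(x)_{\mathfrak{i}_{ab}}\in F_{1/2^{N(k)}}$ because $\mathfrak{i}_{a}$ is undefined or exceeds $\mathfrak{i}_{ab}$; the two edges therefore differ. In Case~(II), the segment $x^{n+1}_{[1,\mathfrak{i}_{ab}]}$ lies entirely in $F_{1/2^{N(k)}}$, and a symmetric induction using the chain property yields the exact equality $x^{n+i}_{\mathfrak{i}_{ab}-i+1}=x^{n+1}_{\mathfrak{i}_{ab}}$ for $1\leq i\leq\mathfrak{i}_{ab}$; in particular $x^{n+\mathfrak{i}_{ab}}_{1}=x^{n+1}_{\mathfrak{i}_{ab}}=b_{\mathfrak{i}_{ab}}\neq a_{\mathfrak{i}_{ab}}=x_{n+\mathfrak{i}_{ab}}$, as required.

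The only real subtlety, which is essentially bookkeeping, is the two inductive propagation arguments: converting the assumption that $x^{n+1}_{\mathfrak{i}_{ab}}$ is (or is not) an edge of $F_{1/2^{N(k)}}$ into the corresponding statement about $x^{n+\mathfrak{i}_{ab}}_{1}$, by shifting one position down the chain at a time and invoking Remark~\ref{carnaval} at each step. No argument is required beyond this; the absence of the constraint $n+\mathfrak{i}_{ab}\leq m$ means that the third case analyzed in the proof of Proposition~\ref{nkfinitechain} (where $s+\mathfrak{i}_{ab}>m$) does not arise, which makes the proof strictly shorter than that of Lemma~\ref{corNkfinite}.
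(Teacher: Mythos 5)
Your proposal is correct and takes essentially the same route as the paper: the published proof of Proposition~\ref{propgeqdelta} is exactly the argument of Lemma~\ref{corNkfinite} with the boundary case $n+\mathfrak{i}_{ab}>m$ removed, split into the same two sub-cases ($\mathfrak{i}_{b}$ defined with $\mathfrak{i}_{b}=\mathfrak{i}_{ab}$, versus $\mathfrak{i}_{b}$ undefined or $\mathfrak{i}_{b}>\mathfrak{i}_{ab}$) and using the same one-step inductive propagation along the chain via Remark~\ref{carnaval}.
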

\begin{proof}
Let $(x^{n})_{n}$ be a $\frac{1}{2^{N(k)}}$-chain and $x \in X$ be as in the hypothesis. Then, for $a= \sigma^{n}(x)$ and $b = x^{n+1}$, from Proposition~\ref{characteridelta} we obtain that $\mathfrak{i}_{ab} \leq k$ and
\begin{itemize}
    \item $\mathfrak{i}_a$ or $\mathfrak{i}_a$ is not defined, or
    \item $\mathfrak{i}_{a} > \mathfrak{i}_{ab}$ or $\mathfrak{i}_{b} > \mathfrak{i}_{ab}$.
\end{itemize}
 We deal with the second case above and leave the first one to the reader. Notice that, by the definition of $\mathfrak{i}_{ab}$, we have that $x_{n+\mathfrak{i}_{ab}} = \sigma^{n}(x)_{\mathfrak{i}_{ab}} \neq x^{n+1}_{\mathfrak{i}_{ab}}$. Now, we split the proof into two parts.

Suppose first that $\mathfrak{i}_{b} \leq \mathfrak{i}_{ab}$. Then $\mathfrak{i}_{a} > \mathfrak{i}_{ab}$ and, by the definition of $\mathfrak{i}_{a}$, we have that $x_{n+\mathfrak{i}_{ab}} =\sigma^{n}(x)_{\mathfrak{i}_{ab}} \in \{e_{\ell}: \ell \leq k\}$. Moreover, by the the minimality of $\mathfrak{i}_{ab}$ and because $\mathfrak{i}_{b} < \mathfrak{i}_{a}$, we must have $\mathfrak{i}_{b} = \mathfrak{i}_{ab}$. As $(x^{n})_{n}$ is a $\frac{1}{2^{N(k)}}$-chain, $\mathfrak{i}_{ab} \leq k$, and $x^{n+1}_{\mathfrak{i}_{ab}} \in \{e_{j}: j > k\}$, we obtain that  $x^{n+2}_{\mathfrak{i}_{ab}-1} \in \{e_{j}: j > k\}$. Inductively, we get that $x^{n+i}_{\mathfrak{i}_{ab}-i+1} \in \{e_{j}: j > k\}$ for all $1 \leq i \leq \mathfrak{i}_{ab}$. In particular, $x_{1}^{n+\mathfrak{i}_{ab}} \in \{e_{j}: j > k\}$. Then, $x_{n+\mathfrak{i}_{ab}} =\sigma^{n}(x)_{\mathfrak{i}_{ab}} \neq x_{1}^{n+\mathfrak{i}_{ab}}$ as desired.

Next, suppose that $\mathfrak{i}_{b} > \mathfrak{i}_{ab}$. Then, $x^{n+1}_{i} \in \{e_{\ell}: \ell \leq k\}$ for all $1 \leq i \leq \mathfrak{i}_{ab}$ and, as $\mathfrak{i}_{ab} \leq k$ and $(x^{n})_{n}$ is a $\frac{1}{2^{N(k)}}$-chain, we must have $x^{n+2}_{\mathfrak{i}_{ab}-1} = x^{n+1}_{\mathfrak{i}_{ab}}$. Proceeding inductively, we obtain that $x^{n+i}_{\mathfrak{i}_{ab} - i +1 } = x^{n + 1}_{\mathfrak{i}_{ab}}$ for all $1 \leq i \leq \mathfrak{i}_{ab}$. In particular, $x^{n+1}_{\mathfrak{i}_{ab}} = x^{n+\mathfrak{i}_{ab}}_{1}$. Then, $x_{n+\mathfrak{i}_{ab}} \neq x^{n+1}_{\mathfrak{i}_{ab}} = x^{n+\mathfrak{i}_{ab}}_{1}$, and the proof is finished.
\end{proof}

\begin{corollary}\label{propdfracchain}
Fix $k \in \mathbb{N}$, $k \geq 1$, and let $(x^{n})_{n}$ be a $\frac{1}{2^{N(k)}}$-chain formed by infinite paths which satisfies $x^{n}_{2} = x^{n+1}_{1}$ for all natural $n \geq 1$. Then, the element $x = x_{1}x_{2}x_{3}\ldots$ given by $x_{n} = x^{n}_{1}$, for all $n \geq 1$, is well defined and $\frac{1}{2^{N(k)}}$-shadows $(x^{n})_{n}$.
\end{corollary}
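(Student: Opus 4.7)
The plan is to establish the corollary as a direct consequence of Proposition~\ref{propgeqdelta}, treating the two claims (that $x$ is a valid infinite path, and that $x$ shadows the given chain) separately.

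First, I would verify that the concatenation $x = x_1 x_2 x_3 \ldots$, with $x_n := x^n_1$, is indeed an element of $\mathfrak p^\infty$. For this, one needs $r(x_n) = s(x_{n+1})$ for every $n \geq 1$. By the hypothesis $x^n_2 = x^{n+1}_1$, and since $x^n$ is itself an infinite path, $r(x^n_1) = s(x^n_2) = s(x^{n+1}_1)$. Hence $r(x_n) = s(x_{n+1})$, and $x$ is a well-defined infinite path.

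Second, I would show that $x$ actually $\frac{1}{2^{N(k)}}$-shadows the chain by contradiction. Suppose some $n \geq 0$ violates the shadowing condition, so that $d(\sigma^n(x), x^{n+1}) \geq \dfrac{1}{2^{N(k)}}$. Setting $a := \sigma^n(x)$ and $b := x^{n+1}$, Proposition~\ref{propgeqdelta} applies and yields
\[
x_{n+\mathfrak{i}_{ab}} \neq x^{n+\mathfrak{i}_{ab}}_1.
\]
However, by the very definition of $x$, we have $x_m = x^m_1$ for every $m \geq 1$, and in particular for $m = n + \mathfrak{i}_{ab}$. This contradicts the previous inequality, so no such $n$ exists and we conclude $d(\sigma^n(x), x^{n+1}) < \dfrac{1}{2^{N(k)}}$ for all $n \geq 0$.

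There is essentially no obstacle here, since Proposition~\ref{propgeqdelta} has already done the delicate combinatorial work of propagating the indices $\mathfrak i_a, \mathfrak i_b, \mathfrak i_{ab}$ backward through a chain to force $x^{n+\mathfrak{i}_{ab}}_1$ to disagree with $x_{n+\mathfrak{i}_{ab}}$. The corollary is just the observation that if we \emph{construct} $x$ so that $x_m$ always matches $x^m_1$, the conclusion of Proposition~\ref{propgeqdelta} becomes impossible, so its hypothesis must fail at every stage, which is exactly the shadowing statement.
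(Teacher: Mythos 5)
Your proposal is correct and follows essentially the same route as the paper's own proof: verify well-definedness of $x$ from the compatibility $r(x^n_1)=s(x^n_2)=s(x^{n+1}_1)$, then derive a contradiction with the definition $x_m=x^m_1$ by invoking Proposition~\ref{propgeqdelta}. No substantive differences.
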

\begin{proof}
Let $(x^{n})_{n}$ be a $\frac{1}{2^{N(k)}}$-chain and $x$ as in the hypothesis. Since $x^{n}_{2} = x^{n+1}_{1}$ for all $n \geq 1$, we have that $s(x^{n+1}_{1}) \in r(x^{n}_{1})$ for all $n \geq 1$. This is enough to conclude that $x \in X$.

Next, suppose that $x$ does not $\frac{1}{2^{N(k)}}$-shadows $(x^{n})_{n}$. Then, there is a natural $n \geq 0$ such that $d(\sigma^{n}(x),x^{n+1}) \geq \dfrac{1}{2^{N(k)}}$. Therefore, for $a= \sigma^{n}(x)$ and $b = x^{n+1}$, Proposition~\ref{propgeqdelta} implies that $x_{n+\mathfrak{i}_{ab}} \neq x^{n+\mathfrak{i}_{ab}}_{1}$, which is a contradiction with the definition of the element $x \in X$. This finishes the proof.
\end{proof}

In the next corollary, we show that the Deaconu-Renault system associated with a finite graph always has the shadowing property. This, of course, can be obtained from the classical results in symbolic dynamics, but we include it here for completeness and to illustrate the use of the corollary above.

\begin{corollary}\label{finiteedges}
Let $E$ be a finite graph. Then, the associated Deaconu-Renault system $(X,\sigma)$ has the shadowing property.
\end{corollary}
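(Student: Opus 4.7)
The plan is to deduce this from Corollary~\ref{propdfracchain}, using the finiteness of $E^1$ to force every sufficiently fine $\delta$-chain to agree in its first coordinate after shifting. First, I would observe that, since $E$ has finitely many edges and no infinite emitters, $X_{fin}=\emptyset$ and $X=\mathfrak{p}^\infty$, so I only need to shadow chains of infinite paths.

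Given $\varepsilon>0$, I would choose $k\in\mathbb{N}$ large enough that $\{e_1,\ldots,e_k\}=E^1$ (possible because $E^1$ is finite) and $\tfrac{1}{2^{N(k)}}\leq\varepsilon$. Setting $\delta:=\tfrac{1}{2^{N(k)}}$, the construction of an edge-ordered enumeration (Definition~\ref{edgeordered} and Remark~\ref{tatuvoador}) ensures that every edge appears at a position $\leq N(k)$, so that $F_\delta=E^1$. In particular, no edge of $E$ lies outside $\{e_1,\ldots,e_k\}$.

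The key step is then to show that any $\delta$-chain $(x^n)_n$ of infinite paths satisfies $x^n_2=x^{n+1}_1$ for every $n$. If $\sigma(x^n)=x^{n+1}$ this is trivial; otherwise, Proposition~\ref{characteridelta} applies to the distinct infinite paths $a:=\sigma(x^n)$ and $b:=x^{n+1}$. Since no edge lies outside $\{e_1,\ldots,e_k\}$, the indices $\mathfrak{i}_a$ and $\mathfrak{i}_b$ are undefined, ruling out the second alternative of the proposition. Hence $\mathfrak{i}_{ab}>k\geq 1$, which in particular forces $\sigma(x^n)_1=x^{n+1}_1$, i.e., $x^n_2=x^{n+1}_1$.

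With this hypothesis verified, I would apply Corollary~\ref{propdfracchain} directly: the element $x:=x^1_1 x^2_1 x^3_1\cdots$ is a well-defined infinite path (since $s(x^{n+1}_1)=s(x^n_2)=r(x^n_1)$) and $\tfrac{1}{2^{N(k)}}$-shadows $(x^n)_n$. Because $\tfrac{1}{2^{N(k)}}\leq\varepsilon$, this $x$ also $\varepsilon$-shadows the chain, completing the proof. I do not foresee a serious obstacle; the only point to be careful about is matching the choice of $k$ to both the desired precision $\varepsilon$ and the requirement that the edge-ordered enumeration has exhausted $E^1$ by position $N(k)$, which is immediate for a finite graph.
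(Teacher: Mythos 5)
Your proposal is correct and follows essentially the same route as the paper: choose $k$ so that all edges of $E$ lie in $\{e_1,\ldots,e_k\}$, use Proposition~\ref{characteridelta} to conclude that a $\frac{1}{2^{N(k)}}$-chain satisfies $x^n_2=x^{n+1}_1$, and then invoke Corollary~\ref{propdfracchain}. If anything, your version is slightly more careful than the paper's in also requiring $\frac{1}{2^{N(k)}}\leq\varepsilon$ when fixing $k$.
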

\begin{proof}
Let $\varepsilon > 0$ and define $k_{0} = \mbox{max}\{k: e_{k} \ \mbox{is an edge of $E$}\}$ and $\delta:= \dfrac{1}{2^{N(k_{0})}}$ (see Definition \ref{pnk}). Notice that if $x,y \in X$ are infinite paths such that $d(x,y) < \delta$, then $x_{1} = y_{1}$. In fact, as the set of all edges of $E$ is $\{e_{j}: j \leq k_{0}\}$ and $d(x,y) < \delta$, by Proposition~\ref{characteridelta}, we obtain that $\mathfrak{i}_{xy} > k_{0} \geq 1$, where $\mathfrak{i}_{xy} = \mbox{min}\{i: x_{i} \neq y_{i}\}$.

Now, let $(x^{n})_{n}$ be a $\delta$-chain. As $d(\sigma(x^{n}), x^{n+1}) < \delta$ for all $n \geq 1$, we have that $x_{2}^{n} = x_{1}^{n+1}$ for all $n \geq 1$. Then, from Corollary~\ref{propdfracchain}, we obtain that the infinite path $x:= x^{1}_{1}x^{2}_{1}x^{3}_{1}\dots$ $\delta$-shadows $(x^{n})_{n}$.
\end{proof}

Before we proceed to define the First Infinite Path Condition, we need the following lemma, which is similar to Lemma~\ref{remoafogado}.

\begin{lemma}\label{f(in)infinite}
Let $\{\lambda^{1}, \lambda^{2}, \lambda^{3}, \ldots\}$ be an infinite set of finite paths such that $|\lambda^{i}| \geq 2$ for all $i \in \mathbb{N}$, and consider the function: 
\begin{center}
    $f:\bigcup_{i=1}^\infty\{(i,n)\in \N\times \N: \ 1 \leq n < |\lambda^{i}|\} \rightarrow \mathbb{N}$\\
    $(i,n)\mapsto \displaystyle\left(\sum_{j=0}^{i-1}|\lambda^{j}|\right) + n -i +1$,
\end{center}
where $|\lambda^{0}| := 0$. Then, $f$ is a bijection.
\end{lemma}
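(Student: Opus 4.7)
The plan is to mirror the proof of Lemma~\ref{remoafogado}, with adjustments due to the fact that the new domain is infinite and no longer includes a distinguished ``last'' pair of the form $(\ell, |\lambda^\ell|)$. I will reduce the problem to two computations: one showing injectivity (which relies on comparing the values of $f$ in consecutive ``blocks'' indexed by $i$) and one showing surjectivity onto $\mathbb{N}$ (which relies on observing that consecutive values of $f$ differ by exactly $1$, so there are no gaps).

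For injectivity, first I would prove the inequality $f(i,n) < f(i+1,1)$ for every $i \geq 1$ and every $1 \leq n < |\lambda^i|$. The computation is
\[
f(i,n) = \left(\sum_{j=0}^{i-1}|\lambda^j|\right) + n - i + 1 < \left(\sum_{j=0}^{i-1}|\lambda^j|\right) + |\lambda^i| - i + 1 = \left(\sum_{j=0}^{i}|\lambda^j|\right) - i + 1 = f(i+1,1),
\]
where the middle strict inequality uses $n < |\lambda^i|$. Iterating this gives $f(i_0,n_0) < f(i_1,1) \leq f(i_1,n_1)$ whenever $i_0 < i_1$, so if $f(i_0,n_0) = f(i_1,n_1)$ we must have $i_0 = i_1$, and the formula for $f$ then forces $n_0 = n_1$.

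For surjectivity, I would observe two equalities: $f(i,n+1) = f(i,n) + 1$ whenever $n + 1 < |\lambda^i|$ (direct from the definition), and $f(i+1,1) = f(i,|\lambda^i|-1) + 1$, which follows from
\[
f(i,|\lambda^i|-1) = \left(\sum_{j=0}^{i-1}|\lambda^j|\right) + |\lambda^i| - 1 - i + 1 = \left(\sum_{j=0}^{i}|\lambda^j|\right) - i,
\]
compared with $f(i+1,1) = \sum_{j=0}^{i}|\lambda^j| - i + 1$. Combined with $f(1,1) = 1$ and the fact that $|\lambda^i| \geq 2$ for all $i$ (so each block contributes at least one pair), these two equalities show that the image of $f$ is an initial segment of $\mathbb{N}$ with no gaps, and since the blocks are infinite in number the image is all of $\mathbb{N}$.

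There is no real obstacle here; the only subtlety is organizational. If one prefers a shorter write-up, one can instead reduce directly to Lemma~\ref{remoafogado}: for any target $m \in \mathbb{N}$, pick $\ell$ large enough that $\left(\sum_{j=0}^{\ell}|\lambda^j|\right) - \ell + 1 \geq m$, apply the finite lemma to $\lambda^1,\ldots,\lambda^\ell$ to find a unique preimage $(i,n)$ in its domain, and then note that either $(i,n)$ already lies in the infinite domain or $(i,n)=(\ell,|\lambda^\ell|)$, in which case $m$ equals $f(\ell+1,1)$ in the infinite domain. Injectivity transfers from the finite case by the same reduction.
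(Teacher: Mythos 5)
Your proposal is correct and follows exactly the route the paper intends: the paper omits this proof, stating only that it is analogous to Lemma~\ref{remoafogado}, and your argument is precisely that analogy carried out, with injectivity via $f(i,n) < f(i+1,1)$ and surjectivity via the consecutive-values identities $f(i,n+1)=f(i,n)+1$ and $f(i+1,1)=f(i,|\lambda^{i}|-1)+1$ together with $f(1,1)=1$. You also correctly identify the one point where the finite proof does not transfer verbatim (the cardinality count $\#Im(f)=\#Dom(f)$) and replace it with the no-gaps argument, which is the right fix.
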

\begin{proof}
We omit the proof because it is analogous to the proof of Lemma~\ref{remoafogado}.
\end{proof}

In the definition of the First Infinite Path Condition below, we use the notion of $F_\varepsilon$ as given in Definition~\ref{Fepsilon}.

\begin{definition}\label{FIPC}\textbf{(First Infinite Path Condition - IPC1)} Let $(X,\sigma)$ be the Deaconu-Renault system associated with a graph. We say that $(X,\sigma)$ satisfies the First Infinite Path Condition if, for all $\varepsilon > 0$, there is $0 < \delta \leq \varepsilon$ such that for all infinite set $\{\lambda^{1}, \lambda^{2}, \ldots\}$ of finite paths which satisfy:
\begin{enumerate}
    \item $|\lambda^{i}| \geq 2$, for all $i \in \mathbb{N}$;
    \item $(F_{\delta})^{c} \cap [\{\lambda^{i}_{j}: \  i \in \mathbb{N}, \ 1 \leq j \leq |\lambda^{i}| \}-\{\lambda^{1}_{1}\}]= \{\lambda^{i+1}_{1}: i \in \mathbb{N}\} \cup \{\lambda^{i}_{|\lambda^{i}|}: i \in \mathbb{N}\}$,
    \end{enumerate}
   there is an infinite path $\lambda$ such that, for all $i,n$ for which $(i,n)\in Dom(f)$ (see Definition~\ref{f(in)infinite}), the following conditions are satisfied:
\begin{enumerate}[a.]
        \item If $\lambda_{n}^{i} \in F_{\varepsilon}$, then $\lambda_{f(i,n)} = \lambda^{i}_{n}$;
        \item If $\lambda_{n}^{i} \notin F_{\varepsilon}$, then $\lambda_{f(i,n)} \notin F_{\varepsilon}$.
\end{enumerate}
\end{definition}

Before we define the Second Infinite Path Condition, we prove an auxiliary result that characterizes infinite paths that shadow certain chains.

\begin{lemma}\label{lambda1lambda2cdots}
Given $\varepsilon>0$, let $0 < \delta \leq \varepsilon$ arise as in Definition~\ref{FIPC} and let $\{\lambda^{1}, \lambda^{2}, \lambda^{3} \ldots\}$ be an infinite set of finite paths which satisfy Conditions~1. and 2. of Definition~\ref{FIPC}. Moreover, for each $i \in \mathbb{N}$, let $y^{i}$ be an infinite path that extends $\lambda^i$, i.e. such that $y^{i}_{k} = \lambda^{i}_{k}$ for all $1 \leq k \leq |\lambda^{i}|$. With the function $f$ of Lemma~\ref{f(in)infinite} in mind, define the following chain: 
\begin{equation}\label{xinfiny}
    (x^{f(i,n)})_{f(i,n)=1}^{+ \infty}, \ \mbox{where \ } x^{f(i,n)} := \sigma^{n-1}(y^{i}).
\end{equation} 
Then, $(x^{f(i,n)})_{f(i,n)=1}^{+\infty}$ is an infinite $\delta$-chain and every infinite path $x \in X$  which $\varepsilon$-shadows such chain satisfies, for all $i,n$ for which $(i,n)\in Dom(f)$:
\begin{enumerate}[a.]
    \item If $\lambda_{n}^{i} \in F_{\varepsilon}$, then $x_{f(i,n)} = \lambda^{i}_{n}$;
        \item If $\lambda_{n}^{i} \notin F_{\varepsilon}$, then $x_{f(i,n)} \notin F_{\varepsilon}$. 
\end{enumerate}
\end{lemma}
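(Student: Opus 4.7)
The plan is to mirror the proof of Lemma~\ref{lemmafinfinite} very closely; the only conceptual extra ingredient is that we no longer have a ``last'' $\lambda^{\ell}$, so the case split at the tail disappears and every index $i$ behaves like an interior index. Everything reduces to (a) checking the $\delta$-chain condition at each junction $f(i,n) \mapsto f(i,n)+1$, and (b) extracting the two pointwise conditions a.\ and b.\ from the fact that a path $\varepsilon$-shadows the chain.

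For (a), I would split into two cases according to whether $f(i,n)+1 = f(i,n+1)$ or $f(i,n)+1 = f(i+1,1)$, arguing exactly as in the first two cases of Lemma~\ref{lemmafinfinite}. When $1 \leq n < |\lambda^{i}|-1$, a direct computation using Definition~\ref{f(in)infinite} gives $f(i,n)+1 = f(i,n+1)$, and then
\[
\sigma(x^{f(i,n)}) = \sigma(\sigma^{n-1}(y^{i})) = \sigma^{n}(y^{i}) = x^{f(i,n+1)},
\]
so the distance at this junction is zero. When $n = |\lambda^{i}|-1$, a similar arithmetic check gives $f(i,|\lambda^{i}|-1)+1 = f(i+1,1)$. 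The first edge of $\sigma(x^{f(i,|\lambda^{i}|-1)})$ is then $y^{i}_{|\lambda^{i}|} = \lambda^{i}_{|\lambda^{i}|}$, while the first edge of $x^{f(i+1,1)}$ is $y^{i+1}_{1} = \lambda^{i+1}_{1}$, and by Condition~2 of Definition~\ref{FIPC} both of these edges lie in $(F_{\delta})^{c}$. Hence any finite path in $\mathfrak{p}$ that is an initial segment of one but not the other must contain an edge outside $F_{\delta}$, which by the definition of $F_{\delta}$ forces the distance to be strictly less than $\delta$.

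For (b), fix a path $x \in X$ that $\varepsilon$-shadows $(x^{f(i,n)})_{f(i,n)=1}^{+\infty}$ and any pair $(i,n) \in \mathrm{Dom}(f)$. Then
\[
d\!\left(\sigma^{f(i,n)-1}(x),\, x^{f(i,n)}\right) < \varepsilon,
\]
and by construction $x^{f(i,n)}_{1} = \sigma^{n-1}(y^{i})_{1} = y^{i}_{n} = \lambda^{i}_{n}$. If $\lambda^{i}_{n} \in F_{\varepsilon}$, then the first edge $p_{j_{0}}$ of either path would give $1/2^{j_{0}} \geq \varepsilon$ unless the two first edges coincide; hence $\sigma^{f(i,n)-1}(x)_{1} = \lambda^{i}_{n}$, which is condition a.\ since $\sigma^{f(i,n)-1}(x)_{1} = x_{f(i,n)}$. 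Conversely, if $\lambda^{i}_{n} \notin F_{\varepsilon}$ but $x_{f(i,n)} \in F_{\varepsilon}$, then $x_{f(i,n)}$ itself is one of the indexed $p_{j}$ with $1/2^{j} \geq \varepsilon$, so it is an initial segment of $\sigma^{f(i,n)-1}(x)$ but not of $x^{f(i,n)}$, producing $d \geq \varepsilon$, a contradiction. This gives condition b.

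The only mildly delicate point is bookkeeping: one must verify that the identities $f(i,|\lambda^{i}|-1)+1 = f(i+1,1)$ and $f(i,n)+1 = f(i,n+1)$ cover all positions, and in particular that (unlike in the finite-chain lemma) no boundary term is ever reached, so there is no tail condition analogous to Condition~2 of Proposition~\ref{nkfinitechain} to keep track of. Once this bookkeeping is in place, both parts are immediate from Proposition~\ref{characteridelta} and the definition of $F_{\varepsilon}$.
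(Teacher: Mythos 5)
Your proposal is correct and follows essentially the same route as the paper's proof: the same two-case verification of the $\delta$-chain condition at the junctions $f(i,n)+1=f(i,n+1)$ and $f(i,|\lambda^{i}|-1)+1=f(i+1,1)$, and the same extraction of conditions a.\ and b.\ from $x^{f(i,n)}_{1}=\lambda^{i}_{n}$ together with the definition of $F_{\varepsilon}$. The paper itself simply abbreviates by citing Lemma~\ref{lemmafinfinite}, and your observation that the infinite setting removes the tail case matches its treatment.
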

\begin{proof}
Most of the proof is similar to the proof of Lemma~\ref{lemmafinfinite} and so we provide fewer details here. 

We start by proving that $(x^{f(i,n)})_{f(i,n)=1}^{+\infty}$ is a $\delta$-chain, and for this we split the proof in two cases. First, fix $i \in \mathbb{N}$ and suppose that $1 \leq n < |\lambda^{i}|-1$. In this case, $f(i,n)+1 = f(i,n+1)$ and hence, $$\sigma(x^{f(i,n)})= \sigma(\sigma^{n-1}(y^{i})) = \sigma^{n}(y^{i})=x^{f(i, n+1)}=x^{f(i,n)+1},$$
which means that $d(\sigma(x^{f(i,n)}),x^{f(i,n)+1}) = 0 < \delta$.
Next, fix $i \in \mathbb{N}$ and suppose that $n=|\lambda^{i}|-1$. In this case, we have that $f(i,n)+1 = f(i+1,1)$. Then, $$\sigma(x^{f(i,n)})_{1} = \sigma(\sigma^{|\lambda^{i}|-2}(y^{i}))_{1} = \sigma^{|\lambda^{i}|-1}(y^{i})_{1} = y^{i}_{|\lambda^{i}|} \notin F_{\delta}$$ and also $x^{f(i,n)+1}_{1} = x^{f(i+1,1)}_{1} \notin F_{\delta}$. Therefore, we again obtain that $d(\sigma(x^{f(i,n)}), x^{f(i,n)+1}) < \delta$ and we conclude that $(x^{f(i,n)})_{f(i,n)=1}^{\infty}$ is a $\delta$-chain.

To prove the last part of the lemma, notice first that by Equality~(\ref{xinfiny}) we have that $x^{f(i,n)}_{1} = y^{i}_{n} = \lambda^{i}_{n}$, for all $(i,n) \in Dom(f)$. Now, let $x \in X$ be an infinite path which $\varepsilon$-shadows the infinite chain $(x^{f(i,n)})_{f(i,n)=1}^{\infty}$. Suppose that $\lambda_{n}^{i} \in F_{\varepsilon}$. Then, $x^{f(i,n)}_{1} \in F_{\varepsilon}$ and so we must have that $\sigma^{f(i,n)-1}(x)_{1} = x_{1}^{f(i,n)}$ (because $d(\sigma^{f(i,n)-1}(x),x^{f(i,n)}) < \varepsilon$). Since $\sigma^{f(i,n)-1}(x)_{1} = x_{f(i,n)}$, we obtain that  Condition a. of the lemma is satisfied. To finish, suppose that $\lambda^{i}_{n} \notin F_{\varepsilon}$. Then, $x^{f(i,n)}_{1} \notin F_{\varepsilon}$ and hence we must have that $\sigma^{f(i,n)-1}(x)_{1} \notin F_{\varepsilon}$ (because $d(\sigma^{f(i,n)-1}(x),x^{f(i,n)}) < \varepsilon$). Since $\sigma^{f(i,n)-1}(x)_{1} = x_{f(i,n)}$, we obtain that Condition~b. of the lemma is also satisfied, which ends the proof.   
\end{proof}

Next, we introduce the second condition necessary to characterize infinite shadowing of Deaconu-Renault systems associated with graphs.

\begin{definition}\label{SIPC}\textbf{(Second Infinite Path Condition - IPC2)} Let $(X,\sigma)$ be the Deaconu-Renault system associated with a graph. We say that $X$ satisfies the Second Infinite Path Condition if, for all $\varepsilon > 0$, there is $0 < \delta \leq \varepsilon$ such that for all finite set of finite paths $\{\lambda^{1}, \lambda^{2}, \ldots, \lambda^{\ell}\}$ and all infinite path $\gamma$ which satisfy:
\begin{enumerate}
    \item $|\lambda^{i}| \geq 2$ for all $1 \leq i \leq 2$;
    \item $(F_{\delta})^{c} \cap [\{\lambda^{i}_{j}: \  1 \leq i\leq \ell, \ 1 \leq j \leq |\lambda^{i}| \}-\{\lambda^{1}_{1}\}]= \{\lambda^{i+1}_{1}: 1 \leq i < \ell\} \cup \{\lambda^{i}_{|\lambda^{i}|}: 1 \leq i \leq \ell\}$;
    \item $\gamma_{1} \notin F_{\delta}$;
    \end{enumerate}
   there is an infinite path $\lambda$ such that, considering the function $f$ as in Definition \ref{f(in)}, has the following properties:
\begin{enumerate}[a.]
        \item If $1 \leq i \leq \ell$, $1 \leq n < |\lambda^{i}|$, and $\lambda_{n}^{i} \in F_{\varepsilon}$, then $\lambda_{f(i,n)} = \lambda^{i}_{n}$;
        \item If $1 \leq i \leq \ell$, $1 \leq n < |\lambda^{i}|$, and $\lambda_{n}^{i} \notin F_{\varepsilon}$, then $\lambda_{f(i,n)} \notin F_{\varepsilon}$;
        \item If $s \in \mathbb{N}$ and $\gamma_{s} \in F_{\varepsilon}$, then $\lambda_{f(\ell, |\lambda^{\ell}|-1)+s} = \gamma_{s}$;
        \item If  $s \in \mathbb{N}$ and $\gamma_{s} \notin F_{\varepsilon}$, then $\lambda_{f(\ell, |\lambda^{\ell}|-1)+s} \notin F_{\varepsilon}$.
\end{enumerate}
\end{definition}

We need one last auxiliary result before giving the characterization of shadowing in terms of the first and second infinite path conditions.

\begin{lemma}\label{lambdagamma}
Given $\varepsilon>0$, let $0 < \delta \leq \varepsilon$ arise as in Definition \ref{SIPC}. Let $\{\lambda^{1}, \lambda^{2}, \lambda^{3}, \ldots, \lambda^{\ell}\}$ be a finite set of finite paths and $\gamma$ be an infinite path that satisfy Conditions~1., 2., and 3. of Definition~\ref{SIPC}. Moreover,  for each $1 \leq i \leq |\lambda^{\ell}|$, let $y^{i}$ be an infinite path that extends $\lambda^i$, i.e. such that $y^{i}_{k} = \lambda^{i}_{k}$ for all $1 \leq k \leq |\lambda^{i}|$. With the function $f$ of Definition \ref{f(in)} in mind,  define the following chain: 
\begin{equation}\label{xfininfi}
    (x^{j})_{j=1}^{+ \infty}, \  \text{where} \ x^{j}= \left\{
		\begin{array}{ll}
		\sigma^{n-1}(y^{i}), & \text{if $j=f(i,n)$, $1 \leq i \leq \ell$ and $1 \leq n < |\lambda^{i}|$};\\
		\sigma^{s-1}(\gamma), & \text{if $j=f(\ell,|\lambda^{\ell}|-1) +s$ and $s \in \mathbb{N}$}.\\
				\end{array}
		\right.
\end{equation} 
Then, $(x^{j})_{j=1}^{+\infty}$ is an infinite $\delta$-chain and every infinite path $x \in X$  which $\varepsilon$-shadows such chain satisfies:

\begin{enumerate}[a.]
        \item If $1 \leq i \leq \ell$, $1 \leq n < |\lambda^{i}|$, and $\lambda_{n}^{i} \in F_{\varepsilon}$, then $x_{f(i,n)} = \lambda^{i}_{n}$;
        \item If $1 \leq i \leq \ell$, $1 \leq n < |\lambda^{i}|$, and $\lambda_{n}^{i} \notin F_{\varepsilon}$, then $x_{f(i,n)} \notin F_{\varepsilon}$;
        \item If $s \in \mathbb{N}$ and $\gamma_{s} \in F_{\varepsilon}$, then $x_{f(\ell, |\lambda^{\ell}|-1)+s} = \gamma_{s}$;
        \item If $s \in \mathbb{N}$ and $\gamma_{s} \notin F_{\varepsilon}$, then $x_{f(\ell, |\lambda^{\ell}|-1)+s} \notin F_{\varepsilon}$.
\end{enumerate}
\end{lemma}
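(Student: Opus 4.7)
The plan is to mirror closely the structure of the proofs of Lemma~\ref{lemmafinfinite} and Lemma~\ref{lambda1lambda2cdots}, the main new feature being that the chain is built from two blocks: a finite concatenation of the orbits of the $y^{i}$'s followed by the full orbit of the infinite path $\gamma$. The verification that $(x^{j})_{j=1}^{\infty}$ is a $\delta$-chain, and the verification of items a.--d.\ for any $\varepsilon$-shadowing point, will each be handled by splitting into cases corresponding to where the index $j$ sits.

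For the $\delta$-chain property, I would distinguish four cases for the index $j$ at which we want $d(\sigma(x^{j}),x^{j+1})<\delta$. Case~1: $j=f(i,n)$ with $1\le i\le\ell$ and $1\le n<|\lambda^{i}|-1$; here $f(i,n)+1=f(i,n+1)$ and $\sigma(x^{j})=\sigma^{n}(y^{i})=x^{j+1}$, so $d=0$. Case~2: $j=f(i,|\lambda^{i}|-1)$ with $i<\ell$; then $j+1=f(i+1,1)$ and one gets $\sigma(x^{j})_{1}=y^{i}_{|\lambda^{i}|}=\lambda^{i}_{|\lambda^{i}|}$ while $x^{j+1}_{1}=\lambda^{i+1}_{1}$. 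By Condition~2 of Definition~\ref{SIPC}, both of these first edges lie outside $F_{\delta}$, which forces $d(\sigma(x^{j}),x^{j+1})<\delta$. Case~3: $j=f(\ell,|\lambda^{\ell}|-1)$, the transition into the $\gamma$ block; here $\sigma(x^{j})_{1}=\lambda^{\ell}_{|\lambda^{\ell}|}\notin F_{\delta}$ (by Condition~2) and $x^{j+1}_{1}=\gamma_{1}\notin F_{\delta}$ (by Condition~3), so again $d<\delta$. Case~4: $j=f(\ell,|\lambda^{\ell}|-1)+s$ with $s\ge 1$; then $\sigma(x^{j})=\sigma^{s}(\gamma)=x^{j+1}$, so $d=0$.

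For the second part, let $x\in X$ be an infinite path that $\varepsilon$-shadows $(x^{j})_{j=1}^{\infty}$, so $d(\sigma^{j-1}(x),x^{j})<\varepsilon$ for every $j\ge 1$. From the construction of the chain, $x^{f(i,n)}_{1}=\sigma^{n-1}(y^{i})_{1}=\lambda^{i}_{n}$ when $1\le i\le\ell$ and $1\le n<|\lambda^{i}|$, and $x^{f(\ell,|\lambda^{\ell}|-1)+s}_{1}=\sigma^{s-1}(\gamma)_{1}=\gamma_{s}$ for $s\ge 1$. Using the behaviour of the edge-ordered metric recorded in Proposition~\ref{characteridelta} (or directly in Definition~\ref{Fepsilon}), the inequality $d(\sigma^{j-1}(x),x^{j})<\varepsilon$ forces $\sigma^{j-1}(x)_{1}=x^{j}_{1}$ when $x^{j}_{1}\in F_{\varepsilon}$, and forces $\sigma^{j-1}(x)_{1}\notin F_{\varepsilon}$ when $x^{j}_{1}\notin F_{\varepsilon}$. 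Substituting $x_{j}=\sigma^{j-1}(x)_{1}$ and reading off the two forms of $x^{j}_{1}$ corresponding to the two blocks yields exactly items a.--d.

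The only genuinely new point compared with the previous two lemmas is the ``seam'' between the last $\lambda^{\ell}$-block and the beginning of the $\gamma$-block, so I expect Case~3 above to be the main thing to be careful about; it is precisely there that Condition~3 of Definition~\ref{SIPC} ($\gamma_{1}\notin F_{\delta}$) is indispensable, because it is what guarantees that the first edge of $\gamma$ matches, in the coarse sense used by the metric, the non-$F_{\delta}$ edge $\lambda^{\ell}_{|\lambda^{\ell}|}$. Everything else is bookkeeping in the spirit of Lemmas~\ref{lemmafinfinite} and~\ref{lambda1lambda2cdots}, so I will write those steps briefly and refer back for the routine computations.
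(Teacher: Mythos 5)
Your proposal is correct and follows essentially the same route as the paper's own proof: verify the $\delta$-chain property by cases on where the index $j$ sits (your explicit separation of the seam $j=f(\ell,|\lambda^{\ell}|-1)$ from the interior transitions $i<\ell$ is a slightly cleaner bookkeeping of what the paper folds into one case), and then read off items a.--d.\ from the fact that $d(\sigma^{j-1}(x),x^{j})<\varepsilon$ pins down the first edge of $\sigma^{j-1}(x)$ relative to $F_{\varepsilon}$. No substantive difference from the paper's argument.
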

\begin{proof}

As part of the proof is similar to what we have done in Lemma~\ref{lemmafinfinite} and in Lemma \ref{lambda1lambda2cdots}, we will omit some of its details. 

We start by proving that $(x^{j})_{j=1}^{+\infty}$ is a $\delta$-chain, and for this we split this part of the proof into three cases. First, consider $j=f(i,n)$, $ 1 \leq i \leq \ell$ and suppose that $1 \leq n < |\lambda^{i}|-1$. In this case, $f(i,n)+1 = f(i,n+1)$. Then, $$\sigma(x^{f(i,n)})= \sigma(\sigma^{n-1}(y^{i})) = \sigma^{n}(y^{i})=x^{f(i, n+1)}=x^{f(i,n)+1},$$
which means that $d(\sigma(x^{f(i,n)}),x^{f(i,n)+1}) = 0 < \delta$.
Next, consider again $j=f(i,n)$ and $ 1 \leq i \leq \ell$, but now suppose that $n=|\lambda^{i}|-1$. In this case, we have that $f(i,n)+1 = f(i+1,1)$. Then, $$\sigma(x^{f(i,n)})_{1} = \sigma(\sigma^{|\lambda^{i}|-2}(y^{i}))_{1} = \sigma^{|\lambda^{i}|-1}(y^{i})_{1} = y^{i}_{|\lambda^{i}|} \notin F_{\delta}$$ and also $x^{f(i,n)+1}_{1} = x^{f(i+1,1)}_{1} \notin F_{\delta}$. Hence, we have again that $d(\sigma(x^{f(i,n)}), x^{f(i,n)+1}) < \delta$. 
In the third and final case, we suppose that $j=f(\ell, |\lambda^{\ell}|-1)+s$, for some $s \in \mathbb{N}$. By direct computation, we obtain that $\sigma(x^{j}) = \sigma(\sigma^{s-1}(\gamma)) = \sigma^{s}(\gamma) = x^{j+1}$. Then, $d(\sigma(x^{j}), x^{j+1}) = 0 < \delta$ and we conclude that  $(x^{j})_{j=1}^{+\infty}$ is a $\delta$-chain, as desired.

To prove the last part of the lemma, let $x \in X$ be an infinite path which $\varepsilon$-shadows the infinite chain $(x^{j})_{j=1}^{+\infty}$ (i.e. such that $d(\sigma^{j-1}(x),x^{j}) < \varepsilon$ for all $j \geq 1$). Notice that, by Equality~(\ref{xfininfi}), we have that: 
\begin{equation}\label{equalityinfinitegamma}
     x^{j}_{1}= \left\{
		\begin{array}{ll}
		y^{i}_{n}, & \text{if $j=f(i,n)$, $1 \leq i \leq \ell$ and $1 \leq n < |\lambda^{i}|$};\\
		\gamma_{s}, & \text{if $j=f(\ell,|\lambda^{\ell}|-1) +s$ and $s \in \mathbb{N}$}.\\
				\end{array}
		\right.
\end{equation} 
 First, let 
  $1 \leq i \leq \ell$, $1 \leq n < |\lambda^{i}|$, and suppose that $\lambda_{n}^{i} \in F_{\varepsilon}$.  Then, $x^{f(i,n)}_{1} = y^{n}_{i} = \lambda_{n}^{i}  \in F_{\varepsilon}$ and so we must have that $\sigma^{f(i,n)-1}(x)_{1} = x_{1}^{f(i,n)}$ (because $d(\sigma^{f(i,n)-1}(x),x^{f(i,n)}) < \varepsilon$). As $\sigma^{f(i,n)-1}(x)_{1} = x_{f(i,n)}$, we obtain that Condition a. of the lemma is satisfied. In order to verify Condition~b., let 
  $1 \leq i \leq \ell$, $1 \leq n < |\lambda^{i}|$, and suppose that $\lambda^{i}_{n} \notin F_{\varepsilon}$. Then, $x^{f(i,n)}_{1} \notin F_{\varepsilon}$ and hence we must have that $\sigma^{f(i,n)-1}(x)_{1} \notin F_{\varepsilon}$ (because $d(\sigma^{f(i,n)-1}(x),x^{f(i,n)}) < \varepsilon$). As $\sigma^{f(i,n)-1}(x)_{1} = x_{f(i,n)}$, we conclude that Condition b. is satisfied. To verify Itens c. and d. of the lemma, let $s \in \mathbb{N}$ and $j=f(\ell, |\lambda^{\ell}| -1)+s$. Notice that, for such $j$, Equality~(\ref{equalityinfinitegamma}) says that $x^{j}_{1} = \gamma_{s}$. 
 To prove Item c., suppose that $\gamma_{s} \in F_{\varepsilon}$. Then, $x^{j}_{1} \in F_{\varepsilon}$ and, since $d(\sigma^{j-1}(x),x^{j}) < \varepsilon$, we must have that $\sigma^{j-1}(x)_{1} = x^{j}_{1}$. As $\sigma^{j-1}(x)_{1} = x_{j}$, this implies Item~c.. Finally, suppose that $\gamma_{s} \notin F_{\varepsilon}$. Then, $x^{j}_{1} \notin F_{\varepsilon}$ and, as $d(\sigma^{j-1}(x),x^{j}) < \varepsilon$, we must have that $\sigma^{j-1}(x)_{1} \notin F_{\varepsilon}$. Since $\sigma^{j-1}(x)_{1} = x_{j}$, this implies Item~d. and the proof is finished. 
\end{proof}

We can now characterize shadowing for Deaconu-Renault systems associated with graphs in terms of the First and Second Infinite Path Conditions. 

\begin{theorem}\label{teoinfiniteshadowing}
Let $(X,\sigma)$ be the Deaconou-Renault system associated with a graph. Then $(X,\sigma)$ has the shadowing property if, and only if, it satisfies the First and the Second Infinite Path Conditions  (see Definitions \ref{FIPC} and \ref{SIPC}).
\end{theorem}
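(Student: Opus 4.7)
The plan is to follow the same architecture as the proof of Theorem~\ref{teofiniteshadowing}, but now distinguishing two kinds of infinite $\delta$-chains according to how often the ``break points'' (indices $n$ with $x^n_1 \notin F_\delta$) occur.

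For the forward direction, assume that $(X,\sigma)$ has the shadowing property. Given $\varepsilon>0$, pick $\delta>0$ that witnesses $\varepsilon$-shadowing. To verify IPC1, given an infinite family $\{\lambda^1,\lambda^2,\ldots\}$ of finite paths satisfying Conditions 1 and 2 of Definition~\ref{FIPC}, extend each $\lambda^i$ to an infinite path $y^i$ (possible since we assume no sinks) and form the chain $(x^{f(i,n)})$ of Lemma~\ref{lambda1lambda2cdots}. By shadowing, there is an infinite path $x$ that $\varepsilon$-shadows this chain; defining $\lambda_j:=x_j$ for all $j\geq 1$, Lemma~\ref{lambda1lambda2cdots} yields Conditions a and b of Definition~\ref{FIPC}. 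The verification of IPC2 is entirely analogous, extending each $\lambda^i$ with $1\le i\le\ell$ and using the given infinite path $\gamma$ to form the chain in Lemma~\ref{lambdagamma}; the shadowing path $x$ then produces the required $\lambda$ via $\lambda_j:=x_j$.

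For the converse, assume IPC1 and IPC2. Given $\varepsilon>0$, fix $k\in\mathbb{N}$ with $\frac{1}{2^{N(k)}}\le \varepsilon$ and choose any $\varepsilon'<\frac{1}{2^{N(k)}}$. Let $\delta>0$ be small enough to serve as the witness for both IPC1 and IPC2 relative to $\varepsilon'$. Take an arbitrary infinite $\delta$-chain $(x^n)_{n\ge 1}$ and split into three cases according to the set
\[
A_\delta:=\{n\ge 2 : x^n_1\notin F_\delta\}.
\]
If $A_\delta=\emptyset$, then (since $(x^n)_n$ is a $\delta$-chain and $\delta\le \frac{1}{2^{N(k)}}$) we have $x^{n+1}_1=\sigma(x^n)_1$ for all $n$, so Corollary~\ref{propdfracchain} produces an infinite path $x$ that $\frac{1}{2^{N(k)}}$-shadows, and hence $\varepsilon$-shadows, the chain. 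If $A_\delta$ is infinite, enumerate it as $n_1<n_2<\cdots$, set $n_0:=1$, and mimic the construction (\ref{lambdaix}) from the proof of Theorem~\ref{teofiniteshadowing} to define an infinite sequence of finite paths
\[
\lambda^i= x^{n_{i-1}}_1 x^{n_{i-1}+1}_1\cdots x^{n_i-1}_1 \sigma(x^{n_i-1})_1,\qquad i\ge 1,
\]
which satisfy Conditions 1 and 2 of IPC1 (by the choice of $A_\delta$). Applying IPC1 yields an infinite path $\lambda$ and we take $x$ to be any infinite path with $x_j=\lambda_j$ for all $j$. If $A_\delta$ is nonempty but finite, enumerate it as $n_1<\cdots<n_{\ell-1}$ and split the chain into $\lambda^1,\ldots,\lambda^\ell$ exactly as in (\ref{lambdaix}) of the finite proof, together with the infinite tail $\gamma$ defined by $\gamma_s:=x^{n_{\ell-1}+s-1}_1$ (the sequence past the final break point, which satisfies $\gamma_1\notin F_\delta$). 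Applying IPC2 yields an infinite path $\lambda$, from which we define $x$ by $x_j:=\lambda_j$ for all $j$.

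The hard step, and the main obstacle, is then to verify that the infinite path $x$ so produced $\varepsilon'$-shadows $(x^n)_{n\ge 1}$ in each of the last two cases. This part reuses verbatim the long case-analysis at the end of the proof of Theorem~\ref{teofiniteshadowing}: fix $s\ge 0$, set $a:=\sigma^s(x)$, $b:=x^{s+1}$, and rule out $d(a,b)\ge \frac{1}{2^{N(k)}}$ by examining whether $b_{\mathfrak{i}_{ab}}\in F_{\varepsilon'}$ or $a_{\mathfrak{i}_{ab}}\in F_{\varepsilon'}$, combined with the fact that $(x^n)_n$ is an $\varepsilon'$-chain and that Conditions a and b of IPC1 (respectively IPC2) force $x_{f(i,n)}$ to be in $F_{\varepsilon'}$ precisely when $x^{f(i,n)}_1$ is, and similarly for the indices coming from the tail $\gamma$ in the IPC2 case. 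The only genuinely new ingredient compared with Theorem~\ref{teofiniteshadowing} is that in the IPC2 case one has to treat the indices $s\ge f(\ell,|\lambda^\ell|-1)$ separately, using Conditions c and d of Definition~\ref{SIPC} instead of the ad hoc boundary condition (\ref{equationx}); since $\gamma$ extends infinitely, no ``$s+\mathfrak{i}_{ab}>m$'' edge-of-chain case appears and the argument is in fact cleaner than in the finite case.
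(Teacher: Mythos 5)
Your proposal follows essentially the same route as the paper's proof: the forward direction via Lemmas~\ref{lambda1lambda2cdots} and \ref{lambdagamma}, and the converse via the three-way split on $A_\delta$ (empty, infinite, finite nonempty), with the same constructions of the $\lambda^i$ and the tail $\gamma$, and the same deferral of the final verification to the case analysis of Theorem~\ref{teofiniteshadowing} adapted through Conditions c.\ and d.\ of Definition~\ref{SIPC}. Apart from a harmless off-by-one in indexing the break points in the finite-$A_\delta$ case, this matches the paper's argument.
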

\begin{proof}

Suppose that $(X,\sigma)$ has the shadowing property. Given $\varepsilon > 0$, let $0 < \delta \leq \varepsilon$ witness $\varepsilon$-shadowing.  
Let $\{\lambda^{1}, \lambda^{2}, \lambda^{3}, \ldots\}$ be an infinite set of finite paths which satisfy Conditions~1. and 2. of Definition \ref{FIPC} and, for each $i \in \mathbb{N}$, let $y^{i}$ be an infinite path such that $y^{i}_{k} = \lambda^{i}_{k}$ for all $1 \leq k \leq |\lambda^{i}|$. With the function $f$ of Lemma~\ref{f(in)infinite} in mind, consider the following infinite chain:
\begin{equation}
    (x^{f(i,n)})_{f(i,n)=1}^{+ \infty}, \  \text{where} \ x^{f(i,n)} = \sigma^{n-1}(y^{i}).
\end{equation}
By Lemma \ref{lambda1lambda2cdots}, such chain is an infinite $\delta$-chain. Let $\lambda \in X$ be an infinite path that $\varepsilon$-shadows it. Again by Lemma~\ref{lambda1lambda2cdots}, $\lambda$ satisfies Itens~a. and b. of 
Definition~\ref{FIPC}. Therefore, $X$ satisfies the First Infinite Path Condition. 

Now we move to the Second Infinite Path Condition. Let $\{\lambda^{1},\lambda^{2},\ldots,\lambda^{\ell}\}$ be a finite set of finite paths, $\gamma$ be an infinite path which satisfy conditions 1., 2., and 3. of Definition \ref{SIPC}, and, for each $1 \leq i \leq \ell$, let $y^{i}$ be an infinite path such that $y^{i}_{k} = \lambda^{i}_{k}$ for all $1 \leq k \leq |\lambda^{i}|$. Now, using the function $f$ as in Definition \ref{f(in)}, we define the following infinite chain: 
\begin{equation}
    (x^{j})_{j=1}^{+ \infty}, \  \text{given by} \ x^{j}= \left\{
		\begin{array}{ll}
		\sigma^{n-1}(y^{i}), & \text{if $j=f(i,n)$, $1 \leq i \leq \ell$ and $1 \leq n < |\lambda^{i}|$};\\
		\sigma^{s-1}(\gamma), & \text{if $j=f(\ell,|\ell|-1) +s$ and $s \in \mathbb{N}$}.\\
				\end{array}
		\right.
\end{equation}

By Lemma \ref{lambdagamma}, such a chain is an infinite $\delta$-chain. Hence, there is an infinite path $x \in X$ that $\varepsilon$-shadows such a chain. Again by Lemma~\ref{lambdagamma}, $x$ is an infinite path that satisfies Conditions~a., b., c., and d. of Definition~\ref{SIPC}. Therefore, $(X,\sigma)$ satisfies the Second Infinite Path Condition.

For the converse, suppose that $(X,\sigma)$ satisfies the First and the Second Infinite Path Conditions. Given $\varepsilon > 0$, let $\varepsilon':=\frac{1}{2^{N(k)}}$ be a real number such that $\frac{1}{2^{N(k)}}\leq \varepsilon$ and $N(k)$ is as in Definition~\ref{pnk}. Let $\delta_{1},\delta_{2} > 0$ arise from the First and the Second Infinite Path Conditions, respectively, in relation to $\varepsilon'$. Choose $\delta = \text{min}\{\delta_{1}, \delta_{2}\}$ and let $(x^{n})_{n=1}^{\infty}$ be an infinite $\delta$-chain. 

Suppose that $x_{1}^{n} \in F_{\delta}$ for all $n \geq 2$. In this case, as $(x^{n})_{n=1}^{\infty}$ is a $\delta$-chain, we must have that $\sigma(x^{n})_{1} = x^{n+1}_{1}$ for all $n \geq 1$. Since $(x^{n})_{n=1}^{\infty}$ is also a $\frac{1}{2^{N(k)}}$-chain we obtain, from Corollary~\ref{propdfracchain}, an infinite path $x \in X$ that $\frac{1}{2^{N(k)}}$-shadows $(x^{n})_{n=1}^{\infty}$. As $\frac{1}{2^{N(k)}} \leq \varepsilon$, $x$ also $\varepsilon$-shadows it.

We are left with the case in which there is an $n \geq 2$ such that $x^{n}_{1} \notin F_{\delta}$. In this case, consider the set 
\begin{equation}
A_{\delta} = \{n \geq 2: x_{1}^{n} \notin F_{\delta}\} \neq \emptyset.    
\end{equation}
We now split the proof into two arguments, depending on whether the cardinality of $A_{\delta}$ is finite.

Suppose first that $A_{\delta}$ is an infinite set, say $A_{\delta}=\{n_{1}, n_{2}, n_{3}, \ldots\}$. Notice that $\sigma(x^{n_{i}-1})_{1} \notin F_{\delta}$ for all $i \in \mathbb{N}$ (because $d\left(\sigma(x^{n_{i}-1}), x^{n_{i}}\right)< \delta$ and $x^{n_{i}}_{1} \notin F_{\delta}$). Also, if $n \geq 2$ and $n \notin A_{\delta}$, then $\sigma(x^{n-1})_{1}= x^{n}_{1}$ (because $x^{n}_{1} \in F_{\delta}$ and $d\left(\sigma(x^{n-1}),x^{n}\right) < \delta$). Given this information, for each $i \in \mathbb{N}$, define the finite path $\lambda^{i}$ by:
\begin{equation}\label{eqlambdaiN}
\lambda^{i}:=
		x^{n_{i-1}}_{1}x^{n_{i-1}+1}_{1}\ldots x^{n_{i}-1}_{1}\sigma(x^{n_{i}-1})_{1}, 
\end{equation}
where $n_{0}:=1$. Notice that:
\begin{equation}\label{|eqlambdaiN|}
|\lambda^{i}|=
		n_{i}-n_{i-1}+1,
\end{equation}
for each natural $i$. By Equalities~(\ref{eqlambdaiN}) and (\ref{|eqlambdaiN|}) we have that:
\begin{equation}\label{f(in)inf}
    f(i,n) = \displaystyle \left(\sum_{j=1}^{i-1}|\lambda^{j}|\right) + n -i + 1 = n_{i-1} + n -1. 
\end{equation}
Now, by Equalities (\ref{eqlambdaiN}) and (\ref{f(in)inf}), we have that (for $(i,n) \in \{(i,n): i \in \mathbb{N}, 1 \leq n < |\lambda^{i}|\}$):
\begin{equation}
    \lambda^{i}_{n} = x_{1}^{n_{i-1}+n-1} = x^{f(i,n)}_{1}.
\end{equation}
We will use these equalities later in the proof. For now, notice that
the finite paths $\{\lambda^{i}:i \in \mathbb{N}\}$ satisfy the first and second conditions of Definition~\ref{FIPC}. 
Hence, as $X$ satisfies the First Infinite Path Condition (Definition \ref{FIPC}), there exists an infinite path $\lambda$ such that Conditions~a. and b. of Definition~\ref{FIPC} are satisfied. 
Denoting the infinite path $\lambda$ also by $x$ (as it is more intuitive at some places), we have that:
\begin{equation}\label{xlambdaxinfinite}
    x_{f(i,n)}=\lambda_{f(i,n)}=\lambda^{i}_{n}=x_{1}^{f(i,n)}, \  \text{if $\lambda_{n}^{i} \in F_{\varepsilon'}$. }
\end{equation}
By the equation above, we have that $\lambda_{s} = x^{s}_{1}$ for all $s \in \mathbb{N}$ such that $s=f(i,n)$ and $\lambda_{n}^{i} \in F_{\varepsilon'}$.
Also, by a similar approach, we have that $\lambda_{s} \notin F_{\varepsilon'}$ if $x^{s}_{1} \notin F_{\varepsilon'}$, for all $s \in \mathbb{N}$.
Now we prove that $x$ $\varepsilon'$-shadows the $\delta$-chain $(x^{n})_{n=1}^{\infty}$ (and hence it also $\varepsilon$-shadows $(x^{n})_{n=1}^{\infty}$). Fix a positive integer $s \geq 0$, consider $a:=\sigma^{s}(x)$, and $b:=x^{s+1}$. To proceed, we split the proof into a few cases and check that $d(a,b)< \varepsilon'$ in all of them. 

If $a=b$, then clearly $d(a,b) < \varepsilon'$. If $a \neq b$, then we consider $\mathfrak{i}_{ab}= \mbox{min}\{i: a_{i} \neq b_{i}\}$. If $\mathfrak{i}_{ab} > k$, then $d(a,b) < \varepsilon'$ (see Definition \ref{pnk} and Proposition \ref{characteridelta} for further details). Suppose that $\mathfrak{i}_{ab} \leq k$. If $a_{\mathfrak{i}_{ab}}$ and $b_{\mathfrak{i}_{ab}}$ both do not belong to $F_{\varepsilon'}$, then we have that $d(a,b)< \varepsilon'$ (because any finite path which is an initial segment of $a$ or $b$, but not the other, must contain an edge which is not in $F_{\varepsilon'}$). We are left with the cases where $b_{\mathfrak{i}_{ab}} \in F_{\varepsilon'}$ or where $a_{\mathfrak{i}_{ab}} \in F_{\varepsilon'}$. In both situations, to conclude that $d(a,b) < \varepsilon'$, we must prove that there is $1 \leq i < \mathfrak{i}_{ab}$ such that $a_{i}=b_{i} \notin F_{\varepsilon'}$. We do this in the two following paragraphs.

Suppose that $x^{s+1}_{\mathfrak{i}_{ab}}=b_{\mathfrak{i}_{ab}} \in F_{\varepsilon'}$. Moreover, suppose that for all $1 \leq i < \mathfrak{i}_{ab}$ we have $a_{i}=b_{i} \in F_{\varepsilon'}$. As $(x^{n})_{n=1}^{\infty}$ is an $\varepsilon'$-chain, we must have that $x^{s+j}_{1} = x^{s+1}_{j}$, for all $1 \leq j \leq \mathfrak{i}_{ab}$. In particular, $x^{s+\mathfrak{i}_{ab}}_{1} = x^{s+1}_{\mathfrak{i}_{ab}} = b_{\mathfrak{i}_{ab}} \in F_{\varepsilon'}$. Then, $a_{\mathfrak{i}_{ab}} = x_{s+\mathfrak{i}_{ab}} \neq x^{s+\mathfrak{i}_{ab}}_{1} \in F_{\varepsilon'}$, which contradicts Equation (\ref{xlambdaxinfinite}). Therefore, there exists an index $i$, $1 \leq i < \mathfrak{i}_{ab}$, such that $a_{i}=b_{i} \notin F_{\varepsilon'}$. Hence, $d(a,b)< \varepsilon'$. 

Next, suppose that $\sigma^{s}(x)_{\mathfrak{i}_{ab}}=a_{\mathfrak{i}_{ab}}\in F_{\varepsilon'}$. We may assume that $x^{s+1}_{\mathfrak{i}_{ab}}=b_{\mathfrak{i}_{ab}} \notin F_{\varepsilon'}$ (as we have dealt with the case $b_{\mathfrak{i}_{ab}} \in F_{\varepsilon'}$ in the previous paragraph). Suppose that for all $1 \leq i < \mathfrak{i}_{ab}$ we have $a_{i}=b_{i} \in F_{\varepsilon'}$. As $(x^{n})_{n=1}^{\infty}$ is a $\varepsilon'$-chain, we must have that $x^{s+j}_{\mathfrak{i}_{ab}-j+1} \notin F_{\varepsilon'}$, for all $1 \leq j \leq \mathfrak{i}_{ab}$. In particular, $x^{s+\mathfrak{i}_{ab}}_{1} \notin F_{\varepsilon'}$. Then, 
\begin{equation}\label{wedonot}
    a_{\mathfrak{i}_{ab}} = x_{s + \mathfrak{i}_{ab}} \neq x^{s+\mathfrak{i}_{ab}}_{1}.
\end{equation} If we write $s+\mathfrak{i}_{ab}=f(i,n)$, then we have that $\lambda^{i}_{n} \in F_{\varepsilon'}$ and hence, by Equality~(\ref{xlambdaxinfinite}), we must have that $x_{f(i,n)} = x^{f(i,n)}_{1}$. But notice that, in this case, Equality~(\ref{wedonot}) contradicts Equality~(\ref{xlambdaxinfinite}). Therefore, there exists an index $i$, $1 \leq i < \mathfrak{i}_{ab}$, such that $a_{i}=b_{i} \notin F_{\varepsilon'}$, which allow us to conclude that $d(a,b)< \varepsilon'$ as desired.

To finish the proof, we must consider the case when $A_{\delta} = \{n \geq 2: x_{1}^{n} \notin F_{\delta}\}$ is a finite set, say $A_{\delta} = \{n_{1}, n_{2}, \ldots, n_{\ell}\}$. Notice that $\sigma(x^{n_{i}-1})_{1} \notin F_{\delta}$ for all $1 \leq i \leq \ell$ (because $d\left(\sigma(x^{n_{i}-1}), x^{n_{i}}\right)< \delta$ and $x^{n_{i}}_{1} \notin F_{\delta}$). Moreover, if $n \geq 2$ and $n \notin A_{\delta}$, then $\sigma(x^{n-1})_{1}= x^{n}_{1}$ (because $x^{n}_{1} \in F_{\delta}$ and $d\left(\sigma(x^{n-1}),x^{n}\right) < \delta$). Given this information, define the infinite path $\gamma$ by:
\begin{equation}\label{gammainfinite}
    \gamma_{s} := x_{1}^{n_{\ell}+s-1} = x^{f(\ell, |\lambda^{\ell}|-1) + s}_{1}, \text{for all} \ s \in \mathbb{N}
\end{equation}
and, for each $1 \leq i \leq \ell$, define the finite path $\lambda^{i}$ by:
\begin{equation}
\lambda^{i}:=
		x^{n_{i-1}}_{1}x^{n_{i-1}+1}_{1}\ldots x^{n_{i}-1}_{1}\sigma(x^{n_{i}-1})_{1},\\
\end{equation}
where $n_{0}:=1$. Notice that, for each $1 \leq i \leq \ell$, we have that $|\lambda^{i}|= n_{i}-n_{i-1}+1$ (which is similar to Equality~(\ref{eqlambdaiN})).  Furthermore, the infinite path $\gamma$ and the finite paths $\{\lambda^{1}, \lambda^{2}, \ldots, \lambda^{\ell}\}$ satisfy the Conditions~1. to 3. of Definition~\ref{SIPC}.
Then, since $X$ satisfies the Second Infinite Path Condition,  there is an infinite path $\lambda$ that satisfies itens a. to d. of Definition \ref{SIPC}.

Denoting, as before, the infinite path $\lambda$ also by $x$, we will prove that  $x:=\lambda$ is a path that $\varepsilon$-shadows the chain $(x^{n})_{n=1}^{\infty}$. 

Fix a positive integer $j \geq 0$ and consider $a:=\sigma^{j}(x)$ and $b:=x^{j+1}$. As before, we split the proof in a few cases.
 
 The case $0 \leq j \leq f(\ell,|\lambda^{\ell}|-1)-1$ is similar to the case considered when $\# A_{\delta} = \infty$ and therefore we omit its proof. We assume $j > f(\ell,|\lambda^{\ell}|-1)-1$. 
 
 If $j = f(\ell,|\lambda^{\ell}|-1)$, then we have that $x^{j+1}_{1} = x^{n_{\ell}}_{1}$. Recall that $x^{n_{\ell}}_{1} \notin F_{\delta}$. By the definition of $\gamma$ (see Equality~ \ref{gammainfinite}), we have that $\gamma_{1} = x^{n_{\ell}}_{1} \notin F_{\delta}$. Hence $\gamma_{1} \notin F_{\varepsilon'}$ and, by item d. of the Second Infinite Path Condition, we have that $\sigma^{j}(x)_{1} = x_{j+1} \notin F_{\varepsilon'}.$ Since $\sigma^{j}(x)_{1}$ and $x^{j+1}_{1}$ do not belong to $ F_{\varepsilon'}$, we have that $d(\sigma^{j}(x), x^{j+1}) < \varepsilon'$. 
 
 Finally, consider $j > f(\ell,|\lambda^{\ell}|-1)$, say $j = f(\ell,|\lambda^{\ell}|-1) + s$ for $s \in \mathbb{N}$. If $a=b$ then clearly $d(a,b) < \varepsilon'$. If $a \neq b$, then we consider $\mathfrak{i}_{ab}= \mbox{min}\{i: a_{i} \neq b_{i}\}$. If $\mathfrak{i}_{ab} > k$, then $d(a,b) < \varepsilon'$ (see Definition \ref{pnk} and Proposition \ref{characteridelta} for further details). Suppose that $\mathfrak{i}_{ab} \leq k$. If $a_{\mathfrak{i}_{ab}}$ and $b_{\mathfrak{i}_{ab}}$ both do not belong to $F_{\varepsilon'}$, then we have that $d(a,b)< \varepsilon'$ (because any finite path which is an initial segment of $a$ or $b$, but not the other, must contain an edge which is not in $F_{\varepsilon'}$). We need to check what happens when $b_{\mathfrak{i}_{ab}} \in F_{\varepsilon'}$ and when $a_{\mathfrak{i}_{ab}} \in F_{\varepsilon'}$. As before, in both situations, we must prove that there is $1 \leq i < \mathfrak{i}_{ab}$ such that $a_{i}=b_{i} \notin F_{\varepsilon'}$.
 
 Assume that $x^{j+1}_{\mathfrak{i}_{ab}}=b_{\mathfrak{i}_{ab}} \in F_{\varepsilon'}$. Furthermore,  suppose that for all $1 \leq i < \mathfrak{i}_{ab}$ we have $a_{i}=b_{i} \in F_{\varepsilon'}$. As $(x^{n})_{n=1}^{\infty}$ is a $\varepsilon'$-chain, we must have that $x^{j+r}_{1} = x^{j+1}_{r}$ for all $1 \leq r \leq \mathfrak{i}_{ab}$. In particular, $x^{j+\mathfrak{i}_{ab}}_{1} = x^{j+1}_{\mathfrak{i}_{ab}} = b_{\mathfrak{i}_{ab}} \in F_{\varepsilon'}$. Then, $a_{\mathfrak{i}_{ab}} = x_{j+\mathfrak{i}_{ab}} \neq x^{j+\mathfrak{i}_{ab}}_{1} \in F_{\varepsilon'}$. By Equality (\ref{gammainfinite}), we have that $\gamma_{s + \mathfrak{i}_{ab}} = x^{j+\mathfrak{i}_{ab}}_{1}$. Hence $\gamma_{s + \mathfrak{i}_{ab}} \in F_{\varepsilon'}$, but $x_{f(\ell,|\lambda^{\ell}|-1) + s + \mathfrak{i}_{ab}} \neq \gamma_{s + \mathfrak{i}_{ab}}$, which contradicts item c. of the Second Infinite Path Condition. Therefore, there exists an index $i$, $1 \leq i < \mathfrak{i}_{ab}$, such that $a_{i}=b_{i} \notin F_{\varepsilon'}$, which implies that $d(a,b)< \varepsilon'$.
 
 For the final case, assume that $\sigma^{j}(x)_{\mathfrak{i}_{ab}}=a_{\mathfrak{i}_{ab}}\in F_{\varepsilon'}$. We may assume that  $x^{j+1}_{\mathfrak{i}_{ab}}=b_{\mathfrak{i}_{ab}} \notin F_{\varepsilon'}$ (because we have dealt with the case $b_{\mathfrak{i}_{ab}} \in F_{\varepsilon'}$ in the previous paragraph). Suppose that for all $1 \leq i < \mathfrak{i}_{ab}$ we have $a_{i}=b_{i} \in F_{\varepsilon'}$. As $(x^{n})_{n=1}^{\infty}$ is a $\varepsilon'$-chain, we must have that $x^{j+r}_{\mathfrak{i}_{ab}-r+1} \notin F_{\varepsilon'}$ for all $1 \leq r \leq \mathfrak{i}_{ab}$. In particular, $x^{j+\mathfrak{i}_{ab}}_{1} \notin F_{\varepsilon'}$. As $\gamma_{s+\mathfrak{i}_{ab}} = x^{j+\mathfrak{i}_{ab}}_{1}$, item d. of the Second Infinite Path Condition implies that $x_{j+\mathfrak{i}_{ab}} \notin F_{\varepsilon'}$, what contradicts the assumption at the beginning of this paragraph. Therefore, there exists an index $i$, $1 \leq i < \mathfrak{i}_{ab}$, such that $a_{i}=b_{i} \notin F_{\varepsilon'}$, and hence $d(a,b)< \varepsilon'$ and the proof is finished.
\end{proof}

\section{Examples of shadowing for graphs}\label{lagarta}

In this section, we apply our characterization of shadowing for a number of examples. In fact, we single out classes of graphs that present the shadowing property and classes which do not.

\subsection{The rose with infinite petals.} 

The rose with infinite petals is the graph that consists of only one vertex and an infinite countable number of edges, see the picture below. This graph is associated with the famous Cuntz algebra $O_\infty$. We show below that the associated Deaconu-Renault system has the shadowing property.

\begin{center} 
\definecolor{ududff}{rgb}{0.30196078431372547,0.30196078431372547,1}
\definecolor{ttqqtt}{rgb}{0.2,0,0.2}
\begin{tikzpicture}[line cap=round,line join=round,>=triangle 45,x=1cm,y=1cm]
\draw [line width=0.4pt] (-6,2) circle (1cm);
\draw [line width=0.4pt] (-6.0225,1.78) circle (0.7803244517506808cm);
\draw [line width=0.4pt] (-6.0114069828035435,1.5765346534653466) circle (0.5766474884217285cm);
\draw [->,line width=0.4pt] (-5.961781599302772,2.1510428320106905) -- (-6.039620787712179,2.1524915156656896);
\draw [->,line width=0.4pt] (-5.958366678740344,2.5576844907185734) -- (-6.042407920026582,2.560070461381673);
\draw [->,line width=0.4pt] (-5.966902636245555,2.999452132176677) -- (-6.043190021331042,2.999066875668203);
\begin{scriptsize}
\draw[color=black] (-6,2.8) node {$e_{3}$};
\draw [fill=ttqqtt] (-6,1) circle (2pt);
\draw[color=ttqqtt] (-6,1.2) node {$v$};
\draw[color=black] (-6,2.4) node {$e_{2}$};
\draw[color=black] (-6,2) node {$e_{1}$};
\draw [fill=ududff] (-6,3.8) circle (0.5pt);
\draw [fill=ttqqtt] (-6,3.55) circle (0.5pt);
\draw [fill=ttqqtt] (-6,3.3) circle (0.5pt);
\end{scriptsize}
\end{tikzpicture}
\end{center}
\begin{proposition}\label{coroalledges}
Let $(X,\sigma)$ be the Deaconu-Renault system associated with the rose with infinite petals. Then, $(X,\sigma)$ has the shadowing property.
\end{proposition}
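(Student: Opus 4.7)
The plan is to invoke Theorem~\ref{teoinfiniteshadowing}, so it suffices to verify that the Deaconu-Renault system associated with the rose with infinite petals satisfies both the First and the Second Infinite Path Conditions (Definitions~\ref{FIPC} and~\ref{SIPC}). The crucial structural feature of the rose is that $E^{0}$ consists of a single vertex $v$, so \emph{every} concatenation of edges is a legal path: the source/range compatibility constraint built into the definitions is vacuously satisfied. A second key observation is that $F_{\varepsilon}$ is always a finite subset of the infinite set $E^{1}$ (see Definition~\ref{Fepsilon}), so for each $\varepsilon>0$ we can fix an auxiliary edge $e_{\star}\in E^{1}\setminus F_{\varepsilon}$.

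To verify the First Infinite Path Condition, I would take $\delta:=\varepsilon$. Given any infinite family $\{\lambda^{1},\lambda^{2},\ldots\}$ of finite paths satisfying Conditions~1 and~2 of Definition~\ref{FIPC}, let $f$ be the bijection of Lemma~\ref{f(in)infinite} and, for each $j\in\NN$, let $(i,n)=f^{-1}(j)$. Define
\[
\lambda_{j}:=\begin{cases}\lambda^{i}_{n}, & \text{if } \lambda^{i}_{n}\in F_{\varepsilon},\\ e_{\star}, & \text{if } \lambda^{i}_{n}\notin F_{\varepsilon}.\end{cases}
\]
Because the rose has a single vertex, the sequence $\lambda=\lambda_{1}\lambda_{2}\lambda_{3}\ldots$ is a bona fide infinite path in $X$, and items~a and~b of Definition~\ref{FIPC} hold by construction.

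For the Second Infinite Path Condition, I would again set $\delta:=\varepsilon$. Given a finite family $\{\lambda^{1},\ldots,\lambda^{\ell}\}$ together with an infinite path $\gamma$ satisfying Conditions~1--3 of Definition~\ref{SIPC}, I would use the function $f$ of Definition~\ref{f(in)} to prescribe $\lambda_{f(i,n)}$ for $(i,n)\in\mathrm{Dom}(f)$ by exactly the same rule as above, and then for every $s\in\NN$ set $\lambda_{f(\ell,|\lambda^{\ell}|-1)+s}:=\gamma_{s}$ if $\gamma_{s}\in F_{\varepsilon}$ and $\lambda_{f(\ell,|\lambda^{\ell}|-1)+s}:=e_{\star}$ otherwise. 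Since every edge can follow every edge in the rose, the resulting sequence is a valid infinite path, and items~a--d of Definition~\ref{SIPC} are immediate from the definition.

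In a general graph the obstacle in running such a construction would be precisely the source/range compatibility when the auxiliary edge $e_{\star}$ is inserted in the ``non-$F_{\varepsilon}$'' slots. For the rose this obstacle disappears, which is what makes the pointwise definition of $\lambda$ go through and thus yields the shadowing property by Theorem~\ref{teoinfiniteshadowing}.
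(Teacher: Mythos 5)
Your proof is correct and follows essentially the same route as the paper: both verify the First and Second Infinite Path Conditions with $\delta=\varepsilon$ and appeal to Theorem~\ref{teoinfiniteshadowing}, relying on the fact that in the rose every sequence of edges is a legal path. The only cosmetic difference is that the paper simply sets $\lambda_{f(i,n)}:=\lambda^{i}_{n}$ in all slots (which already satisfies items a and b of both conditions), whereas you substitute an auxiliary edge $e_{\star}\notin F_{\varepsilon}$ in the non-$F_{\varepsilon}$ positions; this is harmless but unnecessary.
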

\begin{proof}

 We show that $X$ satisfies the First and Second Infinite Path Conditions (Definitions~\ref{FIPC} and \ref{SIPC}), which are equivalent to the shadowing property (Theorem \ref{teoinfiniteshadowing}). 

Given $\varepsilon > 0$, pick $\delta = \varepsilon$.  Let $\{\lambda^{1}, \lambda^{2}, \ldots\}$ be an infinite set of finite paths which satisfy Conditions 1. and 2. of Definition \ref{FIPC}, which are: 
\begin{enumerate}
    \item $|\lambda^{i}| \geq 2$, for all $i \in \mathbb{N}$;
    \item $(F_{\delta})^{c} \cap [\{\lambda^{i}_{j}: \  i \in \mathbb{N}, \ 1 \leq j \leq |\lambda^{i}| \}-\{\lambda^{1}_{1}\}]= \{\lambda^{i+1}_{1}: i \in \mathbb{N}\} \cup \{\lambda^{i}_{|\lambda^{i}|}: i \in \mathbb{N}\}$.
    \end{enumerate}

Notice that the infinite path $\lambda$ given by $\lambda_{j} := \lambda^{i}_{n}$, where $(i,n)$ is the unique pair such that $1 \leq n < |\lambda^{i}|$ and $f(i,n)=j$ (see Lemma~\ref{f(in)infinite} and Definition~\ref{f(in)} to recall the function $f$) is well defined. It is immediate that if $\lambda^{i}_{n} \in F_{\varepsilon}$ then $\lambda_{f(i,n)} = \lambda^{i}_{n}$, and if $\lambda^{i}_{n} \notin F_{\varepsilon}$ then $\lambda_{f(i,n)} \notin F_{\varepsilon}$. We conclude that  $(X,\sigma)$ satisfies the First Infinite Path Condition. The proof that $(X,\sigma)$ satisfies the Second Infinite Path Condition is analogous and we leave it to the reader.
\end{proof}


\subsection{Wandering graphs} 

In this section, we identify a class of graphs, which we call wandering graphs, which present the shadowing property.

\begin{definition}\label{defiwandering}
An infinite graph $E$ is said to be a wandering graph if it satisfies the following condition (which we call Wandering Condition):\\
For all $\varepsilon > 0$ there is $\delta > 0$ such that, if $\gamma$ is a finite path such that $\gamma_{1} \notin F_{\delta}$, then $\gamma_{|\gamma|} \notin F_{\varepsilon}$.    
\end{definition}
\begin{remark}
    If $E$ is a wandering graph and $\lambda$ is an infinite path such that $\lambda_{1} \notin F_{\delta}$, then $\lambda_{j} \notin F_{\varepsilon}$ for all natural $j > 1$.
\end{remark}

Below we present a simple example of an infinite wandering graph.

\begin{example}\label{examplewandering} Let $E$ be the graph with a countable set of vertices, say $\{u_i:i\geq 1\}$, and edges $e_i$ such that $s(e_i)=u_i$ and $r(e_i)=u_{i+1}$, $i\geq 1$. Clearly, this graph is wandering. We present a picture of it below.

\begin{center}
\definecolor{xdxdff}{rgb}{0.49019607843137253,0.49019607843137253,1}
\definecolor{uuuuuu}{rgb}{0.26666666666666666,0.26666666666666666,0.26666666666666666}
\begin{tikzpicture}[scale=1.5][line cap=round,line join=round,>=triangle 45,x=1cm,y=1cm]
\draw [->,line width=1.2pt] (0,0) -- (1,0);
\draw [->,line width=1.2pt] (1,0) -- (2,0);
\draw [->,line width=1.2pt] (2,0) -- (3,0);
\draw [->,line width=1.2pt] (3,0) -- (4,0);
\draw [->,line width=1.2pt] (4,0) -- (5,0);
\begin{scriptsize}
\draw [fill=uuuuuu] (0,0) circle (0.8pt);
\draw[color=uuuuuu] (0,0.15) node {$u_{1}$};
\draw [fill=uuuuuu] (1,0) circle (0.8pt);
\draw[color=uuuuuu] (1,0.15) node {$u_{2}$};
\draw[color=black] (0.5,-0.15) node {$e_{1}$};
\draw [fill=uuuuuu] (2,0) circle (0.8pt);
\draw[color=uuuuuu] (2,0.15) node {$u_{3}$};
\draw[color=black] (1.5,-0.15) node {$e_{2}$};
\draw [fill=uuuuuu] (3,0) circle (0.8pt);
\draw[color=uuuuuu] (3,0.15) node {$u_{4}$};
\draw[color=black] (2.5,-0.15) node {$e_{3}$};
\draw [fill=uuuuuu] (4,0) circle (0.8pt);
\draw[color=uuuuuu] (4,0.15) node {$u_{5}$};
\draw[color=black] (3.5,-0.15) node {$e_{4}$};
\draw [fill=uuuuuu] (5,0) circle (0.8pt);
\draw[color=uuuuuu] (5,0.15) node {$u_{6}$};
\draw[color=black] (4.5,-0.15) node {$e_{5}$};
\draw [fill=xdxdff] (5.2,0) circle (0.5pt);
\draw [fill=xdxdff] (5.4,0) circle (0.5pt);
\draw [fill=xdxdff] (5.6,0) circle (0.5pt);
\end{scriptsize}
\end{tikzpicture}
\end{center}    
\end{example}

Next, we prove that the Deaconu-Renault system associated with a wandering graph always presents the shadowing property.

\begin{proposition}\label{propwandering}
Let $E$ be an infinite graph that satisfies the Wandering Condition
and let $(X,\sigma)$ be the associated Deaconu-Renault system. Then, $(X,\sigma)$ has the shadowing property.   
\end{proposition}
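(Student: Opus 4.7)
The plan is to verify the First and Second Infinite Path Conditions (Definitions~\ref{FIPC} and~\ref{SIPC}) and then invoke Theorem~\ref{teoinfiniteshadowing}. The Wandering Condition will do most of the work: it allows us to take \emph{any} infinite extension of the first finite path $\lambda^1$ as our shadowing path, since the paths appearing after it (and the infinite tail $\gamma$ in IPC2) will have all their edges automatically outside $F_\varepsilon$.

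Fix $\varepsilon > 0$ and apply Definition~\ref{defiwandering} to obtain $\delta > 0$, shrinking it if necessary so that $\delta \leq \varepsilon$. Two consequences of this choice will be used repeatedly: (i) $F_\varepsilon \subseteq F_\delta$, and (ii) by applying the Wandering Condition to every prefix, any (finite or infinite) path $\mu = \mu_1\mu_2\cdots$ with $\mu_1 \notin F_\delta$ satisfies $\mu_j \notin F_\varepsilon$ for every $j \geq 1$.

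For the First Infinite Path Condition, let $\{\lambda^1,\lambda^2,\ldots\}$ satisfy Conditions~1.\ and~2.\ of Definition~\ref{FIPC}. Condition~2.\ forces $\lambda^i_1 \notin F_\delta$ for every $i \geq 2$ and $\lambda^i_{|\lambda^i|} \notin F_\delta$ for every $i \geq 1$, so consequence~(ii) yields $\lambda^i_n \notin F_\varepsilon$ for every $i \geq 2$ and every $1 \leq n \leq |\lambda^i|$. I will define $\lambda \in \p^\infty$ by setting $\lambda_n := \lambda^1_n$ for $1 \leq n \leq |\lambda^1|$ and extending arbitrarily to an infinite path (possible because the graph has no sinks). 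For $i = 1$, items a.\ and b.\ hold trivially because $\lambda_{f(1,n)} = \lambda_n = \lambda^1_n$ matches regardless of whether $\lambda^1_n \in F_\varepsilon$. For $i \geq 2$, every position $f(i,n)$ satisfies $f(i,n) \geq |\lambda^1|$, so we only need $\lambda_{f(i,n)} \notin F_\varepsilon$; this follows from $\lambda_{|\lambda^1|} = \lambda^1_{|\lambda^1|} \notin F_\delta$ together with consequence~(ii) applied to the path $\lambda^1_{|\lambda^1|}\lambda_{|\lambda^1|+1}\lambda_{|\lambda^1|+2}\cdots$, whose first edge lies outside $F_\delta$.

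The argument for the Second Infinite Path Condition is essentially identical: given finite paths $\lambda^1,\ldots,\lambda^\ell$ and an infinite path $\gamma$ satisfying Conditions~1.--3.\ of Definition~\ref{SIPC}, take $\lambda$ to be any infinite extension of $\lambda^1$ as above. Items a.\ and~b.\ are verified exactly as in the IPC1 argument; item~c.\ is vacuously true because Condition~3.\ and consequence~(ii) imply $\gamma_s \notin F_\varepsilon$ for all $s$; and item~d.\ follows because every position $f(\ell,|\lambda^\ell|-1)+s$ exceeds $|\lambda^1|-1$, so the corresponding edge of $\lambda$ lies outside $F_\varepsilon$ by the same prefix argument. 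The only delicate point is item~a.\ for $i = 1$: the edges $\lambda^1_n$ with $2 \leq n < |\lambda^1|$ lie in $F_\delta$ but may still belong to $F_\varepsilon$ and must be matched verbatim, which is exactly why $\lambda$ is built to copy $\lambda^1$ on its initial segment. Once both infinite path conditions are established, Theorem~\ref{teoinfiniteshadowing} delivers the shadowing property.
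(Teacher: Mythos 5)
Your proposal is correct and follows essentially the same route as the paper: both fix $\delta$ from the Wandering Condition, observe that any path whose first edge lies outside $F_\delta$ has all its edges outside $F_\varepsilon$, and then take an infinite extension of $\lambda^1$ as the required path for IPC1 (and IPC2), concluding via Theorem~\ref{teoinfiniteshadowing}. The only difference is cosmetic: you spell out the IPC2 verification and make explicit that \emph{any} extension of $\lambda^1$ works because $\lambda^1_{|\lambda^1|}\notin F_\delta$, whereas the paper states the required properties of $\lambda$ and leaves IPC2 to the reader.
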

\begin{proof}

We show that $X$ satisfies the First Infinite Path Condition (Definition \ref{FIPC}) and leave the proof that it satisfies the Second Infinite Path Condition (Definition \ref{SIPC}) to the reader, as it is analogous to what we do below.

Given $\varepsilon>0$, Let $\delta$ be the number associated with $\varepsilon$ in the Wandering Condition. 

Notice first that, as $E$ satisfies the Wandering Condition, if a finite path $\gamma$ is such that $\gamma_{1} \notin F_{\delta}$, then $\gamma_{j} \notin F_{\varepsilon}$ for all $1 \leq j \leq |\gamma|$. Now, let $\{\lambda^{1}, \lambda^{2}, \ldots\}$ be an infinite set of finite paths which satisfy conditions 1. and 2. of the First Infinite Path Condition. Since $\lambda^{i+1}_{1} \notin F_{\delta}$ for all $i \geq 1$, we must have that $\lambda^{i+1}_{j} \notin F_{\varepsilon}$ for all $1 < j \leq |\lambda^{i+1}|$. Then, any infinite path $\lambda$ which satisfies $\lambda_{i} = \lambda^{1}_{i}$, for all $1 \leq i \leq |\lambda^{1}|$, and $\lambda_{j} \notin F_{\varepsilon}$, for all $j > |\lambda^{1}|$, satisfies Conditions~a. and b. of Definition~\ref{FIPC}. The existence of such a path follows from the fact that $E$ has no sinks and satisfies the Wandering Condition. 

\end{proof}

\subsection{Attractor graphs} 

In this section, we completely characterize the shadowing property for graphs that contain an attractor subgraph (see the definition below for the notion of an attractor subgraph).

\begin{definition}\label{attractor}
Let $E$ be a graph and $(X,\sigma)$ be the associated Deaconu-Renault system. A subgraph $E' \subset E$ is an attractor subgraph (of $E$) if for all $x \in X$, there is an $n \in \mathbb{N}$ (which depends on $x$) such that $x_{n} \in E'^{1}$.    
\end{definition}

\begin{example}\label{renewal} Below we picture the infinite renewal shift, see \cite{BisEx} for more details. This graph contains an attractor subgraph, for example, the subgraph $\{\{u_1,u_2\},\{e_2\}\}$.
\begin{center}
\begin{tikzpicture}[scale=3][line cap=round,line join=round,>=triangle 45,x=1cm,y=1cm]
\draw [->,line width=1.2pt] (1.5,0.5) -- (1,0.5);
\draw [->,line width=1.2pt] (2,0.5) -- (1.5,0.5);
\draw [->,line width=1.2pt] (2.5,0.5) -- (2,0.5);
\draw [->,line width=1.2pt] (3,0.5) -- (2.5,0.5);
\draw [->,line width=1.2pt] (3.5,0.5) -- (3,0.5);
\draw [->,line width=1.2pt] (4,0.5) -- (3.5,0.5);
\draw [shift={(1.25,0.5)},line width=0.4pt]  plot[domain=0:3.141592653589793,variable=\t]({1*0.25*cos(\t r)+0*0.25*sin(\t r)},{0*0.25*cos(\t r)+1*0.25*sin(\t r)});
\draw [shift={(1.5,0.5)},line width=0.4pt]  plot[domain=0:3.141592653589793,variable=\t]({1*0.5*cos(\t r)+0*0.5*sin(\t r)},{0*0.5*cos(\t r)+1*0.5*sin(\t r)});
\draw [shift={(1.75,0.5)},line width=0.4pt]  plot[domain=0:3.141592653589793,variable=\t]({1*0.75*cos(\t r)+0*0.75*sin(\t r)},{0*0.75*cos(\t r)+1*0.75*sin(\t r)});
\draw [shift={(2,0.5)},line width=0.4pt]  plot[domain=0:3.141592653589793,variable=\t]({1*1*cos(\t r)+0*1*sin(\t r)},{0*1*cos(\t r)+1*1*sin(\t r)});
\draw [shift={(2.25,0.5)},line width=0.4pt]  plot[domain=0:3.141592653589793,variable=\t]({1*1.25*cos(\t r)+0*1.25*sin(\t r)},{0*1.25*cos(\t r)+1*1.25*sin(\t r)});
\draw [shift={(2.5,0.5)},line width=0.4pt]  plot[domain=0:3.141592653589793,variable=\t]({1*1.5*cos(\t r)+0*1.5*sin(\t r)},{0*1.5*cos(\t r)+1*1.5*sin(\t r)});
\draw [->,line width=1.5pt] (1.21,0.7358) -- (1.3,0.7358);
\draw [->,line width=1.5pt] (1.6700305455981517,1.24572440375964) -- (1.7825617996533436,1.249292819399289);
\draw [->,line width=1.5pt] (1.9063977935529601,1.495609676001718) -- (2,1.5);
\draw [->,line width=1.5pt] (2.162008203816862,1.746899131367275) -- (2.2800747884935095,1.7496381504647938);
\draw [->,line width=1.5pt] (2.399767163413584,1.9966473794684707) -- (2.5,2);
\draw [->,line width=1.5pt] (1.4046563131651248,0.9908254082468972) -- (1.5,1);
\begin{scriptsize}
\draw [fill=black] (1.5,0.5) circle (0.6pt);
\draw[color=black] (1.56,0.58) node {$u_{2}$};
\draw [fill=black] (1,0.5) circle (0.6pt);
\draw[color=black] (0.92,0.58) node {$u_{1}$};
\draw[color=black] (1.288641975308644,0.42) node {$e_{2}$};
\draw [fill=black] (2,0.5) circle (0.6pt);
\draw[color=black] (2.06,0.58) node {$u_{3}$};
\draw[color=black] (1.7785185185185217,0.42) node {$e_{4}$};
\draw [fill=black] (2.5,0.5) circle (0.6pt);
\draw[color=black] (2.56,0.58) node {$u_{4}$};
\draw[color=black] (2.2920987654321032,0.42) node {$e_{6}$};
\draw [fill=black] (3,0.5) circle (0.6pt);
\draw[color=black] (3.06,0.58) node {$u_{5}$};
\draw[color=black] (2.770123456790129,0.42) node {$e_{8}$};
\draw [fill=black] (3.5,0.5) circle (0.6pt);
\draw[color=black] (3.56,0.58) node {$u_{6}$};
\draw[color=black] (3.2718518518518587,0.42) node {$e_{10}$};
\draw [fill=black] (4,0.5) circle (0.6pt);
\draw[color=black] (4.06,0.58) node {$u_{7}$};
\draw[color=black] (3.7577777777777857,0.42) node {$e_{12}$};
\draw [fill=black] (4.194320987654331,0.503) circle (0.2pt);
\draw [fill=black] (4.401728395061739,0.503) circle (0.2pt);
\draw [fill=black] (4.603209876543221,0.503) circle (0.2pt);
\draw[color=black] (1.3004938271604958,0.8316872427983553) node {$e_{1}$};
\draw[color=black] (1.5493827160493854,1.0845267489711952) node {$e_{3}$};
\draw[color=black] (1.7982716049382748,1.3334156378600845) node {$e_{5}$};
\draw[color=black] (2.051111111111115,1.582304526748974) node {$e_{7}$};
\draw[color=black] (2.3,1.8311934156378633) node {$e_{9}$};
\draw[color=black] (2.560740740740746,2.0840329218107034) node {$e_{11}$};
\end{scriptsize}
\end{tikzpicture}
\end{center}
\end{example}

Next, we describe a class of graphs that always present the shadowing property. To make this precise, given a graph $E$ and an edge $e\in E^1$, we denote the immediate follower set of $e$ by $F^1(e)$. More precisely, $$F^1(e)=\{f\in E^1:s(f)=r(e)\}.$$ 

\begin{definition}\label{efifsp}
We say that a graph $E$ has the eventually constrained immediate follower set property if there exists a natural number $k$ such that if $j>k$ and $e_{\ell}\in F^1(e_j)$, then $\ell \leq k$.
\end{definition}

\begin{example}
    An example of a graph that satisfies the eventually constrained immediate follower set property is given below.

\begin{center}
\begin{tikzpicture}[scale=3][line cap=round,line join=round,>=triangle 45,x=1cm,y=1cm]
\draw [shift={(1.25,0.5)},line width=0.4pt]  plot[domain=0:3.141592653589793,variable=\t]({1*0.25*cos(\t r)+0*0.25*sin(\t r)},{0*0.25*cos(\t r)+1*0.25*sin(\t r)});
\draw [shift={(1.5,0.5)},line width=0.4pt]  plot[domain=0:3.141592653589793,variable=\t]({1*0.5*cos(\t r)+0*0.5*sin(\t r)},{0*0.5*cos(\t r)+1*0.5*sin(\t r)});
\draw [shift={(1.75,0.5)},line width=0.4pt]  plot[domain=0:3.141592653589793,variable=\t]({1*0.75*cos(\t r)+0*0.75*sin(\t r)},{0*0.75*cos(\t r)+1*0.75*sin(\t r)});
\draw [shift={(2,0.5)},line width=0.4pt]  plot[domain=0:3.141592653589793,variable=\t]({1*1*cos(\t r)+0*1*sin(\t r)},{0*1*cos(\t r)+1*1*sin(\t r)});
\draw [line width=0.4pt] (0.8013991769547331,0.5018106995884788) circle (0.19860907720257187cm);
\draw [->,line width=1.7pt] (0.7624487323304036,0.6965629227101269) -- (0.8378433692574331,0.6970474440672294);
\draw [->,line width=1.7pt] (1.2963392745601514,0.7456678074824598) -- (1.2196710584490613,0.7481534914209343);
\draw [->,line width=1.7pt] (1.8203660696199342,1.2466917812901401) -- (1.7363771106139296,1.2498762677167314);
\draw [->,line width=1.7pt] (2.0602197293389706,1.4981851452502892) -- (1.9705619590617145,1.4995666069580935);
\draw [->,line width=1.7pt] (1.5389669119069032,0.9984792671480326) -- (1.4570981884949739,0.9981560343603069);
\begin{scriptsize}
\draw [fill=black] (1.5,0.5) circle (0.6pt);
\draw[color=black] (1.57,0.55) node {$u_{2}$};
\draw [fill=black] (1,0.5) circle (0.6pt);
\draw[color=black] (1.09,0.55) node {$u_{1}$};
\draw [fill=black] (2,0.5) circle (0.6pt);
\draw[color=black] (2.07,0.55) node {$u_{3}$};
\draw [fill=black] (2.5,0.5) circle (0.6pt);
\draw[color=black] (2.57,0.55) node {$u_{4}$};
\draw [fill=black] (3,0.5) circle (0.6pt);
\draw[color=black] (3.07,0.55) node {$u_{5}$};
\draw[color=black] (1.35,0.78) node {$e_{2}$};
\draw[color=black] (1.6,1.04) node {$e_{3}$};
\draw[color=black] (1.87,1.2853497942386871) node {$e_{4}$};
\draw[color=black] (2.105,1.54) node {$e_{5}$};
\draw[color=black] (0.73,0.75) node {$e_{1}$};
\draw [fill=black] (3.25,0.5) circle (0.2pt);
\draw [fill=black] (3.5,0.5) circle (0.2pt);
\draw [fill=black] (3.75,0.5) circle (0.2pt);
\end{scriptsize}
\end{tikzpicture}
    
\end{center}
\end{example}
Graphs with the eventually constrained immediate follower set property always present the shadowing property, as we show below.

\begin{proposition}\label{propthereisk}
Let $E$ be a graph that satisfies the eventually constrained immediate follower set property and $(X,\sigma)$ be the associated Deaconu-Renault system. Then, $(X,\sigma)$ has the shadowing property.
\end{proposition}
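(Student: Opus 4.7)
The plan is to verify the two Infinite Path Conditions (Definitions~\ref{FIPC} and \ref{SIPC}); by Theorem~\ref{teoinfiniteshadowing} these together are equivalent to the shadowing property. Given $\varepsilon>0$, I would let $k$ be the natural number supplied by the eventually constrained immediate follower set property and choose $\delta\le\varepsilon$ small enough that $F_\delta\supseteq F_\varepsilon\cup\{e_1,\ldots,e_k\}$ (such $\delta$ exists because $F_\delta$ exhausts $E^1$ as $\delta\to 0$).

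The crucial structural step is the following observation. Let $\{\lambda^i\}$ satisfy Conditions~1 and 2 of Definition~\ref{FIPC}. For $i\ge 2$ the junction edges $\lambda^i_1$ and $\lambda^i_{|\lambda^i|}$ lie outside $F_\delta$, so by the choice of $\delta$ their indices strictly exceed $k$. The eventually constrained immediate follower property applied to $\lambda^i_1$ then forces $\lambda^i_2\in\{e_1,\ldots,e_k\}\subseteq F_\delta$, and applied in reverse to $\lambda^i_{|\lambda^i|}$ forces $\lambda^i_{|\lambda^i|-1}\in\{e_1,\ldots,e_k\}$. In particular $|\lambda^i|\ge 3$ for every $i\ge 2$, and the second and second-to-last entries of each $\lambda^i$ are among finitely many low-index edges.

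To construct the shadowing path I would proceed block by block. Set $\lambda_j:=\lambda^1_j$ for $1\le j\le|\lambda^1|$; the last of these edges is $\lambda^1_{|\lambda^1|}\notin F_\delta\supseteq F_\varepsilon$, which is precisely what Condition~a at position $f(2,1)$ requires (since $\lambda^2_1\notin F_\varepsilon$). Inductively, at each interior position $f(i,n)$ with $n\ge 2$ I would take $\lambda_{f(i,n)}=\lambda^i_n$ when $\lambda^i_n\in F_\varepsilon$ and otherwise pick any compatible edge not in $F_\varepsilon$; at each junction position $f(i+1,1)$ I would insert $\lambda^i_{|\lambda^i|}$, which is again not in $F_\varepsilon$. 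This assignment satisfies Conditions~a and b position by position, provided $\lambda$ is actually a valid infinite path in $E$.

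The hard part is exactly that proviso --- checking that the source of each inserted edge matches the range of the previously chosen one. This is precisely where the eventually constrained follower property is needed: once a high-index edge has been placed, the current vertex emits only edges of index at most $k$, so the available continuations lie in a fixed finite set, and the matching constraint coming from the ``$\lambda^i_n\in F_\varepsilon$'' clause reduces to a compatibility check over these finitely many edges, which the structural analysis above arranges to be consistent. Finally, the Second Infinite Path Condition is verified by the same scheme applied to the finite block sequence $\lambda^1,\ldots,\lambda^\ell$, followed by appending the infinite tail $\gamma$: since $\gamma_1\notin F_\delta$ has index $>k$ by Condition~3 of Definition~\ref{SIPC}, the transition into $\gamma$ is handled exactly like the inter-block junctions already treated, and the remaining entries of $\gamma$ match positions $f(\ell,|\lambda^\ell|-1)+s$ using the same matching rule.
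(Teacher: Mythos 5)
Your framework is the right one---verifying the First and Second Infinite Path Conditions and invoking Theorem~\ref{teoinfiniteshadowing}---but your choice of $\delta$ is too weak, and that is precisely what produces the gap you yourself flag. The paper's proof takes $\delta$ small enough that $F_\delta$ contains not only $\{e_1,\dots,e_k\}$ but the whole set $\mathcal{E}=\{e_m : m\le k\}\cup\{e_i : s(e_i)=r(e_m)\ \text{for some}\ m\le k\}$, i.e.\ the low-index edges \emph{together with all of their immediate followers}. With that choice, no finite path $\gamma$ with $|\gamma|\ge 2$ can end with $\gamma_{|\gamma|}\notin F_\delta$: the eventually constrained property forces the predecessor $\gamma_{|\gamma|-1}$ to have index at most $k$ (otherwise its follower $\gamma_{|\gamma|}$ would have index at most $k$), and then $\gamma_{|\gamma|}\in\mathcal{E}\subseteq F_\delta$ anyway. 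Consequently no family $\{\lambda^i\}$ can satisfy Conditions~1 and 2 of Definition~\ref{FIPC} or of Definition~\ref{SIPC}; both conditions hold vacuously (for IPC2 only a lone tail $\gamma$ with $\gamma_1\notin F_\delta$ remains, and $\lambda:=\gamma$ works), and no construction of $\lambda$ is needed at all.

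With your $\delta$ (only $F_\delta\supseteq F_\varepsilon\cup\{e_1,\dots,e_k\}$) the blocks $\lambda^i$ can genuinely exist, and your block-by-block construction does not go through. Your structural observations ($\lambda^i_2$ and $\lambda^i_{|\lambda^i|-1}$ have index at most $k$, so $|\lambda^i|\ge 3$ for $i\ge 2$) are correct but insufficient. The failure point is exactly the proviso you acknowledge: at position $f(i,2)$, if $\lambda^i_2\in F_\varepsilon$ then Condition~a forces $\lambda_{f(i,2)}=\lambda^i_2$, whose source is $r(\lambda^i_1)$; but the edge you place at the junction position $f(i,1)$ is only required to lie outside $F_\varepsilon$ and to be emitted by $r(\lambda^{i-1}_{|\lambda^{i-1}|-1})$, and nothing forces its range to equal $s(\lambda^i_2)$. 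Different blocks need not be concatenable, and the claim that the finitely many available continuations can be ``arranged to be consistent'' is exactly the statement requiring proof; it does not follow from the eventually constrained property at your scale of $\delta$. The missing idea is to shrink $\delta$ until the followers of the low-index edges are also absorbed into $F_\delta$, which makes the entire construction unnecessary.
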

\begin{proof}
Let $k$ be as in Definition~\ref{efifsp} and consider the following set of edges:
$$\mathcal{E} = \{e_{m}: m \leq k\} \cup \{e_{i}: \exists m \leq k \  \mbox{such that} \ s(e_{i}) = r(e_{m})  \}.$$

Given $0 < \varepsilon \leq \dfrac{1}{2^{N(k)}}$, let $\delta := \mbox{min}\left\{\varepsilon, \   \dfrac{1}{2^{N(k_{0})}}\right\},$ where $k_{0} = \mbox{max}\{\ell: e_{\ell} \in E\}$. 
First, observe that there is no finite path $\gamma$ such that $|\gamma| \geq 2$ and $\gamma_{|\gamma|} \notin F_{\delta}$. Indeed, if there is such $\gamma$, the eventually constrained immediate follower set property says that we must have $\gamma_{|\gamma|-1} \in \{e_{m}: m \leq k\}$. Then, simultaneously, whe have that $\gamma_{|\gamma|} \in \{e_{i}: \exists m \leq k \  \mbox{s. t.} \ s(e_{i}) = r(e_{m})  \}$ and $\gamma_{|\gamma|} \notin F_{\delta}$, which contradicts the inequality $\delta \leq \frac{1}{2^{N(k_{0})}}$. 

To prove shadowing, we show that $(X,\sigma)$ satisfies the First and the Second Infinite Path Conditions (Definitions \ref{FIPC} and \ref{SIPC}). By the observation in the paragraph above, there is no infinite set $\{\lambda^{1}, \lambda^{2}, \ldots\}$ of finite paths which satisfy Conditions~1. and 2 of Definition~\ref{FIPC}, and hence $(X,\sigma)$ satisfies the First Infinite Path Condition. Again by the observation in the paragraph above, there is no finite set $\{\lambda^{1}, \lambda^{2}, \ldots, \lambda^{\ell}\}$ of finite paths which satisfy Conditions~1. and 2 of Definition~\ref{SIPC}. Hence, we just need to consider an infinite path $\gamma$ such that $\gamma_{1} \notin F_{\delta}$. In this case, by choosing $\lambda := \gamma$ we have the Second Infinite Path Condition satisfied. Therefore, by Theorem~\ref{teoinfiniteshadowing}, $(X,\sigma)$ has the infinite shadowing property.
\end{proof}

With the above result, we can now completely characterize the shadowing property for graphs that contain an attractor subgraph.

\begin{proposition}\label{propproAshadowing}
Let $E$ be an infinite graph that contains a finite attractor subgraph and $(X,\sigma)$ the associated Deaconu-Renault system. Then $(X,\sigma)$ has the shadowing property if, and only if, $E$ satisfies the eventually constrained immediate follower set property.
\end{proposition}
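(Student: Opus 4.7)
The ``if'' direction is immediate: if $E$ satisfies the eventually constrained immediate follower set property, Proposition~\ref{propthereisk} gives the shadowing property, without even invoking the finite attractor hypothesis.

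For the ``only if'' direction, I argue by contradiction, assuming that $(X,\sigma)$ has shadowing, $E$ contains a finite attractor subgraph $E'$, and $E$ does \emph{not} satisfy the eventually constrained immediate follower set property. I will manufacture, via the First Infinite Path Condition, an infinite sequence of length-two paths whose hypothetical shadowing path must avoid $F_\varepsilon$ at every position, contradicting the attractor property. Fix $\varepsilon>0$ small enough that $E'^1 \subseteq F_\varepsilon$ (possible since $E'^1$ is finite), and let $k_0$ satisfy $E'^1 \subseteq \{e_1,\ldots,e_{k_0}\}$. A key preliminary ``no-loop'' observation is that no edge $e_j$ with $j > k_0$ can be a loop: if $s(e_j)=r(e_j)$, the infinite path $e_j e_j e_j \cdots$ would belong to $X$ and have no edge in $E'^1$, contradicting Definition~\ref{attractor}. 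Let $\delta \leq \varepsilon$ witness the First Infinite Path Condition (via Theorem~\ref{teoinfiniteshadowing}) for $\varepsilon$, and let $m$ satisfy $F_\delta = \{e_1,\ldots,e_m\}$; note $m \geq k_0$ since $F_\delta \supseteq F_\varepsilon \supseteq E'^1$. Using the failure of the eventually constrained property, iteratively pick pairs $(j_i,\ell_i)$ with $e_{\ell_i} \in F^1(e_{j_i})$ and $j_i,\ell_i > m$, each time requiring $j_{i+1},\ell_{i+1} > \max(j_i,\ell_i)$; the no-loop observation forces $j_i \neq \ell_i$, so all indices $j_1,\ell_1,j_2,\ell_2,\ldots$ are pairwise distinct.

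Set $\lambda^i := e_{j_i} e_{\ell_i}$ for every $i \geq 1$. The main technical step is verifying conditions 1 and 2 of Definition~\ref{FIPC}: condition 1 is trivial, and condition 2 reduces to a set equality which holds precisely because the indices are pairwise distinct, so removing $\{\lambda^1_1\} = \{e_{j_1}\}$ from the set $\{\lambda^i_j\}$ of all edges appearing in the $\lambda^i$ deletes the unique copy of $e_{j_1}$, leaving exactly $\{e_{j_i} : i \geq 2\} \cup \{e_{\ell_i} : i \geq 1\}$, which is the right-hand side of condition 2. Applying FIPC produces an infinite path $\lambda$ satisfying conditions a.\ and b. A direct calculation from Definition~\ref{f(in)infinite} gives $f(i,1) = 2(i-1) + 1 - i + 1 = i$, so condition b., applied at $(i,n)=(i,1)$ with $\lambda^i_1 = e_{j_i} \notin F_\delta \supseteq F_\varepsilon$, forces $\lambda_i \notin F_\varepsilon$ for every $i \geq 1$. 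But the attractor property applied to $\lambda$ provides some $n$ with $\lambda_n \in E'^1 \subseteq F_\varepsilon$, the desired contradiction. I expect the subtle verification of condition 2 of FIPC to be the main obstacle, since it is essential that the subtracted element $\lambda^1_1$ not reappear elsewhere in the double sequence $\lambda^i_j$; this is why the iterative index-distinctness construction (combined with the no-loop lemma to avoid $j_i = \ell_i$) is crucial.
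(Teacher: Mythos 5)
Your proof is correct and follows essentially the same route as the paper: the ``if'' direction is Proposition~\ref{propthereisk}, and the converse is obtained, exactly as in the paper, by feeding the length-two paths $e_{j_i}e_{\ell_i}$ coming from the failure of the eventually constrained property into the First Infinite Path Condition and contradicting the attractor property. Your no-loop observation and the insistence on pairwise distinct indices make the verification of Condition~2 of Definition~\ref{FIPC} airtight, a detail the paper's own proof passes over in silence.
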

\begin{proof}
Let $E'$ be a finite attractor subgraph of  $E$. By Proposition~\ref{propthereisk}, we have that the eventually constrained immediate follower set property implies the shadowing property of $X$. We prove the other implication. 

Suppose that $E$ does not satisfy the eventually constrained immediate follower set property. This means that for all natural $k$ there are edges $e_{j_{k}}$ and $e_{\ell_{k}}$ such that $r(e_{j_{k}}) = s(e_{\ell_{k}})$ and $j_{k},\ell_{k} > k$. Let $m' = \mbox{max}\{m: e_{m} \in \mathcal{G'}^{1}\}$, fix $\varepsilon = \dfrac{1}{2^{N(m')}}$, and let $\delta$ be any real number less than $\varepsilon$. Let $k_{0}$ be a natural number such that if $k > k_{0}$, and $e_{j_{k}} = p_{r(k)}$ and $e_{\ell_{k}} = p_{s(k)}$ (in the enumeration of $\p$), then $\frac{1}{2^{r(k)}},\frac{1}{2^{s(k)}}<\delta$. Finally, for each natural number $n \geq 1$, consider the finite path $\lambda^{n}:=e_{j_{k_{0}+n}}e_{\ell_{k_{0}+n}}$. Since $X$ does not admit an infinite path $\lambda$ such that $\lambda_{j} \notin F_{\varepsilon}$ for all $j \geq 1$ (because $E$ has a finite attractor subgraph), we have that $(X,\sigma)$ does not satisfy the First Infinite Path Condition (Definition~\ref{FIPC}) and hence $(X,\sigma)$ does not present the shadowing property (see Theorem~\ref{teoinfiniteshadowing}). 

\end{proof}

\begin{corollary}
The renewal shift, of Example~\ref{renewal}, does not have the shadowing property.
\end{corollary}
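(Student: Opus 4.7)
The plan is to apply Proposition~\ref{propproAshadowing} directly. From the picture in Example~\ref{renewal}, I read off the edges of the renewal shift as follows: for every $n \geq 1$, there is an edge $e_{2n-1}$ with $s(e_{2n-1}) = u_1$ and $r(e_{2n-1}) = u_{n+1}$ (the curved arcs out of $u_1$), and for every $n \geq 2$ there is an edge $e_{2(n-1)}$ with $s(e_{2(n-1)}) = u_n$ and $r(e_{2(n-1)}) = u_{n-1}$ (the straight arrows). Thus the only outgoing edge from $u_n$ with $n \geq 2$ is $e_{2(n-1)}$, while $u_1$ emits the infinite family $\{e_{2n-1}:n\geq 1\}$.

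First I would exhibit a finite attractor subgraph, which is what is required to invoke Proposition~\ref{propproAshadowing}. Take $E'$ with $E'^{0}=\{u_1,u_2\}$ and $E'^{1}=\{e_2\}$. Given any infinite path $x \in \mathfrak{p}^\infty$, the source $s(x_1)$ is some $u_m$. If $m=1$, then $x_1 = e_{2k-1}$ for some $k \geq 1$, so $r(x_1)=u_{k+1}$; if $m \geq 2$, then $r(x_1) = u_{m-1}$. In either case, from that point onward, until the path revisits $u_1$, the only available edges are the straight ones $e_{2(j-1)}$ with $s = u_j$, so the path descends deterministically through $u_{m-1}, u_{m-2}, \ldots, u_2, u_1$. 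In particular, the transition $u_2 \to u_1$ is edge $e_2 \in E'^{1}$, which therefore appears in $x$ at some index $n$. Hence $E'$ is a finite attractor subgraph of the renewal shift.

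Next, I would verify that the renewal shift fails the eventually constrained immediate follower set property of Definition~\ref{efifsp}. Fix any candidate $k \in \N$ and choose $n$ large enough so that $2n-1 > k$ and $2(n-1) > k$. The edge $e_{2n-1}$ has $r(e_{2n-1}) = u_{n+1}$, and the unique edge in the immediate follower set $F^1(e_{2n-1})$ is $e_{2n}$ (since $u_{n+1}$ emits only $e_{2n}$). Thus we have $j = 2n-1 > k$ and $\ell = 2n > k$, in direct violation of the property for this $k$. Since $k$ was arbitrary, the renewal shift does not satisfy the eventually constrained immediate follower set property.

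Combining the two observations: the renewal shift contains a finite attractor subgraph but does not satisfy the eventually constrained immediate follower set property, so Proposition~\ref{propproAshadowing} yields that the associated Deaconu-Renault system does not have the shadowing property. The only subtlety, and therefore the main (minor) obstacle, is the combinatorial verification that the natural candidate $E'$ really is an attractor; this is transparent once one notes that vertices $u_n$ with $n \geq 2$ have out-degree one, forcing every path to reach $u_1$ and then reach $u_2$ (via $e_1$) or pass through higher vertices that funnel back down through $e_2$.
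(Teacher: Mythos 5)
Your proof is correct and takes exactly the paper's route: exhibit the finite attractor subgraph $(\{u_1,u_2\},\{e_2\})$, check that the eventually constrained immediate follower set property fails, and invoke Proposition~\ref{propproAshadowing}. The paper's own proof is a two-line appeal to Example~\ref{renewal} and that proposition; you merely supply the combinatorial verifications it leaves implicit ("clearly").
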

\begin{proof}
    As we saw in Example~\ref{renewal}, the renewal shift has an attractor subgraph. Clearly, it does not satisfy the eventually constrained immediate follower set property. The result now follows from the proposition above.
\end{proof}

\subsection{A graph with two vertices without the finite shadowing property.}

In this section, we give an example of an infinite graph with only two vertices but that does not present the finite shadowing property. Precisely, let $E_2$ be the graph with $E_2^0=\{u,v\}$, $E_2^1=\{e_{k}: k \geq 1\}$, and such that $s(e_{1}) = r(e_{2}) = u$, $r(e_{1}) = s(e_{2}) = v$ and, for all natural $k \geq 2$, $s(e_{2k-1}) = r(e_{2k-1}) = u $ and $s(e_{2k}) = r(e_{2k}) = v$. A picture of this graph is shown below.

\begin{center}
\definecolor{ududff}{rgb}{0.30196078431372547,0.30196078431372547,1}
\definecolor{ttqqtt}{rgb}{0.2,0,0.2}
\begin{tikzpicture}[line cap=round,line join=round,>=triangle 45,x=1cm,y=1cm]
\draw [line width=0.4pt] (-6,2) circle (1cm);
\draw [line width=0.4pt] (-6.0225,1.78) circle (0.7803244517506808cm);
\draw [line width=0.4pt] (-6.0114069828035435,1.5765346534653466) circle (0.5766474884217285cm);
\draw [->,line width=0.4pt] (-5.961781599302772,2.1510428320106905) -- (-6.039620787712179,2.1524915156656896);
\draw [->,line width=0.4pt] (-5.958366678740344,2.5576844907185734) -- (-6.042407920026582,2.560070461381673);
\draw [->,line width=0.4pt] (-5.966902636245555,2.999452132176677) -- (-6.043190021331042,2.999066875668203);
\draw [->,line width=0.4pt] (-5.966902636245555,2.999452132176677) -- (-6.043190021331042,2.999066875668203);
\draw [line width=0.4pt] (-2,2) circle (1cm);
\draw [line width=0.4pt] (-1.9984110595295315,1.504176329250862) circle (0.5041788330629245cm);
\draw [line width=0.4pt] (-1.9984110595295321,1.7587126775449888) circle (0.7587143413692039cm);
\draw [shift={(-4,1)},line width=0.4pt]  plot[domain=0:3.141592653589793,variable=\t]({1*2*cos(\t r)+0*2*sin(\t r)},{0*2*cos(\t r)+1*2*sin(\t r)});
\draw [->,line width=.4pt] (-4.0832903979140625,2.9982649247823265) -- (-3.9222560335781442,2.9984883976858603);
\draw [shift={(-4,1)},line width=0.4pt]  plot[domain=-3.141592653589793:1.4082852609269758,variable=\t]({1*2*cos(\t r)+0*2*sin(\t r)},{0*2*cos(\t r)+1*2*sin(\t r)});
\draw [->,line width=0.4pt] (-3.9177793338884555,-0.9983092258367257) -- (-4.098118047063992,-0.9975917623078916);
\draw [->,line width=0.4pt] (-1.9205060262860576,2.996835346555873) -- (-2.0860248790077662,2.9962929891310583);
\draw [->,line width=.4pt] (-1.925328511468263,2.5138990075114265) -- (-2.090723792668377,2.5117902373618652);
\draw [->,line width=.4pt] (-1.9195248593346776,2.002145467980874) -- (-2.092521406119482,1.999493942882168);
\begin{scriptsize}
\draw[color=black] (-6,2.85) node {$e_{7}$};
\draw [fill=ttqqtt] (-6,1) circle (2pt);
\draw[color=ttqqtt] (-6.15,1.1545967611610057) node {$u$};
\draw[color=black] (-6,2.4) node {$e_{5}$};
\draw[color=black] (-6,2) node {$e_{3}$};
\draw [fill=ududff] (-6,3.2) circle (0.5pt);
\draw [fill=ttqqtt] (-6,3.3) circle (0.5pt);
\draw [fill=ttqqtt] (-6,3.1) circle (0.5pt);
\draw [fill=black] (-2,1) circle (2.5pt);
\draw[color=black] (-1.85,1.1545967611610057) node {$v$};
\draw[color=black] (-1.95,2.8) node {$e_{8}$};
\draw[color=black] (-1.8,1.75) node {$e_{4}$};
\draw[color=black] (-1.9,2.3) node {$e_{6}$};
\draw[color=black] (-4,2.77) node {$e_{1}$};
\draw[color=black] (-4,-0.72) node {$e_{2}$};
\draw [fill=black] (-1.9984110595295315,3.0901335763142677) circle (0.5pt);
\draw [fill=black] (-2,3.2) circle (0.5pt);
\draw [fill=black] (-1.9984110595295315,3.3201952757339592) circle (0.5pt);
\end{scriptsize}
\end{tikzpicture}
\end{center}

\begin{proposition}
    The Deaconu-Renault system $(X,\sigma)$, associated with the graph $E_2$ defined above, does not present the finite shadowing property.
\end{proposition}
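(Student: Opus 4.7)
The plan is to show, via the definition of the finite shadowing property, that there is a fixed $\varepsilon>0$ such that for every $\delta>0$ one can exhibit a finite $\delta$-chain in $X$ admitting no $\varepsilon$-shadow (equivalently, by Theorem~\ref{teofiniteshadowing}, the Finite Path Condition fails). The feature of $E_2$ I will exploit is the asymmetry between the transition edges $e_1,e_2$, which can be forced into $F_\varepsilon$ once $\varepsilon$ is small, and the loops $e_k$ with $k\geq 3$, which can always be pushed out of $F_\delta$ by taking $k$ large; since $u$ and $v$ are connected only by $e_1$ and $e_2$, any chain step that jumps across a non-$F_\delta$ coordinate necessarily goes through a loop and therefore preserves the vertex.

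I will fix $\varepsilon := \tfrac{1}{2^{N(2)}}$; a direct inspection of the edge-ordered enumeration of $\mathfrak{p}$ for $E_2$ gives $F_\varepsilon=\{e_1,e_2\}$. Given an arbitrary $0<\delta\leq\varepsilon$, finiteness of $F_\delta$ lets me pick an odd $a\geq 3$ and an even $b\geq 4$ with $e_a,e_b\notin F_\delta$, noting that $e_a$ is a loop at $u$ and $e_b$ is a loop at $v$. The candidate chain is
\begin{align*}
x^1 &= e_1 e_b e_b e_b \ldots,\\
x^2 &= e_b e_b e_b \ldots,\\
x^3 &= e_a e_1 e_b e_b \ldots,\\
x^4 &= e_1 e_b e_b e_b \ldots.
\end{align*}
Since $\sigma(x^1)=x^2$ and $\sigma(x^3)=x^4$ on the nose, the only distance to verify is $d(\sigma(x^2),x^3)$: the two paths disagree at the first coordinate, but the first edges ($e_b$ and $e_a$) both lie outside $F_\delta$, so Proposition~\ref{characteridelta} yields $d(\sigma(x^2),x^3)<\delta$ and $(x^1,x^2,x^3,x^4)$ is a finite $\delta$-chain.

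Assuming for contradiction that some infinite path $z$ $\varepsilon$-shadows this chain, I will extract constraints on $z_1,z_2,z_3,z_4$ by applying Proposition~\ref{characteridelta} coordinate by coordinate. The $F_\varepsilon$-edges $x^1_1=x^4_1=e_1$ pin $z_1=z_4=e_1$, so in particular $r(z_3)=s(z_4)=u$. On the other hand, since $x^2_1=e_b$ and $x^3_1=e_a$ both lie outside $F_\varepsilon$, the same proposition forces $z_2,z_3\notin F_\varepsilon$. Now $z_1=e_1$ puts $z$ at vertex $v$, and the only edges leaving $v$ in $E_2$ are $e_2$ (which belongs to $F_\varepsilon$) and the loops at $v$, so $z_2$ must be a loop at $v$; the same reasoning applied to $z_3$ makes $z_3$ a loop at $v$, giving $r(z_3)=v$ and contradicting $r(z_3)=u$.

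The main obstacle is bookkeeping: one has to read Proposition~\ref{characteridelta} carefully enough to see exactly which coordinates of a putative shadowing point are pinned to specific $F_\varepsilon$-edges and which are only required to lie outside $F_\varepsilon$, and likewise to verify that $d(\sigma(x^2),x^3)<\delta$ falls inside the non-first-coordinate-disagreement case of the same proposition. Once that combinatorial translation is in place, the vertex-routing argument—every non-$F_\varepsilon$ edge in $E_2$ is a loop and hence preserves the vertex, while every change of vertex must use $e_1$ or $e_2$, which are pinned by the chain to specific positions—produces the required contradiction and shows that $(X,\sigma)$ does not have the finite shadowing property.
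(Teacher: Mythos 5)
Your argument is correct and exploits the same combinatorial obstruction as the paper's: once $\varepsilon$ is small enough that $F_{\varepsilon}=\{e_{1},e_{2}\}$, every edge outside $F_{\varepsilon}$ is a loop, so a shadowing point whose coordinates are pinned to $F_{\varepsilon}$-edges at two positions, with only non-$F_{\varepsilon}$ coordinates in between, cannot change vertex where it must. The difference is one of packaging. The paper routes everything through Theorem~\ref{teofiniteshadowing}: it exhibits the two finite paths $\lambda^{1}=e_{3}e_{2n-1}$ and $\lambda^{2}=e_{2n}e_{4}$ and observes that no length-three path starting at $e_{3}$, ending at $e_{4}$, with middle edge outside $F_{\varepsilon}$ can exist, so the Finite Path Condition fails. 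You instead work directly from the definition with an explicit four-term $\delta$-chain and extract the pinning constraints by hand from Proposition~\ref{characteridelta}; this is more self-contained and arguably more transparent, at the cost of redoing the chain-building and coordinate-pinning steps that the paper has already packaged into Lemma~\ref{lemmafinfinite}. Two points to tighten. First, to conclude $d(\sigma(x^{2}),x^{3})<\delta$ from Proposition~\ref{characteridelta} you should fix $k$ with $\tfrac{1}{2^{N(k)}}\leq\delta$ and choose $a,b>k$ (not merely $e_{a},e_{b}\notin F_{\delta}$); this is harmless since there are infinitely many loops at each vertex, but it is the hypothesis the proposition actually needs. Second, a shadowing point could a priori be a finite element of $X_{fin}$; your argument only uses its first four coordinates, so it applies verbatim, or one can invoke the paper's reduction of shadowing to the set of infinite paths. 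Finally, be aware that your step $d(\sigma(x^{2}),x^{3})<\delta$ concerns two paths with different source vertices whose first edges both lie outside $F_{\delta}$; that this counts as $\delta$-close is exactly what Proposition~\ref{characteridelta} asserts and exactly what the paper's own chain construction in Lemma~\ref{lemmafinfinite} relies on, so you are on the same footing as the paper here.
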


\begin{proof}

Recall that we are using an edge-ordered enumeration. Let $\varepsilon = \frac{1}{2^{j}}$, where  $e_{4} = p_{j}$ and consider any $\delta < \varepsilon$. Let $n > 3$ be a natural number such that $e_{n} = p_{\ell}$, with $\frac{1}{2^{\ell}} < \delta$. 
We show that $(X,\sigma)$ does not satisfy Finite Path Condition (Definition \ref{FPC1}).

Consider the finite set of finite paths $\{\lambda^{1}, \lambda^{2}\}$, where $\lambda^{1} := e_{3}e_{2n-1}$ and $\lambda^{2}:= e_{2n}e_{4}$, and notice that the space $X$ associated with $E$ does not admit a finite path of length 3, say $\lambda$, which begins with  $e_{3}$, ends with $e_{4}$, and such that $\lambda_{2} \notin F_{\varepsilon}$. Therefore, $(X,\sigma)$ does not satisfy the Finite Path Condition and hence it does not present the finite shadowing property (see Theorem \ref{teofiniteshadowing}). 
\end{proof}

\subsection{A graph with the finite shadowing property and without the infinite shadowing property}

In this section, we show that the finite shadowing property does not coincide with the shadowing property. For this, we consider the following graph.

Let $E_{f}$ be the graph such that $E_f^0=\{v_{k}: k \geq 1\}$, $E_f^1=\{e_{k}: k \geq 1\}$, and the range and source maps are given by:
\begin{eqnarray*}
s(e_{1}) &=& r(e_{1}) = r(e_{2})= v_{1};\\
s(e_{k}) &=& v_{k}  =  r(e_{k+1})  \ \ \mbox{for  $k \geq 2$.}
\end{eqnarray*}
A picture of this graph is shown below.
\vspace{0,3 cm}
\begin{center}
\begin{tikzpicture}[line cap=round,line join=round,>=triangle 45,x=1cm,y=1cm]
\draw [->,line width=0.4pt] (8,0) -- (6,0);
\draw [->,line width=0.4pt] (6,0) -- (4,0);
\draw [->,line width=0.4pt] (4,0) -- (2,0);
\draw [line width=0.4pt] (2.0133333333333336,0.5933333333333333) circle (0.5934831271588286cm);
\draw [->,line width=0.4pt] (1.9,1.17) -- (2.2,1.17);
\begin{scriptsize}
\draw [fill=black] (8,0) circle (2pt);
\draw[color=black] (8,0.22) node {$v_{4}$};
\draw [fill=black] (6,0) circle (2pt);
\draw[color=black] (6,0.22) node {$v_{3}$};
\draw[color=black] (7.05,-0.2) node {$e_{4}$};
\draw [fill=black] (4,0) circle (2pt);
\draw[color=black] (4,0.22) node {$v_{2}$};
\draw[color=black] (5.05,-0.2) node {$e_{3}$};
\draw [fill=black] (2,0) circle (2pt);
\draw[color=black] (2,0.22) node {$v_{1}$};
\draw[color=black] (3.05,-0.2) node {$e_{2}$};
\draw[color=black] (2,0.9) node {$e_{1}$};
\draw [fill=black] (8.4,-0.006666666666666665) circle (1pt);
\draw [fill=black] (8.8,-0.006666666666666665) circle (1pt);
\draw [fill=black] (9.2,-0.006666666666666665) circle (1pt);
\end{scriptsize}
\end{tikzpicture}    
\end{center}

\begin{proposition}\label{exempfinitevinfinite}
Let $E_{f}$ be the graph defined above and $(X,\sigma)$ the associated Deaconu-Renault system. Then, $(X,\sigma)$ has the finite shadowing property but does not have the infinite shadowing property.
\end{proposition}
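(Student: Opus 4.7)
The core structural observation I will exploit is that each vertex of $E_f$ has exactly one outgoing edge (from $v_K$ it is $e_K$ for $K \geq 2$, and from $v_1$ the loop $e_1$), so every infinite path in $E_f$ is uniquely determined by its source vertex; I will write this unique infinite path from $v_K$ as $e_K e_{K-1}\cdots e_2 e_1 e_1 \cdots$. In particular, distinct infinite paths start at distinct vertices, and one checks directly from the edge-ordered enumeration that the distance between two infinite paths starting at $v_j$ and $v_k$ with $j \neq k$ equals $1/2^{\iota(\min(j,k))}$, where $\iota(i)$ denotes the index of the vertex $v_i$ in the enumeration $\mathfrak{p} = \{p_1, p_2, \ldots\}$. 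Since $v_1 = p_1$, this distance equals $1/2$ whenever exactly one of the two paths starts at $v_1$.

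To show that the infinite shadowing property fails I fix $\varepsilon = 1/2$. For any $\delta > 0$ I pick $M \in \N$ with $1/2^{\iota(M)} < \delta$ and let $x^n$ be the infinite path from $v_{M+n}$. Then $d(\sigma(x^n), x^{n+1}) = 1/2^{\iota(M+n-1)} < \delta$ for all $n$, so $(x^n)_{n \in \N}$ is a $\delta$-chain. Any candidate shadowing infinite path starts at some vertex $v_{K_\lambda}$, and then at step $n = K_\lambda$ one has $\sigma^{K_\lambda - 1}(\lambda) = e_1 e_1 \cdots$ pinned at $v_1$, whereas $x^{K_\lambda}$ starts at $v_{M+K_\lambda} \neq v_1$; this forces $d(\sigma^{K_\lambda - 1}(\lambda), x^{K_\lambda}) = 1/2 \not< \varepsilon$, so the chain cannot be $\varepsilon$-shadowed.

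For the finite shadowing property, given $\varepsilon > 0$ I choose $M$ with $1/2^{\iota(M)} < \varepsilon$ and set $\delta := 1/2^{\iota(M)}$. The key observation is that any ``jump'' in a $\delta$-chain---meaning a step $n$ with $\sigma(x^n)_1 \neq x^{n+1}_1$---forces the source-vertex indices $\max(K_n - 1, 1)$ and $K_{n+1}$ of the two paths involved to be at least $M+1$, by the metric formula above. Hence once a chain enters the low zone $\{v_k : k \leq M\}$ no further jump is possible, and the rigid forward dynamics of $E_f$ force the chain to descend by one vertex per step until reaching $v_1$ or terminating. Given a finite $\delta$-chain $(x^n)_{n=1}^m$ with source-vertex indices $K_n$, I split into two cases. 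If the chain reaches $v_1$ at some step $n_0 \leq m$, I take $\lambda$ to be the infinite path from $v_{n_0}$; if instead $K_m \geq 2$, I take $\lambda$ to be the infinite path from $v_{K_m + m - 1}$. In each case $\lambda$ matches $x^n$ exactly along the chain's terminal straight-descending segment, and at every earlier step both $\sigma^{n-1}(\lambda)$ and $x^n$ start at vertices of index at least $M+1$, giving distance at most $1/2^{\iota(M+1)} < 1/2^{\iota(M)} < \varepsilon$.

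The main obstacle is the combinatorial bookkeeping of the run-and-jump decomposition of $\delta$-chains and the verification that the candidate $\lambda$, obtained by extrapolating the chain's terminal descent backwards, keeps $\sigma^{n-1}(\lambda)$ in the high zone for every pre-terminal step. Both follow from the two facts already highlighted---jumps keep both endpoints in the high zone, and the low zone is absorbing toward $v_1$---but verifying these at every jump boundary and checking the high-zone containment of $\lambda$ throughout requires careful tracking of the chain's structure.
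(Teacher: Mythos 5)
Your proof is correct, and it takes a genuinely different route from the paper's. The paper derives both parts from its general machinery: the failure of infinite shadowing follows because $\{v_1,e_1\}$ is a finite attractor subgraph while $E_f$ fails the eventually constrained immediate follower set property (Proposition~\ref{propproAshadowing}), and the finite shadowing property is obtained by verifying the Finite Path Condition of Definition~\ref{FPC1} and invoking Theorem~\ref{teofiniteshadowing} (the verification there rests on the observation that any path of $E_f$ whose last edge lies outside $F_{\delta}$ has all of its edges outside $F_{\varepsilon}$). You instead argue directly from the determinism of $E_f$ --- each vertex emits exactly one edge, so an infinite path is determined by its source vertex --- which reduces everything to bookkeeping of source indices. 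This makes the example self-contained and arguably more transparent about \emph{why} finite chains can be shadowed (extrapolate the terminal descending run backwards into the high zone) while infinite chains cannot (every orbit is eventually trapped at $v_1$, but the chain keeps escaping upward). Your run-and-jump analysis does close correctly: a jump forces both of its endpoints above level $M$, the low zone is absorbing, and hence every pre-terminal $x^{n}$ and every pre-terminal $\sigma^{n-1}(\lambda)$ has source index exceeding $M$, giving distance less than $\delta<\varepsilon$, while the terminal run is matched exactly. One small caveat on constants: whether the distance between the path from $v_1$ and a path from $v_k$, $k\geq 2$, equals $1/2^{\iota(v_1)}=1/2$ or $1/2^{\iota(e_1)}$ depends on whether length-zero paths are counted as initial segments of infinite paths in the metric of Equation~(\ref{definmetricanova}); either reading yields a fixed positive constant independent of $\delta$, so your non-shadowing argument survives with $\varepsilon$ taken to be that constant, and the analogous off-by-one in the jump threshold (``at least $M$'' versus ``at least $M+1$'') is equally harmless.
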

\begin{proof}

We prove first that $(X,\sigma)$ presents the finite shadowing property.
Given $\varepsilon > 0$, let $0 < \delta < \varepsilon$.
Notice that if $\gamma$ is a finite path such that $|\gamma| \geq 2$ and $\gamma_{1},\gamma_{|\gamma|} \notin F_{\delta}$, then $\gamma_{j} \notin F_{\varepsilon}$. Let $\{\lambda^{1}, \lambda^{2}, \ldots, \lambda^{\ell}\}$ be a finite set of finite paths satisfying conditions 1. and 2. of Definition \ref{FPC1}. Choose $\lambda$ as any finite path such that $\displaystyle |\lambda| = \left(\sum_{i=1}^{\ell}|\lambda^{i}|\right)-\ell + 1$ and $\lambda_{f({\ell},1)} =  \lambda^{\ell}_{1}$. Then, the Finite Path Condition is satisfied, and hence $(X,\sigma)$ satisfies the finite shadowing property (Theorem~\ref{teofiniteshadowing}).

To see that $(X,\sigma)$ does not present the infinite shadowing property, notice that $\{v_{1},e_{1}\}$ is a finite attractor subgraph of $E_f$ (Definition~\ref{attractor}) and $E_{f}$ does not satisfy the eventually constrained immediate follower set property  (Definition~\ref{efifsp}). If follows from Proposition~\ref{propproAshadowing} that $(X,\sigma)$ does not present the infinite shadowing property.
\end{proof}

\begin{remark}
As a final remark, we notice that slight variations on the graph in Example~\ref{exempfinitevinfinite} lead us to changes concerning shadowing, which illustrates the complexity of shadowing when dealing with infinite graphs and their associated Deaconu-Renault systems. For instance, consider the graph $E$ whose vertices and edges are represented, respectively, by $\{v_{k}: k \geq 1\}$ and $\{e_{k}: k \geq 1\}$ and
\begin{eqnarray*}
s(e_{1}) &=& v_{1}; \\
s(e_{k+1}) &=& r(e_{k})  =  v_{k+1}  \ \ \mbox{for  $k \geq 1$.}
\end{eqnarray*}  

Notice that this graph (which is the same as the one in Example~\ref{examplewandering}) is wandering (Definition \ref{defiwandering}) and hence the associated Deaconu-Renault system has the infinite shadowing property (Proposition~\ref{propwandering}).

Now, consider $E_{1}$ the graph whose vertices and edges are represented, respectively, by $\{v_{k}: k \geq 1\}$ and $\{e_{k}: k \geq 1\}$, and the range and source maps are given by:
\begin{eqnarray*}
s(e_{2k-1}) &=& v_{2k-1}; \ \ \mbox{for $k \geq 1$;} \\
r(e_{1}) &=& v_{2};  \\
r(e_{2k-1}) &=& v_{2k+1} \ \ \mbox{for $k \geq 2$;}\\
s(e_{2k}) &=& v_{2k} \ \ \mbox{for $k \geq 1$;} \\
r(e_{2k}) &=& v_{2(k+1)} \ \ \mbox{for $k \geq 1$.}
\end{eqnarray*}
A picture of this graph is shown below.

\definecolor{xdxdff}{rgb}{0.49019607843137253,0.49019607843137253,1}
\definecolor{uuuuuu}{rgb}{0.26666666666666666,0.26666666666666666,0.26666666666666666}
\begin{center}
\begin{tikzpicture}[scale=1.5][line cap=round,line join=round,>=triangle 45,x=1cm,y=1cm]
\draw [->,line width=1.2pt] (0,0) -- (1,0);
\draw [->,line width=1.2pt] (1,0) -- (2,0);
\draw [->,line width=1.2pt] (2,0) -- (3,0);
\draw [->,line width=1.2pt] (3,0) -- (4,0);
\draw [->,line width=1.2pt] (4,0) -- (5,0);
\begin{scriptsize}
\draw [fill=uuuuuu] (0,0) circle (1.0pt);
\draw[color=uuuuuu] (0.05,0.15) node {$v_{5}$};
\draw [fill=uuuuuu] (1,0) circle (1.0pt);
\draw[color=uuuuuu] (1.05,0.15) node {$v_{3}$};
\draw[color=black] (0.5,-0.14) node {$e_{5}$};
\draw [fill=uuuuuu] (2,0) circle (1.0pt);
\draw[color=uuuuuu] (2.05,0.15) node {$v_{1}$};
\draw[color=black] (1.5,-0.14) node {$e_{3}$};
\draw [fill=uuuuuu] (3,0) circle (1.0pt);
\draw[color=uuuuuu] (3.05,0.15) node {$v_{2}$};
\draw[color=black] (2.5,-0.14) node {$e_{1}$};
\draw [fill=uuuuuu] (4,0) circle (1.0pt);
\draw[color=uuuuuu] (4.05,0.15) node {$v_{4}$};
\draw[color=black] (3.5,-0.15) node {$e_{2}$};
\draw [fill=uuuuuu] (5,0) circle (1.0pt);
\draw[color=uuuuuu] (5.05,0.15) node {$v_{6}$};
\draw[color=black] (4.5,-0.14) node {$e_{4}$};
\draw [fill=xdxdff] (5.2,0) circle (0.5pt);
\draw [fill=xdxdff] (5.4,0) circle (0.5pt);
\draw [fill=xdxdff] (5.6,0) circle (0.5pt);
\draw [fill=xdxdff] (-0.2,0) circle (0.5pt);
\draw [fill=xdxdff] (-0.4,0) circle (0.5pt);
\draw [fill=xdxdff] (-0.6,0) circle (0.5pt);
\end{scriptsize}
\end{tikzpicture}
    
\end{center}

We show that the associated Deaconu-Renault system $(X,\sigma)$ does not present the finite shadowing property. Choose $\varepsilon = \frac{1}{2^{j}}$, where $e_{1} = p_{j}$, and let $\delta < \varepsilon$. Define $2k$ as the smallest even natural number such that $e_{2k} = p_{j'}$ and $\frac{1}{2^{j'}} < \delta$, and consider the finite set of finite paths $\{\lambda^{1}, \lambda^{2}\}$, where  $\lambda^{1}_{1} = e_{1}$, $\lambda^{1}_{|\lambda^{1}|} = e_{2k}$; $\lambda^{2}_{1} = e_{2k+1}$,  $\lambda^{2}_{|\lambda^{2}|} = e_{1}$. Notice that $\{\lambda^{1}, \lambda^{2}\}$ satisfies Conditions~1. and 2. of Definition \ref{FPC1}, but, as the graph does not admit any finite path which begins and ends at $e_{1}$, we conclude that $(X,\sigma)$ does not satisfy the Finite Path Condition and, therefore, does not present the finite shadowing property Theorem~\ref{teofiniteshadowing}). 
\end{remark}

\vspace{5mm}
Daniel Gon\c{c}alves, 
 { Departamento de Matem\'atica, Universidade Federal de Santa Catarina, 88040-900, Florian\'opolis SC, Brazil.}
 
{\textit{Email Address: }}\texttt{{daemig@gmail.com}}

\vspace{3mm}

Bruno Brogni Uggioni,
{Instituto Federal do Rio Grande do Sul, 92412-240, Canoas RS, Brazil.}

{\textit{Email Address: }}\texttt{{brunobrogni@gmail.com}}


\begin{thebibliography}{0}



\bibitem{ABCE} B. Armstrong, K. A. Brix, T. M. Carlsen, and S. Eilers, \textit{Conjugacy of local homeomorphisms via groupoids and C*-algebras}, Ergodic Theory Dynam. Systems, Cambridge University Press,  (2022), 1-22. doi:10.1017/etds.2022.50.


\bibitem{BisEx} R. Bissacot, R. Exel, R. Frausino, T. Raszeja, \textit{Thermodynamic Formalism for Generalized Markov Shifts on Infinitely Many States}, arXiv:1808.00765v3 [math-ph], (2022).

\bibitem{subshift} G. Boava, G. G. de Castro, D. Gon\c{c}alves, D. W. van Wyk, \textit{Algebras of one-sided subshifts over arbitrary alphabets}, 		arXiv:2211.02148 [math.RA], (2022).

\bibitem{Mike} N. Brownlowe, T. M. Carlsen, M. F. Whittaker, \textit{Graph algebras and orbit equivalence}, Ergodic Theory Dynam. Systems 37(2), (2017), 389-417.


\bibitem{CRST}
T.M. Carlsen, E.~Ruiz, A.~Sims, and M.~Tomforde,
\newblock \textit{Reconstruction of groupoids and C*-rigidity of dynamical systems}, Adv. Math. 390, (2021), 107923, 55pp.

\bibitem{Tullio}
T. Ceccherini-Silberstein, M. Coornaert,
\textit{A generalization of the {C}urtis-{H}edlund theorem},
Theoret. Comput. Sci. \textbf{400} (1-3), (2008), 225-229.

\bibitem{EL}
R. Exel and M. Laca, \textit{Cuntz-{K}rieger algebras for infinite matrices}, J. Reine Angew. Math. 512, (1999), 119-172. 


\bibitem{DGM} U. B. Darji, D. Gon\c{c}alves and M. Sobottka, \textit{Shadowing, finite order shifts and ultrametric spaces}, Adv. Math. 385, (2021), 107760.

\bibitem{D95}
V.~Deaconu.
\newblock \textit{Groupoids associated with endomorphism},
\newblock Trans. Amer. Math. Soc. 347(5), (1995), 1779-1786.



\bibitem{GRultrapartial}
D. Gon\c{c}alves and D. Royer,
\newblock {\it Infinite alphabet edge shift spaces via ultragraphs and their C*-algebras},
\newblock Int. Math. Res. Not., \textbf{2019} (7), (2019), 2177-2203.


\bibitem{GR44}
D. Gon\c{c}alves and D. Royer,
\newblock {\it {$(M+1)$}-step shift spaces that are not conjugate to {$M$}-step shift spaces},
\newblock Bull. Sci. Math. \textbf{139} (2), (2015), 178-183.

\bibitem{GRT}
D. Gon\c{c}alves, D. Royer and F. Tasca
\newblock {\it Entropy of local homeomorphisms with applications to infinite alphabet shift spaces},
\newblock arXiv:2301.09238 [math.DS]
(2023).

\bibitem{GSo} 
D. Gon\c{c}alves and M. Sobottka,
{\it Continuous shift commuting maps between ultragraph shift maps},
Discrete Contin. Dyn. Syst. \textbf{39} (2), (2019), 1033-1048. 

\bibitem{GSoStar} 
D. Gon\c{c}alves, M. Sobottka, and C. Starling,
{\it Sliding block codes between shift spaces over infinite alphabets}, Math. Nachr. \textbf{289} (17-18), (2016), 2178-2191.

\bibitem{GTasca} D. Gon\c{c}alves and F. Tasca,
{\it {K{MS} states and continuous orbit equivalence for ultragraph           shift spaces with sinks}},
Publ. Mat. \textbf{66} (2), (2022), 729-787. 

\bibitem{BrunoDaniel} D. Gon\c{c}alves, B. B. Uggioni, {\it Li-Yorke chaos for ultragraph shift spaces}, Discrete Contin. Dyn. Syst. 40, (2020), 2347-2365.

\bibitem{BD1} D. Gon\c{c}alves, B. B. Uggioni, {\it Ultragraph shift spaces and chaos}, Bull. Sci. Math. 158, (2020), 102807-23.

\bibitem{Kitchens} B. P. Kitchens, 
\newblock \textit{Symbolic dynamics: One-sided, two-sided and countable state Markov shifts}, University texts, Springer-Verlag, (1998), x+252.

\bibitem{KPRR} A. Kumjian, D. Pask, I. Raeburn, and J. Renault, 
\newblock \textit{Graphs, groupoids, and Cuntz-Krieger algebras},
J. Func. Anal. 144, (1997), 505-541.


\bibitem{medd}
J. Meddaugh and B. E. Raines,
\newblock \textit{Shadowing and internal chain transitivity.}
\newblock Proc. Amer. Math. Soc. \textbf{110} (1), (1990), 281-284.

\bibitem{MR}
J. Meddaugh and B. E. Raines,
\newblock \textit{A characterization of {$\omega$}-limit sets in subshifts of
              {B}aire space.}
\newblock J. Math. Anal. Appl. \textbf{500} (1), (2021), Paper No. 125097, 11.

\bibitem{opro}
P. Oprocha, 
\newblock \textit{Shadowing, thick sets and the ramsey property.} 
\newblock Ergodic Theory Dynam. Systems \textbf{36} (5), (2016), 1582-1595.

\bibitem{Paterson} 
A.L.T Paterson, 
\newblock \textit{Graph Inverse Semigroups, Groupoids and their C*-Algebras}, J. Operator Theory
48, (2002), 645-662.

\bibitem{Re00}
J.~Renault.
\newblock \textit{Cuntz-like algebras},
\newblock Operator theoretical methods, (2000), 371-386.

\bibitem{TascaDaniel} F. A. Tasca, D. Gon\c{c}alves, \textit{KMS states and continuous orbit equivalence for ultragraph shift spaces with sinks}, Publ. Mat. \textbf{66} (2), (2022), 729-787. 





\bibitem{OTW} W. Ott, M. Tomforde and P.~N. Willis, {\it One-sided shift spaces over infinite alphabets}, New York J. Math., NYJM Monographs 5, (2014), 54 pp.

\bibitem{Pilyugin}
S. Y. Pilyugin. 
\newblock \textit{Shadowing in Dynamical Systems}, Lecture Notes in Math. 1706, Springer-Verlag, (1999).



\bibitem{Sarig} O. M. Sarig, {\it Thermodynamic formalism for countable {M}arkov shifts}, Ergodic Theory Dynam. Systems \textbf{19} (6), (1999), 1565-1593.

\bibitem{Sarig1} O. M. Sarig,
{\it Existence of {G}ibbs measures for countable {M}arkov shifts}, Proc. Amer. Math. Soc. \textbf{131} (6), (2003), 1751-1758.


\bibitem{walters1}
P. Walters,
\newblock \textit{On The Pseudo Orbit Tracing Property and its Relationship to Stability}, Lecture Notes in Math. 668, Springer-Verlag, (1979), 231-244.




\bibitem{Webster} S. B. G. Webster, {\it The path space of a directed graph}, Proc. Amer. Math. Soc. \textbf{142} (1), (2014), 213-225.


\end{thebibliography}
\end{document}